\DeclareMathAlphabet{\mathpzc}{OT1}{pzc}{m}{it}
\numberwithin{equation}{section}
\newcounter{TmpEnumi}
\newcommand{\Number}{section}
\theoremstyle{plain}
\newtheorem{thm}{Theorem}[\Number]
\newtheorem{lem}[thm]{Lemma}
\newtheorem{prp}[thm]{Proposition}
\newtheorem{cor}[thm]{Corollary}
\newtheorem{cnj}[thm]{Conjecture}
\newtheorem{exa}[thm]{Example}
\newtheorem{dfn}[thm]{Definition}
\newcommand{\beq}{\begin{equation}}
\newcommand{\eeq}{\end{equation}}
\newcommand{\beqr}{\begin{eqnarray*}}
\newcommand{\eeqr}{\end{eqnarray*}}
\newcommand{\bal}{\begin{align*}}
\newcommand{\eal}{\end{align*}}
\newcommand{\bei}{\begin{itemize}}
\newcommand{\eei}{\end{itemize}}
\newcommand{\limi}[1]{\lim_{{#1} \to \infty}}
\newcommand{\alignInd}{\hspace{3em} }
\newcommand{\af}{\alpha}
\newcommand{\bt}{\beta}
\newcommand{\dt}{\delta}
\newcommand{\ep}{\varepsilon}
\newcommand{\et}{\eta}
\newcommand{\io}{\iota}
\newcommand{\ld}{\lambda}
\newcommand{\rh}{\rho}
\newcommand{\Z}{{\mathbb{Z}}}
\newcommand{\R}{{\mathbb{R}}}
\newcommand{\C}{{\mathbb{C}}}
\newcommand{\N}{{\mathbb{Z}}_{> 0}}
\newcommand{\Nz}{{\mathbb{Z}}_{\geq 0}}
\newcommand{\subeq}{\preccurlyeq}
\newcommand{\tsr}{{\operatorname{tsr}}}
\newcommand{\RR}{{\operatorname{RR}}}
\newcommand{\dist}{{\operatorname{dist}}}
\newcommand{\sa}{{\operatorname{sa}}}
\newcommand{\spec}{{\operatorname{sp}}}
\newcommand{\card}{{\operatorname{card}}}
\newcommand{\andeqn}{\,\,\,\,\,\, {\mbox{and}} \,\,\,\,\,\,}
\newcommand{\ca}{C*-algebra}
\newcommand{\uca}{unital C*-algebra}
\newcommand{\ct}{continuous}
\newcommand{\pj}{projection}
\newcommand{\fd}{finite dimensional}
\newcommand{\wolog}{without loss of generality}
\newcommand{\Wolog}{Without loss of generality}
\newcommand{\ifo}{if and only if}
\newcommand{\hme}{homeomorphism}
\newcommand{\mh}{minimal homeomorphism}
\newcommand{\cms}{compact metric space}
\newcommand{\cfn}{continuous function}
\newcommand{\rokhLg}{centrally large}
\newcommand{\Rql}{centrally large}
\newcommand{\Cu}{{\operatorname{Cu}}}
\newcommand{\ov}{\overline}
\newcommand{\SM}{\setminus}
\newcommand{\I}{\infty}
\title[Centrally Large Subalgebras and Stable Rank One]{Permanence
 of stable rank one for centrally large subalgebras and
 crossed products by minimal homeomorphisms}
\author{Dawn Archey}
\author{N.~Christopher Phillips}
\date{30~March 2016}   
\address{Department of Mathematics and Software Engineering,
University of Detroit Mercy,
4001 West McNichols Road,
Detroit MI 48221-3038,
USA}
\email[]{archeyde@udmercy.edu}
\address{Department of Mathematics, University of Oregon,
       Eugene OR 97403-1222, USA}
\subjclass[2010]{Primary 46L05;
 Secondary 46L55.}
\thanks{This material is partially based upon work
  of the second author supported by the
  US National Science Foundation under Grants
  DMS-0701076 and DMS-1101742.
  It was also partially supported by the Centre de Recerca
  Matem\`{a}tica (Barcelona) through research visits
  by both authors conducted
  during 2011.}
\begin{document}

\begin{abstract}
We define centrally large subalgebras
of simple unital C*-algebras,
strengthening the definition of large subalgebras
in previous work.
We prove that
if $A$ is any infinite dimensional simple
separable unital C*-algebra which
contains a centrally large subalgebra
with stable rank one,
then $A$ has stable rank one.
We also prove that large subalgebras of crossed product
type are automatically centrally large.
We use these results to prove that if $X$
is a compact metric space which has a surjective
continuous map to the Cantor set,
and $h \colon X \to X$ is a minimal homeomorphism,
then $C^* ({\mathbb{Z}}, X, h)$ has stable rank one,
regardless of the dimension of~$X$
or the mean dimension of~$h$.
In particular,
the Giol-Kerr examples give crossed products with
stable rank one
but which are not stable under tensoring with the Jiang-Su
algebra and are therefore not classifiable in terms of the
Elliott invariant.
\end{abstract}

\maketitle


\setlength{\parindent}{.5in}

\pagenumbering{arabic}
\setcounter{page}{1}

\section{Introduction}\label{Sec:Intro}

\indent
Large and stably large subalgebras
were introduced in~\cite{Ph40}
as an abstraction of Putnam's orbit breaking subalgebra
of the crossed product $C^* (\Z, X, h)$
of the Cantor set by a \mh.
In this paper,
we define a stronger concept,
that of a centrally large subalgebra.
Large orbit breaking subalgebras are also centrally large,
and centrally large subalgebras provide more information about
the containing algebra.
We prove
(Theorem~\ref{tsr1GoesUpExtraLarge})
that if $A$ is a simple unital \ca{}
and $B \subset A$ is a large subalgebra which has stable rank one,
then $A$ also has stable rank one.
If in addition $B$ has real rank zero,
we prove (Theorem~\ref{T_4Y11_RRZ})
that $A$ also has real rank zero.

We use our result to prove the following result
(Theorem~\ref{T_5413_TsrCP})
for crossed products by \mh{s}.
Let $X$ be a compact metric space
which has a surjective continuous map to the Cantor set,
and let $h \colon X \to X$ be a minimal homeomorphism.
Then $C^* ({\mathbb{Z}}, X, h)$ has stable rank one.
This result holds regardless of the dimension of~$X$
or the mean dimension of~$h$.
It is expected that
$C^* ({\mathbb{Z}}, X, h)$ is classifiable in terms of the
Elliott invariant
\ifo{} $h$ has mean dimension zero,
and one direction is the main result of~\cite{EN2}.
Thus our result shows that
crossed products by \mh{s}
can have stable rank one
even when they are not expected to be classifiable.
The examples of~\cite{GK}
confirm that this happens:
they do not have mean dimension zero,
their crossed products
are not stable under tensoring with the Jiang-Su algebra,
but our theorem shows that the crossed products
do have stable rank one.

In~\cite{PhZd},
we will use our result to prove a generalization
of Theorem~\ref{T_5413_TsrCP}.
Let $X$ be a compact metric space with a free minimal action of~$\Z^d$.
Assume $(X, \Z^d)$ has a factor system which
is a free minimal action of~$\Z^d$ on the Cantor set.
Then $C^* ({\mathbb{Z}}, X)$ has stable rank one.
Again, no hypothesis on mean dimension is needed.
We do not include the proof here because the proof of the existence
of the necessary centrally large subalgebra is somewhat involved.
We point out, however,
that the main reason for the abstraction of Putnam's construction
is to be able to handle situations of this sort,
in which there appears to be no explicit construction of the
necessary subalgebra,
only an existence proof.

We give the motivation and background in more detail.
Let $X$ be an infinite compact metric space,
let $h \colon X \to X$ be a homeomorphism,
and let $Y \subset X$ be closed.
Let $u \in C^* ({\mathbb{Z}}, X, h)$ be the standard unitary.
The $Y$-orbit breaking subalgebra
is the subalgebra $C^* ({\mathbb{Z}}, X, h)_Y$
of $C^* ({\mathbb{Z}}, X, h)$
generated by $C (X)$
and all elements $f u$
for $f \in C (X)$ such that $f |_Y = 0$.
(Putnam actually used $u f$ rather than $f u$,
but our choice makes the
relationship with Rokhlin towers less awkward.)
If $h$ is minimal,
$X$ is infinite,
and $Y$ meets each orbit of~$h$ at most once,
then $C^* ({\mathbb{Z}}, X, h)_Y$
is large in $C^* ({\mathbb{Z}}, X, h)$
(Theorem~7.10 of~\cite{Ph40}).

Subalgebras with properties similar to this one
have been used in many other places,
such as the study of crossed products of the Cantor set
by free minimal actions of $\Z^d$
(\cite{Ph10}, but without the name),
the recent proof by Elliott and Niu that if $h \colon X \to X$
is minimal and has mean dimension zero,
then $C^* ({\mathbb{Z}}, X, h)$ is $Z$-stable~\cite{EN2},
and the proof that ``breaking'' the generating unitaries
in an irrational rotation algebra gives an AF~algebra~\cite{EN1}.
See the introduction to~\cite{Ph40}
for a much longer list of applications.

For an infinite dimensional simple unital \ca~$A$
and a stably large subalgebra $B \subset A$,
the following results were proved in~\cite{Ph40}:
\begin{itemize}
\item
$B$ is simple (Proposition~5.2 of~\cite{Ph40})
and infinite dimensional (Proposition~5.5 of~\cite{Ph40}).
\item
If $B$ is stably finite then so is~$A$
(Corollary~6.16 of~\cite{Ph40}),
and if $B$ is purely infinite then so is~$A$
(Proposition~6.17 of~\cite{Ph40}).
\item
The restriction maps
${\operatorname{T}} (A) \to {\operatorname{T}} (B)$
and ${\operatorname{QT}} (A) \to {\operatorname{QT}} (B)$
(on tracial states and quasitraces) are bijective
(Theorem~6.2 of~\cite{Ph40}
and Proposition~6.9 of~\cite{Ph40}).
\item
When $A$ is stably finite,
the inclusion of $B$ in $A$ induces an isomorphism
on the semigroups that remain after deleting
from ${\operatorname{Cu}} (B)$ and ${\operatorname{Cu}} (A)$
all the classes of nonzero projections
(Theorem~6.8 of~\cite{Ph40}).
\item
$B$ and $A$ have the same radius of comparison
(Theorem~6.14 of~\cite{Ph40}).
\end{itemize}
However,
for several further results,
such as ones involving stable rank one and $Z$-stability,
one seems to need a stronger condition:
$B$ must be what we call centrally large in~$A$.

In this paper,
we define centrally large subalgebras.
We prove that a large subalgebra of crossed product type
(Definition~\ref{D-2717CPType} below;
Definition~4.9 of~\cite{Ph40})
is in fact centrally large;
this result covers most of the examples which have appeared
in applications.
We then prove that if $B$ is centrally large in~$A$
and $B$ has stable rank one,
then so does~$A$.
We will consider $Z$-stability elsewhere~\cite{ArPh2}.

For the application to the stable rank of crossed products,
let $h \colon X \to X$ be as above,
and assume that $X$ has a surjective continuous map to the Cantor set.
(The Giol-Kerr examples~\cite{GK}
have this property,
because the have the universal odometer as a factor.)
Then $Y$ as above can be chosen
to be a decreasing intersection of compact open subsets of~$X$.
In this case,
it is known that $C^* ({\mathbb{Z}}, X, h)_Y$
is the direct limit of an AH~system
which has diagonal maps in the sense of~\cite{EHT}.
Moreover (by~\cite{EHT}),
a simple direct limit
of an AH~system with diagonal maps has stable rank one,
even with no assumptions on the dimension growth in the system.

We conjecture,
jointly with Zhuang Niu,
that $C^* ({\mathbb{Z}}, X, h)$ has stable rank one
for a completely arbitrary \mh~$h$
of an infinite compact metric space~$X$.

The original main application was to prove that
of $X$ is a finite dimensional
compact metric space,
and if $\Z^d$ acts freely and minimally on~$X$,
then $C^* (\Z^d, X)$ has stable rank one,
using the large subalgebra of crossed product type
constructed in~\cite{Ph41}.
This particular application has been superseded
before its publication in two different ways.
First, using centrally large subalgebras,
one can in fact prove $Z$-stability,
which by Theorem~6.7 of~\cite{R4} implies stable rank one.
Second,
$Z$-stability (in fact, finite nuclear dimension)
has also been obtained by different
methods in~\cite{Szb}.

The paper is organized as follows.
In Section~\ref{Sec:CuntzComp} we establish
notation and prove some preliminary lemmas
on approximation and Cuntz comparison.
In Section~\ref{Sec:LgDef} we give
the definition of a centrally large
subalgebra and some variations on the theme.
In Section~\ref{Sec:Rq} we prove
that large subalgebras of crossed product type
are centrally large.
In Section~\ref{Sec:MatDec} we prove
some technical lemmas.
In Section~\ref{Sec:RqTsr1} we prove the main theorem
and the result about real rank zero.
Section~7 contains the result
on the stable rank of $C^* ({\mathbb{Z}}, X, h)$
and its application to examples,
including those of Giol and Kerr.

We use the following notational conventions.
If $A$ is a \ca,
then $A_{+}$ denotes the set of positive elements of~$A$.
Also, $A^{+}$ denotes the unitization of~$A$.
(We add a new identity even if $A$ is already unital.)
The notations $M_{\infty} (A)$ and $a \oplus b$
for $a, b \in M_{\infty} (A)$
and $a, b \in K \otimes A$
are described in Section~\ref{Sec:CuntzComp}.
By convention,
if we say that $B$ is a unital subalgebra of a \ca~$A$,
we mean that $B$ contains the identity of~$A$.
Subalgebras of \ca{s}
are always supposed to be C*-subalgebras.
We take $[a, b] = a b - b a$.

We are grateful to Francesc Perera for pointing out
the reference~\cite{RW},
to Zhuang Niu for pointing out the reference~\cite{EHT},
and to George Elliott for pointing
out Corollary~5 of~\cite{CEI}.

\section{Cuntz comparison and approximation lemmas}\label{Sec:CuntzComp}

\indent
We give definitions and notation related to Cuntz comparison,
and one result needed later for which we don't know a reference.
We refer to the list in Lemma~1.4 of~\cite{Ph40},
and the results elsewhere in Section~1 of~\cite{Ph40},
for many more facts.
Also see the survey article~\cite{APT}.
At the end, we prove several approximation lemmas
which will be needed later.

For a \ca~$A$,
let $M_{\infty} (A)$ denote the algebraic direct limit of the
system $(M_n (A))_{n = 1}^{\infty}$
using the usual embeddings $M_n (A) \to M_{n+1} (A)$,
given by
\[
a \mapsto \left( \begin{array}{cc} a & 0 \\ 0 & 0 \end{array} \right).
\]
If $a \in M_m (A)$ and $b \in M_n (A)$,
we write $a \oplus b$ for the diagonal direct sum
\[
a \oplus b
 = \left( \begin{array}{cc} a & 0 \\ 0 & b \end{array} \right).
\]
By abuse of notation,
we will also write $a \oplus b$ when $a, b \in M_{\infty} (A)$
and we do not care about the precise choice of $m$ and $n$
with $a \in M_m (A)$ and $b \in M_n (A)$.
We further choose some isomorphism $M_2 (K) \to K$,
and for $a, b \in K \otimes A$ we use the resulting isomorphism
$M_2 (K \otimes A) \to K \otimes A$ to interpret
$a \oplus b$ as an element of $K \otimes A$.
Up to unitary equivalence which is trivial on~$A$,
the result does not depend on the choice of
the isomorphism $M_2 (K) \to K$.

The following definitions are originally from~\cite{Cz1}.

\begin{dfn}\label{D:CzSGp}
Let $A$ be a \ca, and let $a, b \in (K \otimes A)_{+}$.
\begin{enumerate}
\item\label{D:CzSGp:1}
We say that $a$ is {\emph{Cuntz subequivalent to~$b$ over~$A$}},
written $a \subeq_A b$,
if there is a sequence $(v_n)_{n = 1}^{\infty}$ in $K \otimes A$
such that
\[
\limi{n} v_n b v_n^* = a.
\]
\item\label{D:CzSGp:2}
We say that $a$ and $b$ are {\emph{Cuntz equivalent in~$A$}},
written $a \sim_A b$,
if $a \subeq_A b$ and $b \subeq_A a$.
\end{enumerate}
The relation $\sim_A$ is an equivalence relation.
When there is no possible confusion about the algebra~$A$,
we suppress it in the notation.
\end{dfn}

We will regularly use the following two families of functions.

\begin{dfn}\label{D:MinusEp}
Let $A$ be a \ca,
let $a \in A_{+}$,
and let $\ep > 0$.
We define $(a - \ep)_{+}$ as follows.
Let $f \colon [0, \I) \to [0, \I)$ be the function
\[
f (\lambda)
 = (\lambda - \ep)_{+}
 = \begin{cases}
     0               & \hspace{3em}  0 \leq \lambda \leq \ep
\\
     \lambda - \ep   & \hspace{3em}  \ep < \lambda.
    \end{cases}
\]
Then define $(a - \ep)_{+} = f (a)$
(using \ct{} functional calculus).
\end{dfn}

\begin{dfn}\label{fSubEp}
For $\ep > 0$,
define a \cfn{} $f_{\ep} \colon [0, \I) \to [0, 1]$
by
\[
f_{\ep} (\ld)
 = \begin{cases}
   0                   & \hspace{3em} 0 \leq \ld \leq \ep
\\
   \ep^{- 1} (\ld - \ep) & \hspace{3em} \ep \leq \ld \leq 2 \ep
\\
   1                   & \hspace{3em} 2 \ep \leq \ld.
\end{cases}
\]
\end{dfn}

\begin{lem}\label{Under2Ways}
Let $A$ be a simple unital \ca,
and let $a, b \in A_{+}$ with $\| a \| = \| b \| = 1$.
Then there exists
$c \in A_{+} \setminus \{ 0 \}$ with $\| c \| \leq 1$
such that $c \leq a$ and $c \subeq b$.
\end{lem}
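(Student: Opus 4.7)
The plan is to decompose $a$ as a sum of positive pieces, each lying Cuntz below~$b$, and then take~$c$ to be any nonzero piece in the decomposition. To produce such a decomposition, I will invoke the standard consequence of simplicity that in a simple unital \ca{}, any nonzero positive element~$b$ is full, and consequently there exist $v_1, \ldots, v_n \in A$ with
\[
\sum_{i=1}^n v_i^* b v_i \;=\; 1.
\]
(One first approximates~$1$ by an invertible positive element of the form $\sum x_i^* b x_i$, available from fullness of~$b$, and then conjugates by the inverse square root to obtain the exact equality.)

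Conjugating both sides of this identity by~$a^{1/2}$ yields
\[
a \;=\; a^{1/2} \Bigl( \sum_{i=1}^n v_i^* b v_i \Bigr) a^{1/2}
    \;=\; \sum_{i=1}^n (v_i a^{1/2})^* \, b \, (v_i a^{1/2}).
\]
Each summand $c_i := (v_i a^{1/2})^* b (v_i a^{1/2})$ is a positive element, so, since the sum equals~$a$ and all other summands are nonnegative, $c_i \leq a$ holds for every~$i$. Because $a \neq 0$, at least one summand is nonzero, and I let $c$ be any such choice, say $c = (v_{i_0} a^{1/2})^* b (v_{i_0} a^{1/2})$.

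All four required conclusions are then immediate. The inequality $c \leq a$ holds by construction; the bound $\|c\| \leq 1$ follows from $c \leq a$ together with the hypothesis $\|a\| = 1$; one has $c \neq 0$ by the choice of $i_0$; and $c$ has the form $x^* b x$ with $x = v_{i_0} a^{1/2}$, which gives $c \subeq b$ by one of the standard items in the list of basic Cuntz-comparison properties (Lemma~1.4 of~\cite{Ph40}). The only nonroutine ingredient is the citation of $\sum v_i^* b v_i = 1$; once that is taken as known, no further obstacle is expected.
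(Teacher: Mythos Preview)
Your argument is correct and takes a genuinely different route from the paper's. The paper invokes Proposition~1.8 of~\cite{Cz} to produce a nonzero $y$ with $y y^* \in \overline{a A a}$ and $y^* y \in \overline{b A b}$, and then sets $c = a^{1/2} y y^* a^{1/2}$, so that $c \leq a$ and $c \sim y^* a y \leq y^* y \in \overline{b A b}$. You instead use only the elementary consequence of simplicity that $1$ can be written as a finite sum $\sum v_i^* b v_i$, and then conjugate by $a^{1/2}$ to decompose~$a$ into pieces of the form $x^* b x$. Your approach avoids the specific structural lemma from~\cite{Cz} in favor of a more widely quoted fact, and it gives slightly more: a decomposition of~$a$ itself into pieces each Cuntz-below~$b$, rather than a single such piece. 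The paper's approach, on the other hand, is a bit more direct once Cuntz's lemma is in hand and does not require the small argument with invertibility to pass from an approximate identity decomposition to an exact one.
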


\begin{proof}
Since $A$ is simple and $a, b \in A$ are nonzero,
by Proposition 1.8 of~\cite{Cz} there is a nonzero
$y \in A$ such that $yy^{*} \in {\overline{aAa}}$ and
$y^{*} y \in {\overline{bAb}}$.
Without loss of generality we may assume that $\| y \| \leq 1$,
and so
$yy^{*} \leq 1$.
Set $z = y^{*} a^{1/2}$
and $c = z^* z$.
Then $c = a^{1/2} y y^{*} a^{1/2} \leq a$.
Also,
the discussion after Definition~2.3 of~\cite{KR}
gives the first step in the calculation
$c \sim z z^* = y^* a y \leq y y^* \in {\overline{b A b}}$,
so $c \subeq b$ by Proposition 2.7(i) of~\cite{KR}.
\end{proof} 

We finish this section with several approximation lemmas.
The first is actually a special case of Lemma 2.5.11(1)
of~\cite{LnBook},
but the proof there is a bit sloppy.

\begin{lem}\label{FuncCalcCom}
Suppose $f \colon [0, 1] \to \C$ is continuous.
Then for every $\ep > 0$
there exists $\delta > 0$
such that whenever $D$ is a \ca{}
and $x, z \in D$
satisfy
\[
0 \leq x \leq 1,
\,\,\,\,\,\,
\| z \| \leq 1,
\andeqn
\| [x, z] \| < \delta,
\]
then $\| [f (x), z ] \| < \ep$.
\end{lem}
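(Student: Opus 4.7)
The plan is to reduce to the case of polynomials via Stone--Weierstrass, and then handle polynomials by the standard commutator expansion. Since $0 \le x \le 1$ is self-adjoint, the continuous functional calculus on $[0,1]$ applies, and polynomials in $x$ are dense (in norm) in $C^*(x)^+$.

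First I would establish the polynomial case. For a monomial $t^n$, write
\[
[x^n, z] = \sum_{k=0}^{n-1} x^{k} [x, z] x^{n-1-k},
\]
so that $\|[x^n, z]\| \le n \|[x, z]\|$, using $\|x\| \le 1$. By linearity, for any polynomial $p(t) = \sum_{k=0}^{N} a_k t^k$,
\[
\|[p(x), z]\| \le \biggl(\sum_{k=1}^{N} k |a_k|\biggr) \|[x, z]\|.
\]
Call the coefficient in parentheses $C(p)$; it depends only on $p$.

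Next I would approximate $f$ uniformly. Given $\ep > 0$, by the Weierstrass approximation theorem (applied to the real and imaginary parts, or directly to complex polynomials) pick a polynomial $p$ with $\|f - p\|_{\infty, [0,1]} < \ep/4$. Then, since $0 \le x \le 1$ implies $\spec(x) \subset [0,1]$, the functional calculus gives $\|f(x) - p(x)\| \le \|f - p\|_{\infty, [0,1]} < \ep/4$. Using $\|z\| \le 1$, we bound
\[
\|[f(x) - p(x), z]\| \le 2 \|f(x) - p(x)\| \|z\| < \ep/2.
\]
Now choose $\delta = \ep / (2 C(p) + 1)$ (or any $\delta < \ep / (2 C(p))$ if $C(p) > 0$, with the trivial case $C(p) = 0$ handled separately). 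Then $\|[x, z]\| < \delta$ forces $\|[p(x), z]\| < \ep/2$, and the triangle inequality gives $\|[f(x), z]\| < \ep$, as desired.

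The main potential pitfall is simply being careful that the choice of $\delta$ depends only on $f$ and $\ep$, not on the algebra $D$ or the elements $x, z$. Since $p$ and $C(p)$ are fixed once $\ep$ and $f$ are specified, this is automatic. No step is genuinely hard; the lemma is a routine functional calculus estimate whose only content is the commutator expansion for $x^n$ combined with density of polynomials.
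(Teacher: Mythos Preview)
Your proof is correct and follows essentially the same approach as the paper's own proof: both use the commutator expansion $[x^n,z]=\sum_{k=0}^{n-1}x^k[x,z]x^{n-1-k}$ to bound $\|[x^n,z]\|\le n\|[x,z]\|$, approximate $f$ uniformly by a polynomial via Weierstrass, and combine the two estimates with the triangle inequality. The only differences are cosmetic (the paper uses $\ep/3$ where you use $\ep/4$, and slightly different bookkeeping in the choice of~$\delta$).
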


\begin{proof} 
We first observe that if $\| x \| \leq 1$ and $\| z \| \leq 1$,
then for $n \in \N$ we have
\begin{equation}\label{Eq_4920_CommEst}
\| [ x^n, z ] \|
  \leq \sum_{k = 1}^n
    \| x^{n - k} \| \cdot \| [x, z] \| \cdot \| x^{k - 1} \|
  \leq n \| [ x, z ] \|.
\end{equation}

Choose $n \in \Nz$
and a polynomial $g (\ld) = \sum_{k = 0}^{n} \af_k \ld^k$
with coefficients $\af_k \in \C$
for $k = 0, 1, \ldots, n$
such that
$| g (\ld) - f (\ld) | < \tfrac{\ep}{3}$
for all $\ld \in [0, 1]$.
Define
\[
\dt = \frac{\ep}{1 + 3 \sum_{k = 1}^{n} k |\af_k|}.
\]
Suppose $\| [x, z] \| < \dt$.
Then, since $\| z \| \leq 1$
and using~(\ref{Eq_4920_CommEst})
at the second step,
\[
\| [f (x), z ] \|
  \leq 2 \| f (x) - g (x) \|
    + \sum_{k = 1}^{n} |\af_k|
             \cdot \| [ x^k, z ] \|
  < 2 \left( \frac{\ep}{3} \right)
         + \sum_{k = 1}^{n} k |\af_k| \dt
  < \ep.
\]
This completes the proof.
\end{proof}

\begin{lem}\label{L_4917_Near}
Suppose $f \colon [0, 1] \to \C$ is continuous
and $f (0) = 0$.
Then for every $\ep > 0$
there exists $\delta > 0$
such that whenever $D$ is a \ca,
$E \subset D$ is a subalgebra,
and $a \in D$ and $b \in E$
satisfy
\[
\| a \| \leq 1,
\,\,\,\,\,\,
0 \leq b \leq 1,
\andeqn
\dist (b a, E) < \delta,
\]
then $\dist (f (b) a, \, E) < \ep$.
\end{lem}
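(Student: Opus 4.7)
The plan is to reduce to the polynomial case via Weierstrass and then exploit the hypothesis $f(0) = 0$ so that the approximating polynomial has no constant term.

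First, I would use Weierstrass to pick a polynomial $g(\ld) = \sum_{k = 1}^{n} \af_k \ld^k$ (with $g(0) = 0$, which is possible because $f (0) = 0$ and one can simply subtract off a constant from any polynomial approximation of $f$ at the cost of doubling the error) such that $|g(\ld) - f(\ld)| < \ep/2$ for all $\ld \in [0,1]$. Since $0 \leq b \leq 1$ and $\|a\| \leq 1$, this yields $\|f(b)a - g(b)a\| < \ep/2$ regardless of~$\dt$. So it suffices to bound $\dist(g(b)a, E)$.

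The key observation is that for each $k \geq 1$, the element $b^k a$ is close to $E$ whenever $ba$ is. Indeed, if $y \in E$ with $\|ba - y\| < \dt$, then $b^{k-1} y \in E$ (since $b \in E$ and $E$ is a subalgebra), and
\[
\| b^k a - b^{k - 1} y \|
   \leq \| b^{k - 1} \| \cdot \| b a - y \|
   < \dt,
\]
using $\| b \| \leq 1$. Hence $\dist (b^k a, \, E) < \dt$ for every $k \geq 1$ — this is where $f(0) = 0$ is essential, since it eliminates the $k = 0$ term for which $b^0 a = a$ need not be near~$E$ at all.

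Putting the pieces together via the triangle inequality,
\[
\dist (g (b) a, \, E)
  \leq \sum_{k = 1}^{n} |\af_k| \dist (b^k a, \, E)
  < \dt \sum_{k = 1}^{n} |\af_k|,
\]
so $\dist (f (b) a, \, E) < \ep/2 + \dt \sum_{k=1}^n |\af_k|$. Choosing
\[
\dt = \frac{\ep}{2 \bigl( 1 + \sum_{k = 1}^{n} |\af_k| \bigr)}
\]
makes this less than~$\ep$, completing the proof. There is no real obstacle here; the only subtle point is remembering to use $f(0) = 0$ to force a polynomial approximation without a constant term, so that all the commutator-free estimates take the form $b^{k-1}(ba)$ with a factor of $b$ available to absorb into~$E$.
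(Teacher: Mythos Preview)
Your proof is correct and is essentially identical to the paper's: both approximate $f$ by a polynomial with no constant term, write $b^k a = b^{k-1}(ba)$, and take the same $\delta = \ep / \bigl(2(1 + \sum_k |\af_k|)\bigr)$. The paper even constructs the explicit element $z = \sum_{k=1}^n \af_k b^{k-1} y$ you implicitly use.
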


\begin{proof}
Choose $n \in \Nz$
and a polynomial $g (\ld) = \sum_{k = 0}^{n} \af_k \ld^k$
with $\af_k \in \C$
for $k = 0, 1, \ldots, n$
such that
$\af_0 = 0$
and
$| g (\ld) - f (\ld) | < \tfrac{\ep}{3}$
for all $\ld \in [0, 1]$.
Define
\[
\dt
 = \frac{\ep}{2 \big( 1 + |\af_1| + |\af_2| + \cdots + |\af_n| \big)}.
\]
Let $a \in D$ and $b \in E$ be as in the hypotheses.
Choose $y \in E$ such that $\| b a - y \| < \dt$.
Set $z = \sum_{k = 1}^n \af_k b^{k - 1} y \in E$.
Then
\[
\| f (b) a - z \|
  \leq \| f (b) - g (b) \| \cdot \| a \|
    + \sum_{k = 1}^{n} |\af_k|
             \cdot \| b^{k - 1} \| \cdot \| b a - y \|
  < \frac{\ep}{2} + \sum_{k = 1}^{n} |\af_k| \dt
  < \ep.
\]
This completes the proof.
\end{proof}

The following corollary is also easy to prove directly.

\begin{cor}\label{C_4926_InSub}
Let $D$ be a \ca,
let $E \subset D$ be a subalgebra,
let $a \in D$,
let $b \in E_{+}$,
and suppose that $b a \in E$.
Let $f \colon \spec (b) \to \C$ be a continuous function
such that $f (0) = 0$.
Then $f (b) a \in E$.
\end{cor}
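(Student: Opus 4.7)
The plan is to give a direct polynomial approximation argument, rather than invoking \Lem{L_4917_Near} (though that route also works, since the hypothesis $b a \in E$ means $\dist (b a, E) = 0 < \dt$ for every $\dt > 0$, so the lemma yields $\dist (f (b) a, \, E) < \ep$ for every $\ep > 0$, hence $f (b) a \in E$ because $E$ is closed).

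First I would use Stone--Weierstrass, together with the hypothesis $f (0) = 0$, to approximate $f$ uniformly on $\spec (b)$ by polynomials without constant term: there exist polynomials
\[
g_n (\ld) = \sum_{k = 1}^{N_n} \af_k^{(n)} \ld^k
\]
such that $g_n \to f$ uniformly on $\spec (b)$. (The fact that $f (0) = 0$ is exactly what lets us drop the constant term; without it the argument below fails at $k = 0$.)

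Next I would exploit the algebraic identity
\[
g_n (b) a
 = \sum_{k = 1}^{N_n} \af_k^{(n)} b^k a
 = \sum_{k = 1}^{N_n} \af_k^{(n)} b^{k - 1} (b a).
\]
This is the key trick: by peeling off one factor of $b$ from each monomial, we reduce to the known containment $b a \in E$. Since $b \in E$ and $E$ is a subalgebra, each $b^{k - 1}$ lies in $E$, and since $b a \in E$, each summand lies in $E$. Thus $g_n (b) a \in E$ for every~$n$.

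Finally, continuous functional calculus gives $\| g_n (b) - f (b) \| \to 0$, so $g_n (b) a \to f (b) a$ in norm. Since our subalgebras are by convention C*-subalgebras, $E$ is norm closed, and therefore $f (b) a \in E$. There is no real obstacle here; the only substantive point is the algebraic rewriting $b^k a = b^{k - 1} (b a)$, which is precisely why the constant-term-free hypothesis on $f$ cannot be dropped.
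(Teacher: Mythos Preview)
Your proposal is correct. In fact, the parenthetical route you mention---scaling so that $\| b \| \leq 1$, extending $f$ to $[0,1]$, and then applying \Lem{L_4917_Near} with $\dist(ba,E)=0$ to conclude $\dist(f(b)a,E)<\ep$ for every $\ep>0$---is exactly the paper's proof. Your direct polynomial argument is simply the content of the proof of \Lem{L_4917_Near} specialized to this exact case, so the two approaches are essentially the same; the only minor point worth tidying is that for $k=1$ the factor $b^{0}=1$ need not lie in~$E$, but the summand $b^{0}(ba)=ba$ is in $E$ by hypothesis, so the conclusion stands.
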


\begin{proof}
By scaling,
\wolog{} $\| b \| \leq 1$.
Then we may extend $f$ to be a
\cfn{} $f \colon [0, 1] \to \C$.
Lemma~\ref{L_4917_Near} now shows that
for all $\ep > 0$, we have $\dist (f (b) a, \, E) < \ep$.
\end{proof}

\section{Definitions and preliminary lemmas}\label{Sec:LgDef}

\indent
In this section,
we recall the definition of a large subalgebra,
and give the definition of a centrally large subalgebra.
We then give some convenient variants of the definition.

Recall that a unital subalgebra of a \uca~$A$
is, by convention, assumed to contain the identity of~$A$.

The definitions of ``large'' and ``stably large''
in Definition~\ref{D_6213_Large}
originally appeared in
Definition~4.1 and Definition~5.1 of~\cite{Ph40}.
The definitions of ``centrally large'' and ``stably centrally large''
differ by the addition of an approximate commutation condition.

\begin{dfn}\label{D_6213_Large}
Let $A$ be an in\fd{} simple unital \ca.
A unital subalgebra $B \subset A$ is said to be
{\emph{large}} in~$A$ if
for every $m \in \N$,
$a_1, a_2, \ldots, a_m \in A$,
$\ep > 0$, $x \in A_{+}$ with $\| x \| = 1$,
and $y \in B_{+} \SM \{ 0 \}$,
there are $c_1, c_2, \ldots, c_m \in A$ and $g \in B$
such that:
\begin{enumerate}
\item\label{D_Large_Cut1}
$0 \leq g \leq 1$.
\item\label{D_Large_Cut2}
For $j = 1, 2, \ldots, m$ we have
$\| c_j - a_j \| < \ep$.
\item\label{D_Large_Cut3}
For $j = 1, 2, \ldots, m$ we have
$(1 - g) c_j \in B$.
\item\label{D_Large_Cut4}
$g \subeq_B y$ and $g \subeq_A x$.
\item\label{D_Large_Cut5}
$\| (1 - g) x (1 - g) \| > 1 - \ep$.
\setcounter{TmpEnumi}{\value{enumi}}
\end{enumerate}
We say that
$B$ is {\emph{centrally large}} in~$A$ if we can require that
in addition:
\begin{enumerate}
\setcounter{enumi}{\value{TmpEnumi}}
\item\label{D_Large_Cut6}
For $j = 1, 2, \ldots, m$ we have
$\| g a_j - a_j g \| < \ep$.
\end{enumerate}
We further say that
$B$ is {\emph{stably large}} in~$A$ if
$M_n (B)$ is large in $M_n (A)$ for all $n \in \Nz$,
and {\emph{stably centrally large}} in~$A$ if
$M_n (B)$ is centrally large in $M_n (A)$ for all $n \in \Nz$.
\end{dfn}

We emphasize that one of the Cuntz subequivalences
in~(\ref{D_Large_Cut4})
is relative to~$B$, not to~$A$.

By Proposition~4.5 of~\cite{Ph40},
when $A$ is finite
we can omit the condition involving $\| (1 - g) x (1 - g) \|$
in the definition of a large subalgebra.
The following proposition
gives the same result for \Rql{} subalgebras.

\begin{prp}\label{P-2729AltRq}
Let $A$ be an in\fd{} simple unital \ca,
and let $B \subset A$ be a unital subalgebra.
Assume that $A$ is finite and that
for every $m \in \N$,
$a_1, a_2, \ldots, a_m \in A$,
$\ep > 0$,
and $y \in B_{+} \setminus \{ 0 \}$,
there are $c_1, c_2, \ldots, c_m \in A$ and $g \in B$
such that:
\begin{enumerate}
\item\label{P-Rq:AltCut1}
$0 \leq g \leq 1$.
\item\label{P-Rq:AltCut2}
For $j = 1, 2, \ldots, m$ we have
$\| c_j - a_j \| < \ep$.
\item\label{P-Rq:AltCut3}
For $j = 1, 2, \ldots, m$ we have
$(1 - g) c_j \in B$.
\item\label{P-Rq:AltCut4}
$g \subeq_B y$ and $g \subeq_A x$.
\item\label{P-Rq:AltCut5}
For $j = 1, 2, \ldots, m$ we have
$\| g a_j - a_j g \| < \ep$.
\end{enumerate}
Then $B$ is a \Rql{} subalgebra of~$A$.
\end{prp}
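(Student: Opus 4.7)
The plan is to derive condition~(\ref{D_Large_Cut5}) of Definition~\ref{D_6213_Large} from the weaker hypothesis by applying the hypothesis with $x$ itself adjoined to the list $a_1, \ldots, a_m$ (so that condition~(\ref{P-Rq:AltCut5}) of the hypothesis also gives $\| g x - x g \| < \ep'$) and with an auxiliary element obtained by continuous functional calculus on $x$ inserted into the Cuntz subequivalence condition~(\ref{P-Rq:AltCut4}). This parallels the proof of Proposition~4.5 of~\cite{Ph40}, which effects the same reduction for (non-central) large subalgebras, with additional bookkeeping to carry along the approximate commutation.

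Concretely, given data $m$, $a_1, \ldots, a_m$, $\ep$, $x$ with $\| x \| = 1$, and $y$ for the definition of centrally large, I would choose $\ep' \ll \ep$ together with continuous functions $\ph, \ps \colon [0, 1] \to [0, 1]$ satisfying $\ph \ps = 0$, where $\ps$ is supported near $1$ (so that $\| \ps (x) \| = 1$) and $\ph$ vanishes near $1$. In the degenerate case $\ph (x) = 0$, the spectrum of $x$ is concentrated near $1$, so $x$ is close to $1$ and condition~(\ref{D_Large_Cut5}) becomes easy after replacing $g$ by $h (g)$ for a suitable continuous $h$ with $h (0) = 0$ and $h (1) = 1$; this preserves condition~(\ref{D_Large_Cut3}) by Corollary~\ref{C_4926_InSub}. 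In the main case $\ph (x) \neq 0$, I would apply the hypothesis to the augmented list $(a_1, \ldots, a_m, x)$ with $\ph (x)$ in place of $x$, with $y$ as given, and with tolerance $\ep'$ chosen small enough (using Lemma~\ref{FuncCalcCom}) that approximate commutation of $g$ with $x$ transfers to approximate commutation with $\ps (x)$. The resulting $g$ together with $c_1, \ldots, c_m$ then satisfies conditions (\ref{D_Large_Cut1})--(\ref{D_Large_Cut4}) and (\ref{D_Large_Cut6}) of Definition~\ref{D_6213_Large} automatically.

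It remains to verify the norm bound~(\ref{D_Large_Cut5}). The plan is to first establish that $\| \ps (x) g \ps (x) \|$ is small (say less than $\ep / 4$), and then combine with the commutation estimates to bound $\| (1 - g) x (1 - g) \| \geq \| \ps (x) (1 - g) x (1 - g) \ps (x) \|$ from below by something close to $\| \ps (x) x \ps (x) \| \geq 1 - \ep / 4$. The main obstacle is the estimate on $\| \ps (x) g \ps (x) \|$: Cuntz subequivalence $g \subeq_A \ph (x)$ does not by itself imply any norm bound, and a Cuntz witness $v$ with $v \ph (x) v^* \approx g$ need not be controlled by $\ps (x)$. The resolution I would pursue uses the approximate commutation of $g$ with $x$, hence with $\ps (x)$ by Lemma~\ref{FuncCalcCom}, to show that $g$ is approximately contained in the hereditary subalgebra $\ov{\ph (x) A \ph (x)}$, which is orthogonal to $\ps (x)$ by the choice $\ph \ps = 0$. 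The careful verification of this hereditary localization parallels the treatment of the corresponding step in the proof of Proposition~4.5 of~\cite{Ph40}, and is the main technical content of the argument.
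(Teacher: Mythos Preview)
Your proposal has a genuine gap at precisely the step you flag as the ``main obstacle.'' The claim that approximate commutation of $g$ with $x$ (hence with $\ps(x)$), together with $g \subeq_A \ph(x)$, forces $g$ to lie approximately in $\overline{\ph(x) A \ph(x)}$ is false. Cuntz subequivalence constrains only the ``size'' of $g$ in the Cuntz semigroup, not its location, and commutation with $x$ does not repair this: if $\ps(x) \subeq_A \ph(x)$ (which can certainly occur in a simple infinite-dimensional algebra for suitable~$x$), then $g = \ps(x)$ satisfies $0 \leq g \leq 1$, $g \subeq_A \ph(x)$, and $[g, x] = 0$ exactly, yet $\ps(x) g \ps(x) = \ps(x)^3$ has norm~$1$. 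Your appeal to ``the corresponding step in the proof of Proposition~4.5 of~\cite{Ph40}'' is also misplaced: that proposition has no approximate commutation hypothesis available, so its argument cannot run the way you describe.

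The actual mechanism, and the reason the finiteness hypothesis is essential, is Lemma~2.9 of~\cite{Ph40}: for $A$ finite, simple, and infinite dimensional, given $x \in A_{+}$ with $\| x \| = 1$ and $\ep > 0$ there exists $y_0 \in (\overline{x A x})_{+} \setminus \{ 0 \}$ such that $0 \leq g \leq 1$ and $g \subeq_A y_0$ already imply $\| (1 - g) x (1 - g) \| > 1 - \ep$. One applies the hypothesis with (a normalization of) $y_0$ in place of~$x$; the resulting $g$ then satisfies $g \subeq_A y_0 \subeq_A x$ and condition~(\ref{D_Large_Cut5}) automatically, while condition~(\ref{D_Large_Cut6}) simply comes along for the ride. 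This is exactly the proof of Proposition~4.5 of~\cite{Ph40}, with no extra work needed for the central case---which is why the paper just cites that proof.
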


\begin{proof}
The proof is the same
as that of Proposition~4.5 of~\cite{Ph40}.
\end{proof}

The following lemma states that the elements $c_j$
in the definition of a \Rql{} subalgebra
can be chosen with norm no larger than the norm of corresponding $a_j$.

\begin{lem}\label{L:LargeDecNorm}
Suppose $B \subset A$ is a \Rql{} subalgebra of $A$.
The elements
$c_1, c_2, \ldots, c_m$ may be chosen so that
$\| c_j \| \leq \| a_j \|$ for $j = 1, 2, \ldots, m$.
\end{lem}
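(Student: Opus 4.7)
The plan is to deduce the norm bound from the definition by a simple scalar rescaling, after invoking the centrally large property with a halved tolerance. Because $B$ is automatically closed under scalar multiplication, multiplying each $c_j$ by an appropriate scalar in $[0,1]$ preserves condition~(\ref{D_Large_Cut3}) while shrinking $\|c_j\|$ to exactly $\|a_j\|$, and the error introduced is already bounded by $\|c_j - a_j\|$ via the reverse triangle inequality.

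First, apply the definition of centrally large to the data $m$, $a_1,\ldots,a_m$, $x$, $y$, but with $\varepsilon$ replaced by $\varepsilon/2$, producing $c_1,\ldots,c_m \in A$ and $g \in B$ satisfying (\ref{D_Large_Cut1})–(\ref{D_Large_Cut6}) with tolerance $\varepsilon/2$. In particular, the reverse triangle inequality gives $\big|\|c_j\| - \|a_j\|\big| \leq \|c_j - a_j\| < \varepsilon/2$.

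Next, for each $j$ set
\[
\lambda_j \;=\; \min\!\Bigl\{\,1,\ \tfrac{\|a_j\|}{\|c_j\|}\,\Bigr\} \in [0,1],
\]
with the convention $\lambda_j = 1$ when $c_j = 0$, and replace $c_j$ by $c_j' = \lambda_j c_j$. Then $\|c_j'\| \leq \|a_j\|$ by construction. Conditions (\ref{D_Large_Cut1}), (\ref{D_Large_Cut4}), (\ref{D_Large_Cut5}), and (\ref{D_Large_Cut6}) do not involve the $c_j$'s and continue to hold (in fact with the stronger tolerance $\varepsilon/2$). Condition (\ref{D_Large_Cut3}) is preserved because $(1-g)c_j' = \lambda_j (1-g)c_j$ is a scalar multiple of an element of $B$, hence in $B$. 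For condition~(\ref{D_Large_Cut2}), when $\lambda_j = 1$ there is nothing to check; when $\lambda_j < 1$, a short estimate gives $\|c_j' - c_j\| = (1-\lambda_j)\|c_j\| = \|c_j\| - \|a_j\| < \varepsilon/2$, so $\|c_j' - a_j\| \leq \|c_j' - c_j\| + \|c_j - a_j\| < \varepsilon$.

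There is essentially no obstacle here; the only point requiring attention is the coordinated choice of tolerance $\varepsilon/2$ at the outset so that the extra error introduced by the scalar rescaling can be absorbed. The compatibility with condition~(\ref{D_Large_Cut3}) is immediate from the fact that $B$ is a subalgebra, and the approximate-commutation condition~(\ref{D_Large_Cut6}) is untouched because it only involves $g$ and the original elements $a_j$.
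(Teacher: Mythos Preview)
Your argument is correct and is precisely the standard rescaling trick; the paper does not spell out a proof here but simply refers to Lemma~4.7 of~\cite{Ph40}, which uses the same idea. There is nothing to add.
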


\begin{proof}
The proof is the same as the proof of Lemma~4.7 in~\cite{Ph40}.
\end{proof}

If we cut down on both sides instead of
on one side,
and the elements $a_j$ are positive,
then we may take the elements $c_j$ to be positive.
We can
still choose $c_j$ with
norm no larger than that of~$a_j$.

\begin{lem}\label{L:LargeStaysPositive}
Let $A$ be an in\fd{} simple unital \ca.
Suppose $B \subset A$ is a centrally large subalgebra.
Let $m, n \in \Nz$,
let $a_1, a_2, \ldots, a_m \in A$,
let $b_1, b_2, \ldots, b_n \in A_{+}$,
let $\ep > 0$,
let $x \in A_{+}$ satisfy $\| x \| = 1$,
and let $y \in B_{+} \setminus \{ 0 \}$.
Then there are
$c_1, c_2, \ldots, c_m \in A$,
$d_1, d_2, \ldots, d_n \in A_{+}$,
and $g \in B$
such that:
\begin{enumerate}
\item\label{L:LargeStaysPositive:Cut1}
$0 \leq g \leq 1$.
\item\label{L:LargeStaysPositive:Cut2a}
For $j = 1, 2, \ldots, m$ we have
$\| c_j - a_j \| < \ep$,
and for $j = 1, 2, \ldots, n$ we have
$\| d_j - b_j \| < \ep$.
\item\label{L:LargeStaysPositive:Cut2b}
For $j = 1, 2, \ldots, m$ we have
$\| c_j \| \leq \| a_j \|$,
and for $j = 1, 2, \ldots, n$ we have
$\| d_j \| \leq \| b_j \|$.
\item\label{L:LargeStaysPositive:Cut3}
For $j = 1, 2, \ldots, m$ we have
$(1 - g) c_j \in B$,
and for $j = 1, 2, \ldots, n$ we have
$(1 - g) d_j (1 - g) \in B$.
\item\label{L:LargeStaysPositive:Cut4}
$g \subeq_B y$ and $g \subeq_A x$.
\item\label{L:LargeStaysPositive:Cut5}
$\| (1 - g) x (1 - g) \| > 1 - \ep$.
\item\label{L:LargeStaysPositive:Cut6}
$\| g a_j - a_j g \| < \ep$ for $j = 1, 2, \ldots, m$
and $\| g b_j - b_j g \| < \ep$ for $j = 1, 2, \ldots, n$.
\end{enumerate}
\end{lem}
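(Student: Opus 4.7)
The plan is to apply the definition of centrally large once, to the combined list $a_1,\ldots,a_m,b_1,\ldots,b_n$, with a smaller tolerance $\ep'<\ep/3$, and then adjust the approximants of the positive elements $b_j$ to enforce positivity, the norm bound, and the two-sided cut condition.

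First I would invoke Definition~\ref{D_6213_Large} with the list $(a_1,\ldots,a_m,b_1,\ldots,b_n)$, the given $x$ and $y$, and tolerance $\ep'$; by Lemma~\ref{L:LargeDecNorm} I may also arrange $\|\tilde c_j\|\le\|a_j\|$ for the approximants $\tilde c_j$ of $a_j$ and $\|\tilde d_j\|\le\|b_j\|$ for the approximants $\tilde d_j$ of $b_j$, together with some $g\in B$ with $0\le g\le 1$ that satisfies conditions (\ref{D_Large_Cut3})--(\ref{D_Large_Cut6}) of Definition~\ref{D_6213_Large}. Setting $c_j=\tilde c_j$ immediately supplies all parts of conditions (\ref{L:LargeStaysPositive:Cut1})--(\ref{L:LargeStaysPositive:Cut6}) that involve the $c_j$.

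For each $j\in\{1,\ldots,n\}$ I then pass to the self-adjoint part $\bar d_j=(\tilde d_j+\tilde d_j^*)/2$, noting that $\|\bar d_j\|\le\|b_j\|$ and $\|\bar d_j-b_j\|<\ep'$, so that the negative part obeys $\|(\bar d_j)_-\|<\ep'$. I would then define
\[
d_j=\alpha_j\bigl(\bar d_j+\ep'\cdot 1_A\bigr),\qquad \alpha_j=\min\bigl(1,\,\|b_j\|/\|\bar d_j+\ep'\cdot 1_A\|\bigr)
\]
(with $d_j=0$ if $b_j=0$). Since $\bar d_j\ge -\ep'\cdot 1_A$, the element $\bar d_j+\ep'\cdot 1_A$ is positive, so $d_j\in A_+$. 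A short triangle-inequality calculation gives $\|d_j\|\le\|b_j\|$ and $\|d_j-b_j\|<3\ep'<\ep$. The key membership $(1-g)\,d_j\,(1-g)\in B$ follows from the identity
\[
(1-g)\,d_j\,(1-g)=\alpha_j\,(1-g)\bar d_j(1-g)+\alpha_j\ep'\,(1-g)^2,
\]
together with the observation that $(1-g)\bar d_j(1-g)\in B$: the product $(1-g)\tilde d_j(1-g)$ lies in $B$ because $(1-g)\tilde d_j\in B$ and $(1-g)\in B$, and averaging with its adjoint (which is also in $B$, as $B$ is a $*$-subalgebra) gives the self-adjoint version. The commutator bound in (\ref{L:LargeStaysPositive:Cut6}) for the $b_j$ is precisely condition (\ref{D_Large_Cut6}) of Definition~\ref{D_6213_Large} applied to the $b_j$, and conditions (\ref{L:LargeStaysPositive:Cut4}), (\ref{L:LargeStaysPositive:Cut5}) are inherited directly from the application.

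The main obstacle is the passage from the one-sided cut $(1-g)\tilde d_j\in B$ supplied by Definition~\ref{D_6213_Large} to a two-sided cut $(1-g)d_j(1-g)\in B$, while keeping $d_j\ge 0$ and $\|d_j\|\le\|b_j\|$. The shift by $\ep'\cdot 1_A$ (which lies in $B$ because $B$ is unital) swallows the small negative part of $\bar d_j$ without leaving $B$, and the rescaling by $\alpha_j\le 1$ enforces the norm bound at the cost of only another $\ep'$-worth of error in the approximation of $b_j$.
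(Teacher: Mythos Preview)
Your argument is correct. The paper itself does not give an explicit proof here; it simply says the proof is essentially the same as that of Lemma~4.8 in~\cite{Ph40}. Your route---apply Definition~\ref{D_6213_Large} (with the norm control of Lemma~\ref{L:LargeDecNorm}) to the combined list, pass to the self-adjoint part $\bar d_j$, and then shift by $\ep'\cdot 1_A$ and rescale to force positivity and the norm bound---is clean and self-contained. The shift-by-$\ep'$ device is a nice way to avoid the obstruction that taking the positive part $(\bar d_j)_+$ does not obviously commute with the two-sided cut by $1-g$; since $1_A\in B$, the shifted element is still an affine combination of things whose two-sided cuts land in~$B$. Your estimate $\|d_j-b_j\|<3\ep'$ is accurate (the key point being $\|\bar d_j+\ep'\|-\|b_j\|\le\ep'$, which controls $(1-\alpha_j)\|b_j\|$), so $\ep'<\ep/3$ suffices.
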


\begin{proof}
The proof is the essentially the same as
the proof of Lemma~4.8 in~\cite{Ph40}.
\end{proof}

\begin{prp}\label{P_4920_TZLarge}
Let $A_1$ and $A_2$ be infinite dimensional simple unital \ca{s},
and let $B_1 \subset A_1$ and $B_2 \subset A_2$ be
centrally large subalgebras.
Assume that $A_1 \otimes_{\min} A_2$
is finite.
Then $B_1 \otimes_{\min} B_2$
is a centrally large subalgebra of $A_1 \otimes_{\min} A_2$.
\end{prp}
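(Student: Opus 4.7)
The plan is to verify the six conditions of Definition~\ref{D_6213_Large} for $B_1 \otimes_{\min} B_2$ as a subalgebra of $A_1 \otimes_{\min} A_2$. Because $A_1 \otimes_{\min} A_2$ is finite by hypothesis, Proposition~\ref{P-2729AltRq} allows me to omit the condition $\|(1-g) x (1-g)\| > 1 - \ep$. Given $a_1, \ldots, a_m \in A_1 \otimes_{\min} A_2$, $\ep > 0$, $x \in (A_1 \otimes_{\min} A_2)_{+}$ with $\| x \| = 1$, and $y \in (B_1 \otimes_{\min} B_2)_{+} \setminus \{0\}$, first approximate each $a_j$ to within $\ep/3$ by a finite sum $\sum_k a_{j,k}^{(1)} \otimes a_{j,k}^{(2)}$ of elementary tensors, collecting the factors into finite sets $S_i \subset A_i$. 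Using simplicity of the minimal tensor product $B_1 \otimes_{\min} B_2$ via a slice-map argument, find nonzero positive $y_i \in B_i$ with $y_1 \otimes y_2 \subeq_{B_1 \otimes B_2} y$, and similarly nonzero positive $x_i \in A_i$ with $x_1 \otimes x_2 \subeq_{A_1 \otimes A_2} x$. Then apply the central largeness of $B_i \subset A_i$ to the input $(S_i, \ep_i, x_i, y_i)$ for a sufficiently small $\ep_i$, obtaining $g_i \in B_i$ and norm-bounded approximations $c_{j,k}^{(i)} \in A_i$ using Lemmas~\ref{L:LargeDecNorm} and~\ref{L:LargeStaysPositive}.

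Now set
\[
g = 1 - (1-g_1) \otimes (1-g_2) = g_1 \otimes 1 + (1-g_1) \otimes g_2 \in B_1 \otimes_{\min} B_2,
\]
and $c_j = \sum_k c_{j,k}^{(1)} \otimes c_{j,k}^{(2)}$. Condition~(\ref{D_Large_Cut1}) is immediate since $0 \leq (1-g_1) \otimes (1-g_2) \leq 1$. For condition~(\ref{D_Large_Cut3}), the product form $(1-g) = (1-g_1) \otimes (1-g_2)$ gives $(1-g)(c_{j,k}^{(1)} \otimes c_{j,k}^{(2)}) = (1-g_1) c_{j,k}^{(1)} \otimes (1-g_2) c_{j,k}^{(2)} \in B_1 \otimes B_2$ by the cutting property from the central largeness of each~$B_i$. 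Condition~(\ref{D_Large_Cut2}) follows by bounding the total norm perturbation in terms of $\ep_1, \ep_2$ and the factor norms. Condition~(\ref{D_Large_Cut6}) on approximate commutation reduces, for each elementary tensor $a \otimes b$, to the identity
\[
[(1-g_1) \otimes (1-g_2), \, a \otimes b]
  = [(1-g_1), a] \otimes b(1-g_2) + a(1-g_1) \otimes [(1-g_2), b] + [(1-g_1), a] \otimes [(1-g_2), b],
\]
each term of which has norm bounded by $\ep_i$ times the factor norms.

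The main obstacle is condition~(\ref{D_Large_Cut4}), establishing $g \subeq_{B_1 \otimes B_2} y$ (and the parallel $g \subeq_{A_1 \otimes A_2} x$) in the Cuntz semigroup of $K \otimes (B_1 \otimes B_2)$. From the algebraic inequality $g \leq g_1 \otimes 1 + 1 \otimes g_2$ one obtains the Cuntz subequivalence $g \subeq (g_1 \otimes 1) \oplus (1 \otimes g_2)$ in $M_2(B_1 \otimes B_2)$. The difficulty is that $g_1 \otimes 1_{B_2}$ is ``full in the second factor'', so the subequivalence $g_1 \subeq_{B_1} y_1$ does not immediately give $g_1 \otimes 1 \subeq_{B_1 \otimes B_2} y$. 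The plan is to exploit simplicity of $B_1 \otimes_{\min} B_2$ (by which $1_{B_2}$ is Cuntz-dominated by finitely many copies of $y_2$ in a matrix algebra over $B_2$), the freedom in the central largeness condition to take $y_1, y_2$ arbitrarily Cuntz-small and hence $g_1, g_2$ arbitrarily Cuntz-small, and the Cuntz-semigroup isomorphism for large subalgebras (Theorem~6.8 of~\cite{Ph40}) to transfer comparisons between $B$ and~$A$. The analogous argument in $A_1 \otimes_{\min} A_2$ yields $g \subeq_A x$. This Cuntz comparison step is the technically most delicate part, and is where the finiteness of $A_1 \otimes_{\min} A_2$ enters essentially beyond its use in invoking Proposition~\ref{P-2729AltRq}.
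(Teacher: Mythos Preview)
Your approach is essentially the same as the paper's: the paper refers to Proposition~5.6 of~\cite{Ph40} for the ``large'' part of the argument (your conditions (\ref{D_Large_Cut1})--(\ref{D_Large_Cut4})), invokes Proposition~\ref{P-2729AltRq} to dispense with condition~(\ref{D_Large_Cut5}) via finiteness, and then verifies the commutation condition~(\ref{D_Large_Cut6}) exactly as you do, with the same choice $g = 1 - (1-g_1)\otimes(1-g_2)$.

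Two small corrections are worth making. First, your three-term commutator expansion is valid but redundant; the paper uses the two-term identity
\[
[(1-g_1)\otimes(1-g_2),\,a\otimes b]
  = [\,1-g_1,\,a\,]\otimes (1-g_2)b + a(1-g_1)\otimes[\,1-g_2,\,b\,],
\]
which gives the estimate directly. Second, your final remarks are off target: finiteness of $A_1\otimes_{\min}A_2$ is used \emph{only} to invoke Proposition~\ref{P-2729AltRq}, not in the Cuntz comparison, and Theorem~6.8 of~\cite{Ph40} plays no role here. The subequivalence $g \subeq_B y$ (and analogously $g \subeq_A x$) is handled entirely inside $B_1\otimes_{\min}B_2$ (respectively $A_1\otimes_{\min}A_2$) by the mechanism you outline---shrinking the $y_i$ before applying central largeness so that $(g_1\otimes 1)\oplus(1\otimes g_2)\subeq y_1\otimes y_2$---and this is precisely what Proposition~5.6 of~\cite{Ph40} does for large subalgebras; no transfer between $B$ and $A$ is needed.
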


\begin{proof}
The proof is essentially the same as that of
Proposition~5.6 of~\cite{Ph40};
we describe the differences.

In place of Proposition~4.5 of~\cite{Ph40},
we use Proposition~\ref{P-2729AltRq}.
The analog for centrally large subalgebras
of Lemma~4.2 of~\cite{Ph40}
is obvious.

We also need to check the approximate commutation relation
in the conclusion,
that is,
using the notation of the proof of Proposition~5.6 of~\cite{Ph40},
we take
\[
g = 1 - (1 - g_1) \otimes (1 - g_2),
\]
and we need to prove that
\[
\| [ g, \, a_{1, j} \otimes a_{2, j} ] \| < \ep
\]
for $j = 1, 2, \ldots, m$.
For $l = 1, 2$,
since $B_l$ is centrally large in~$A_l$,
we may require,
in addition to the conditions demanded
in the proof of Proposition~5.6 of~\cite{Ph40},
that
$\| [g_l, a_{l, j} ] \| < \frac{\ep}{2}$.
Then also
$\| [1 - g_l, \, a_{l, j} ] \| < \frac{\ep}{2}$.
Therefore, using $\| 1 - g_l \| \leq 1$ and $\| a_{l, j} \| \leq 1$
at the fourth step,
we get
\begin{align*}
\| [ g, \, a_{1, j} \otimes a_{2, j} ] \|
& = \| [1 - g, \, a_{1, j} \otimes a_{2, j} ] \|
  = \big\| \big[ (1 - g_1) \otimes (1 - g_2),
                 \, a_{1, j} \otimes a_{2, j} \big] \big\|
\\
& \leq \big\| [1 - g_1, \, a_{1, j} ] \otimes (1 - g_2) a_{2, j} \big\|
     + \big\| a_{1, j} (1 - g_1) \otimes [1 - g_2, \, a_{2, j} ] \big\|
\\
& < \frac{\ep}{2} + \frac{\ep}{2}
= \ep.
\end{align*}
This completes the proof.
\end{proof}

We now get the analog of Corollary~5.8 of~\cite{Ph40}.

\begin{cor}\label{C_4920_StFinStZLg}
Let $A$ be a stably finite infinite dimensional simple unital \ca,
and let $B \subset A$ be a centrally large subalgebra.
Then $B$ is stably centrally large in~$A$.
\end{cor}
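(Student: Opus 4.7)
The strategy is to verify the hypotheses of the alternative characterization of central largeness given by Proposition~\ref{P-2729AltRq}, applied to the inclusion $M_n(B) \subset M_n(A)$ for each $n \in \N$. This is legitimate because $A$ is stably finite, so $M_n(A)$ is finite. It therefore suffices, for arbitrary $m \in \N$, $a_1, \ldots, a_m \in M_n(A)$, $\ep > 0$, and $y \in M_n(B)_{+} \SM \{ 0 \}$, to produce $c_1, \ldots, c_m \in M_n(A)$ and $g \in M_n(B)$ satisfying $0 \leq g \leq 1$, $\| c_j - a_j \| < \ep$, $(1 - g) c_j \in M_n(B)$, $g \subeq_{M_n(B)} y$, and $\| g a_j - a_j g \| < \ep$.

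Expand each $a_j$ in matrix units as $a_j = \sum_{k, l = 1}^{n} e_{k, l} \otimes a_{j, k, l}$ with $a_{j, k, l} \in A$. Because $B$ is simple and in\fd{} (being large in the simple in\fd{} \ca{} $A$; see Propositions~5.2 and~5.5 of~\cite{Ph40}), a standard divisibility property of the Cuntz semigroup of a simple in\fd{} unital \ca{} supplies a $y_0 \in B_{+} \SM \{ 0 \}$ with $1_{M_n} \otimes y_0 \subeq_{M_n(B)} y$. (Concretely, choose a nonzero diagonal entry $y_{p, p}$ of $y$ so that $y_{p, p} \subeq_{M_n(B)} y$ via $e_{p, p}^{*} y e_{p, p}$, then take $y_0 \in B_{+} \SM \{ 0 \}$ with $n \cdot y_0 \subeq_B y_{p, p}$ in $M_n(B)$.) Now apply Proposition~\ref{P-2729AltRq} to the inclusion $B \subset A$, valid since $A$ itself is finite, with the $m n^2$ matrix entries $\{ a_{j, k, l} \}$, tolerance $\ep' = \ep / n^2$, and the element $y_0$. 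This yields $c_{j, k, l} \in A$ and a single $g_0 \in B$ such that $0 \leq g_0 \leq 1$, $\| c_{j, k, l} - a_{j, k, l} \| < \ep'$, $(1 - g_0) c_{j, k, l} \in B$, $g_0 \subeq_B y_0$, and $\| [ g_0, a_{j, k, l} ] \| < \ep'$.

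Set $g = 1_{M_n} \otimes g_0 \in M_n(B)$ and $c_j = \sum_{k, l} e_{k, l} \otimes c_{j, k, l} \in M_n(A)$. Each of the five required conditions now transfers routinely from its entrywise counterpart: positivity of $g$ is inherited from $g_0$; the bound $\bigl\| \sum_{k, l} e_{k, l} \otimes x_{k, l} \bigr\| \leq n^2 \max_{k, l} \| x_{k, l} \|$ converts the entrywise approximation and commutator estimates into their $\ep$-versions; the identity $(1 - g) c_j = \sum_{k, l} e_{k, l} \otimes (1 - g_0) c_{j, k, l}$ places $(1 - g) c_j$ in $M_n(B)$; and $g = 1_{M_n} \otimes g_0 \subeq_{M_n(B)} 1_{M_n} \otimes y_0 \subeq_{M_n(B)} y$ comes from $g_0 \subeq_B y_0$ together with our choice of $y_0$. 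The main obstacle is the Cuntz-divisibility step above: extracting from an arbitrary nonzero positive $y \in M_n(B)$ a small element $y_0 \in B$ whose $n$-fold diagonal sum is still Cuntz dominated by $y$. Once that folklore fact about in\fd{} simple unital \ca{s} is invoked, the remaining verifications are mechanical.
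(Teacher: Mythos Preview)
Your argument is correct. The paper's own proof is a one-line application of Proposition~\ref{P_4920_TZLarge} with $A_1 = B_1 = M_n$, $A_2 = A$, and $B_2 = B$; that proposition in turn defers to Proposition~5.6 of~\cite{Ph40} with the commutator estimate grafted on. What you have done is carry out exactly that tensor-product argument by hand in the special case where one factor is $M_n$ with the trivial subalgebra $M_n$: decompose into matrix entries, apply central largeness of $B$ in $A$ entrywise, set $g = 1_{M_n} \otimes g_0$, and reassemble. So the two routes are the same argument, organized differently. Your direct version has the incidental merit of not bumping against the formal hypothesis in Proposition~\ref{P_4920_TZLarge} that both factors be infinite dimensional (which $M_n$ is not, though the hypothesis is of course inert when $B_1 = A_1$). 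One wording point: when you say ``apply Proposition~\ref{P-2729AltRq} to the inclusion $B \subset A$'' you are really just invoking Definition~\ref{D_6213_Large} to obtain $g_0$ and the $c_{j,k,l}$; Proposition~\ref{P-2729AltRq} is a sufficient condition, and you only need it at the $M_n$ level to dispense with the $x$-dependent clauses.
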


\begin{proof}
In Proposition~\ref{P_4920_TZLarge}
take $A_1 = B_1 = M_n$,
$A_2 = A$, and $B_2 = B$.
\end{proof}

In the definition of a centrally large subalgebra,
it suffices to verify the conditions
for $a_1, a_2, \ldots, a_m$
in a generating set.
(We do not know the analog of this statement for large subalgebras.
The approximate commutation relation
makes the difference.)
We do not use this result in the rest of the paper,
but it seems potentially useful elsewhere.

The proof requires the analog of Proposition~4.4 of~\cite{Ph40}.

\begin{lem}\label{L_4917_ZDist}
Let $A$ be an infinite dimensional simple unital \ca,
and let $B \subset A$ be a unital subalgebra.
Suppose that every finite set $F \subset A$,
$\ep > 0$, $x \in A_{+}$ with $\| x \| = 1$,
and $y \in B_{+} \setminus \{ 0 \}$,
there is $g \in B$
such that:
\begin{enumerate}
\item\label{L_4917_ZDist_Cut1}
$0 \leq g \leq 1$.
\item\label{L_4917_ZDist_Cut2}
$\dist ( (1 - g) a, \, B) < \ep$ for all $a \in F$.
\item\label{L_4917_ZDist_Cut4}
$g \subeq_B y$ and $g \subeq_A x$.
\item\label{L_4917_ZDist_Cut5}
$\| (1 - g) x (1 - g) \| > 1 - \ep$.
\item\label{L_4917_ZDist_Cut6}
$\| g a - a g \| < \ep$ for all $a \in F$.
\end{enumerate}
Then $B$ is centrally large in~$A$.
\end{lem}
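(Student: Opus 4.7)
Given $F = \{a_1, \ldots, a_m\}$, $\ep > 0$, $x \in A_{+}$ with $\|x\| = 1$, and $y \in B_{+} \setminus \{0\}$, the plan is to apply the hypothesis at a sufficiently small tolerance $\dt \ll \ep$, then use continuous functional calculus on the resulting $g_0$ to produce elements satisfying the exact conditions of Definition~\ref{D_6213_Large}. Choosing $\dt > 0$ in terms of $\ep$, $\max_j \|a_j\|$, and the continuous functions introduced below via Lemmas~\ref{FuncCalcCom} and~\ref{L_4917_Near}, the hypothesis gives $g_0 \in B$ with $0 \leq g_0 \leq 1$, $\dist((1-g_0) a_j, B) < \dt$ for each $j$, $g_0 \subeq_B y$ and $g_0 \subeq_A x$, $\|(1-g_0) x (1-g_0)\| > 1 - \dt$, and $\|[g_0, a_j]\| < \dt$. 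I would then pick $b_j \in B$ with $\|(1-g_0) a_j - b_j\| < \dt$.

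The natural candidate $c_j = g_0 a_j + b_j$ satisfies $\|c_j - a_j\| < \dt$ and $(1-g_0) c_j = (1-g_0) b_j + g_0 (1-g_0) a_j$, where the first summand lies in $B$ and the second is within $\dt$ of $g_0 b_j \in B$, giving only approximate membership. To force the exact membership required by Definition~\ref{D_6213_Large}(3), I would replace $g_0$ by $g = \psi(g_0) \in B$ for a continuous $\psi \colon [0, 1] \to [0, 1]$ with $\psi(0) = 0$, $\psi \geq \id$, $\psi(s) = s$ on $[0, 1-2\af]$, and $\psi(s) = 1$ on $[1-\af, 1]$ (with small $\af > 0$ and linear interpolation between). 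Then $1 - g = (1-g_0) \, q(1-g_0)$ for some continuous $q$ with $q(0) = 0$, so by Lemma~\ref{L_4917_Near} there is $e_j \in B$ close to $q(1-g_0) a_j$. Modifying $c_j$ to the form $g a_j + (1-g_0) e_j$ expresses $(1-g) c_j$ as a sum whose terms are either directly in $B$ or residual terms of the shape $\mu(g_0) a_j$ with $\mu$ continuous and vanishing at both $0$ and $1$.

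The hard part will be arranging that $(1-g) c_j$ lies in $B$ \emph{exactly}, not merely close to it. Direct manipulation via Corollary~\ref{C_4926_InSub} and Lemma~\ref{L_4917_Near} yields only approximate membership, even after factoring $\mu(g_0) = (1-g_0) \hat{\mu}(g_0)$ or $\mu(g_0) = g_0 \tilde{\mu}(g_0)$. I would resolve this by an iterative construction: starting from the approximate solution above with residue norm bounded by some $\dt_0$, inductively apply the hypothesis with tolerances $\dt_n = 2^{-n} \dt_0$ and augmented data sets incorporating the previous iterates to produce $(g^{(n)}, c_j^{(n)})$ with $\|g^{(n+1)} - g^{(n)}\| < 2^{-n} \ep$, $\|c_j^{(n+1)} - c_j^{(n)}\| < 2^{-n} \ep$, and residue norm $< \dt_{n+1}$. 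By completeness of $A$ and closedness of $B$, the sequences converge to $g \in B$ and $c_j \in A$ with $(1-g) c_j \in B$ exactly, and the Cuntz, norm, and commutator conditions pass to the limit by continuity.

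Finally, I would verify Definition~\ref{D_6213_Large}(1)--(6): (1) is immediate from the functional calculus; (2) and (3) follow from the construction together with the limiting procedure; (4) because $\psi(0) = 0$ gives $g = \psi(g_0) \subeq g_0 \subeq y, x$; (5) because $\|g - g_0\| \leq \af$ yields $\|(1-g) x (1-g)\| > 1 - \dt - 3\af > 1 - \ep$ when $\dt$ and $\af$ are chosen small enough; and (6) because Lemma~\ref{FuncCalcCom} applied to $\psi$ ensures $\|[g, a_j]\| < \ep$ whenever $\|[g_0, a_j]\|$ is sufficiently small.
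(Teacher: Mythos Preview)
Your functional-calculus instinct is right, but the iterative scheme you propose to force exact membership $(1-g)c_j \in B$ has a genuine gap: there is no mechanism in the hypothesis to make the sequence $(g^{(n)})$ Cauchy. Each application of the hypothesis produces a \emph{fresh} element $g^{(n+1)} \in B$ satisfying the listed conditions relative to the augmented data, but none of those conditions bounds $\|g^{(n+1)} - g^{(n)}\|$. Including $g^{(n)}$ in the finite set at stage $n+1$ only tells you that $g^{(n+1)}$ approximately commutes with $g^{(n)}$ and that $(1-g^{(n+1)})g^{(n)}$ is near $B$ (which is automatic since $g^{(n)} \in B$), not that $g^{(n+1)}$ is close to $g^{(n)}$. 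Without convergence of the $g^{(n)}$ you cannot produce a single $g$ for which $(1-g)c_j \in B$.

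The paper (following Proposition~4.4 of~\cite{Ph40}) avoids iteration entirely by a one-step trick you almost found. Instead of a single function $\psi$, use a \emph{pair} $f_1, f_2 \colon [0,1] \to [0,1]$, each within $\ep/3$ of the identity, with $f_1 f_2 = f_2$ (so $f_1 = 1$ on the support of $f_2$). Set $r_0 = 1 - g_0$ and $g = 1 - f_2(r_0)$. Since $f_1(0) = 0$, Lemma~\ref{L_4917_Near} gives $b_j \in B$ with $\|f_1(r_0)a_j - b_j\|$ small. Now take $c_j = b_j + \bigl(1 - f_1(r_0)\bigr)a_j$. Then $\|c_j - a_j\| = \|b_j - f_1(r_0)a_j\|$ is small, and
\[
(1-g)\,c_j \;=\; f_2(r_0)\,b_j \;+\; f_2(r_0)\bigl(1 - f_1(r_0)\bigr)a_j \;=\; f_2(r_0)\,b_j \;\in\; B
\]
\emph{exactly}, because $f_2(1 - f_1) = 0$ identically. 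The residual term you were trying to kill by iteration simply vanishes. Condition~(\ref{D_Large_Cut6}) then follows from Lemma~\ref{FuncCalcCom} applied to $\ld \mapsto 1 - f_2(1-\ld)$, and the remaining conditions go through as you sketched.
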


\begin{proof}
The proof is essentially the same as
that of Proposition~4.4 of~\cite{Ph40},
except with one addition.
We describe only the addition.

We verify the conditions of Definition~\ref{D_6213_Large}.
Let $m \in \Nz$,
let $a_1, a_2, \ldots, a_m \in A$,
let $\ep > 0$,
let $x \in A_{+}$ satisfy $\| x \| = 1$,
and let $y \in B_{+} \setminus \{ 0 \}$.
Let the functions $f_0, f_1, f_2 \colon [0, 1] \to [0, 1]$
be as in the proof of Proposition~4.4 of~\cite{Ph40}.
Define a \cfn{} $f \colon [0, 1] \to [0, 1]$
by $f (\ld) = 1 - f_0 (1 - \ld)$
for $\ld \in [0, 1]$.
Apply Lemma~\ref{FuncCalcCom} with this function~$f$
and with $\ep$ as given,
getting $\dt > 0$.
Apply the hypothesis with $F = \{ a_1, a_2, \ldots, a_m \}$
and with $\min \left( \dt, \frac{\ep}{3} \right)$ in place of~$\ep$,
getting $g_0 \in B$.
(The difference is that in~\cite{Ph40} we used $\frac{\ep}{3}$ here.)
Define $r_0 = 1 - g_0$.
Set $g = 1 - f_2 (r_0)$.

For $j = 1, 2, \ldots, m$,
define $c_j$ as in the proof of Proposition~4.4 of~\cite{Ph40}.
The verification of
(\ref{D_Large_Cut1}),
(\ref{D_Large_Cut2}),
(\ref{D_Large_Cut3}),
(\ref{D_Large_Cut4}),
and~(\ref{D_Large_Cut5})
of Definition~\ref{D_6213_Large} is exactly as in~\cite{Ph40}.
To verify Definition~\ref{D_6213_Large}(\ref{D_Large_Cut6}),
for $j = 1, 2, \ldots, m$ we use $\| g_0 a_j - a_j g_0 \| < \dt$,
$\| a_j \| \leq 1$,
and the choice of~$\dt$,
to get $\| g a_j - a_j g \| < \ep$.
\end{proof}

\begin{prp}\label{P_4917_ZGen}
Let $A$ be an infinite dimensional simple unital \ca,
and let $B \subset A$ be a unital subalgebra.
Let $S \subset A$ be a subset which generates $A$ as a \ca.
Suppose that every finite set $F \subset S$,
$\ep > 0$, $x \in A_{+}$ with $\| x \| = 1$,
and $y \in B_{+} \setminus \{ 0 \}$,
there is $g \in B$
such that the conditions
(\ref{L_4917_ZDist_Cut1}),
(\ref{L_4917_ZDist_Cut2}),
(\ref{L_4917_ZDist_Cut4}),
(\ref{L_4917_ZDist_Cut5}),
and~(\ref{L_4917_ZDist_Cut6}) of Lemma~\ref{L_4917_ZDist} hold.
Then $B$ is centrally large in~$A$.
\end{prp}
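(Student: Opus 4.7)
The plan is to apply Lemma~\ref{L_4917_ZDist}, whose hypothesis requires the same conditions as in the present proposition but for every finite $F \subset A$, rather than merely $F \subset S$. So it suffices to bootstrap the hypothesis from generators to arbitrary finite subsets of~$A$. By a routine norm approximation (the $\ast$-subalgebra $\Sigma$ generated by $S$ is dense in~$A$, and both pointwise conditions $\dist((1-g)a, B) < \ep$ and $\|[g,a]\| < \ep$ are continuous in~$a$), it is enough to verify them when $F \subset \Sigma$ consists of $\ast$-polynomials in~$S$.

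Given such an $F$ and a tolerance $\ep > 0$, let $F_0 \subset S$ be the finite set of generators appearing in the $\ast$-polynomial expressions of elements of~$F$. Apply the hypothesis to $F_0$ with a small $\ep' > 0$ (to be chosen), together with the given $x$ and~$y$, obtaining $g \in B$ that satisfies the Cuntz and norm conditions (\ref{L_4917_ZDist_Cut1}), (\ref{L_4917_ZDist_Cut4}), (\ref{L_4917_ZDist_Cut5}), as well as $\dist((1-g)a, B) < \ep'$ and $\|[g,a]\| < \ep'$ for all $a \in F_0$. For the commutator condition~(\ref{L_4917_ZDist_Cut6}), since $[g, \cdot]$ is a derivation and $\|[g, a^*]\| = \|[g, a]\|$, the Leibniz rule shows by induction on degree that $\|[g, p]\|$ is bounded by a constant (depending on the polynomial $p$ and the norms of elements of~$F_0$) times~$\ep'$.

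The main technical step is propagating the distance condition~(\ref{L_4917_ZDist_Cut2}). Set $r = 1 - g$, so $0 \leq r \leq 1$. Closure under adjoints is immediate: $a^* r = (ra)^*$ is close to~$B$ because $B$ is closed under adjoints, and $ra^* = a^* r + [r, a^*]$ differs from $a^* r$ by at most $\|[g, a]\|$. For products we induct on degree: assume $\dist(rq, B)$ is small for some $\ast$-polynomial~$q$, and let $a \in F_0$. Write
\[
r(qa) \;=\; r^{1/2} \bigl( r^{1/2} q \bigr) a
      \;\approx\; r^{1/2} q \, r^{1/2} a
      \;=\; (r^{1/2} q)(r^{1/2} a),
\]
where the approximation holds because $\|[r^{1/2}, q]\|$ is small: $\|[r, q]\| = \|[g, q]\|$ is small by the commutator propagation above, and Lemma~\ref{FuncCalcCom} applied to $f(\ld) = \ld^{1/2}$ then controls $\|[r^{1/2}, q]\|$. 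By Lemma~\ref{L_4917_Near} applied to the same~$f$, the distances $\dist(r^{1/2} q, B)$ and $\dist(r^{1/2} a, B)$ are small provided $\dist(rq, B)$ and $\dist(ra, B)$ are small, and since $B$ is a subalgebra, $(r^{1/2} q)(r^{1/2} a)$ is close to~$B$.

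The main obstacle is precisely this product step: a direct estimate using $r(qa) = (rq) a$ only places $r(qa)$ near $B \cdot a$, which is not contained in~$B$. The $r^{1/2}$ splitting, made available by Lemmas~\ref{FuncCalcCom} and~\ref{L_4917_Near}, bypasses this by producing two factors each separately close to~$B$. Since $F$ is finite and the degrees of its elements are bounded, the accumulated errors are bounded by a constant times~$\ep'$; choosing $\ep'$ small enough yields the desired tolerance~$\ep$, and Lemma~\ref{L_4917_ZDist} then concludes that $B$ is centrally large in~$A$.
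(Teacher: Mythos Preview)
Your proof is correct and follows essentially the same strategy as the paper: reduce to Lemma~\ref{L_4917_ZDist} via density, propagate the commutator estimate by Leibniz, and---for the crucial distance estimate on products---split $r=1-g$ into fractional powers, using Lemma~\ref{FuncCalcCom} to approximately commute those powers past the factors and Lemma~\ref{L_4917_Near} to control $\dist(r^{\alpha}a,B)$ from $\dist(r a,B)$. The only organisational difference is that you induct on word length using a repeated $r^{1/2}$ split, whereas the paper pads all words to a common length~$n$ (by adjoining $1$ to~$S$) and distributes a single $r^{1/n}$ factor to each letter in one pass; the paper also separates the passage from $S$ to $S\cup S^{*}$ as an explicit reduction at the end, while you fold the adjoint case into the same induction.
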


\begin{proof}
It is clear that if $T \subset A$ spans a dense subset of~$A$,
then Lemma~\ref{L_4917_ZDist}
is still valid if in the hypotheses
we restrict to finite sets $F \subset T$.

Now suppose that $S \subset A$
generates $A$ as a norm closed subalgebra
(without using the adjoint),
and the hypotheses of the proposition hold for this set~$S$.
We prove that $B$ is centrally large in~$A$
by verifying the hypotheses of
the generalization of Lemma~\ref{L_4917_ZDist}
as in the first paragraph.
\Wolog{} $\| a \| \leq 1$ for all $a \in S$.
If the hypotheses hold for~$S$,
they also hold for $S \cup \{ 1 \}$,
so we may assume $1 \in S$.

Let $T$ be the set of all finite products of elements of~$S$.
Following the first paragraph,
let $F \subset T$ be finite,
let $\ep > 0$, let $x \in A_{+}$ satisfy $\| x \| = 1$,
and let $y \in B_{+} \setminus \{ 0 \}$;
it is enough to show that there is $g \in B$
such that the conditions
(\ref{L_4917_ZDist_Cut1}),
(\ref{L_4917_ZDist_Cut2}),
(\ref{L_4917_ZDist_Cut4}),
(\ref{L_4917_ZDist_Cut5}),
and~(\ref{L_4917_ZDist_Cut6}) of Lemma~\ref{L_4917_ZDist} hold.
\Wolog{} $\ep < 1$.

Since $1 \in S$,
there are $n \in \N$
and a finite set $E \subset S$ such that $F$ is contained in
the finite set
\[
F_0
 = \big\{ a_1 a_2 \cdots a_n \colon a_1, a_2, \ldots, a_n \in E \big\}.
\]
We verify the conditions
of Lemma~\ref{L_4917_ZDist}
for $F_0$ instead of~$F$.

Use Lemma~\ref{FuncCalcCom}
to choose $\dt_1 > 0$ such that
whenever $g \in A_{+}$ and $a \in A$ satisfy
\[
0 \leq g \leq 1,
\,\,\,\,\,\,
\| a \| \leq 1,
\andeqn
\| g a - a g \| < \delta_1,
\]
then
\begin{equation}\label{Eq_4920_PwCm}
\big\| (1 - g)^{1/n} a - a (1 - g)^{1/n} \big\| < \frac{\ep}{n^2}.
\end{equation}
Use Lemma~\ref{L_4917_Near}
(taking $b = 1 - g)$)
to choose $\dt_2 > 0$ such that
whenever $g \in B_{+}$ and $a \in A$ satisfy
\[
\| a \| \leq 1,
\,\,\,\,\,\,
0 \leq g \leq 1,
\andeqn
\dist ((1 - g) a, \, B) < \delta_2,
\]
then
\[
\dist \big( (1 - g)^{1/n} a, \, B \big) < \frac{\ep}{2^n n}.
\]
Set $\dt = \min \big( \dt_1, \dt_2, \frac{\ep}{n} \big)$.
Apply the assumption
with $E$ in place of~$F$,
with $\dt$ in place of~$\ep$,
and with $x$ and $y$ as given,
obtaining $g \in B$.

Conditions (\ref{L_4917_ZDist_Cut1}),
(\ref{L_4917_ZDist_Cut4}),
and~(\ref{L_4917_ZDist_Cut5}) are immediate.
For~(\ref{L_4917_ZDist_Cut6}),
let $a_1, a_2, \ldots, a_n \in E$.
We use
$\| g a_j - a_j g \| < \frac{\ep}{n}$
and $\| a_j \| \leq 1$ for $j = 1, 2, \ldots, n$
to get
\begin{equation}\label{Eq_4920_Comm}
\| [ g, \, a_1 a_2 \cdots a_n ] \|
 \leq \sum_{j = 1}^n \| a_1 a_2 \cdots a_{j - 1} \|
      \cdot \| g a_j - a_j g \|
      \cdot \| a_{j + 1} a_{j + 2} \cdots a_n \|
 < \ep.
\end{equation}

We verify~(\ref{L_4917_ZDist_Cut2}).
Using~(\ref{Eq_4920_PwCm}),
an estimate similar to that for~(\ref{Eq_4920_Comm}) gives
\[
\big\| \big[ (1 - g)^{1/n}, \, a_1 a_2 \cdots a_k \big]
               a_{k + 1} a_{k + 2} \cdots a_n \big\|
    < \frac{k \ep}{n^2}
\]
for $k = 1, 2, \ldots, n$.
Using $( (1 - g)^{1/n} )^n = 1 - g$
and
\[
\| 1 - g \|, \, \| a_1 \|, \, \| a_2 \|, \, \ldots, \, \| a_n \| \leq 1,
\]
we then get
\begin{equation}\label{Eq_4920_Full}
\big\| (1 - g) a_1 a_2 \cdots a_n
   - (1 - g)^{1/n} a_2 (1 - g)^{1/n} a_1 \cdots (1 - g)^{1/n} a_n \big\|
 < \sum_{k = 0}^{n - 1} \frac{k \ep}{n^2}
 < \frac{\ep}{2}.
\end{equation}
Now for $j = 1, 2, \ldots, n$,
use $\dist ( (1 - g) a_j, \, B) < \dt_2$ and the choice of $\dt_2$
to choose $b_j \in B$ such that
\[
\| (1 - g)^{1/n} a_j - b_j \| < \frac{\ep}{2^n n}.
\]
Since $\ep < 1$ and $\| a_j \| \leq 1$,
we have $\| b_j \| \leq 2$;
also,
$\| (1 - g)^{1/n} a_j \| \leq 1 \leq 2$.
For $k = 1, 2, \ldots, n$,
we therefore get
\begin{align*}
& \big\| b_1 b_2 \cdots b_{k - 1}
      \big[ (1 - g)^{1/n} a_k - b_k \big]
          (1 - g)^{1/n} a_{k + 1} (1 - g)^{1/n} a_{k + 2}
           \cdots (1 - g)^{1/n} a_n \big\|
\\
& \alignInd
 < 2^{n - 1} \left( \frac{\ep}{2^n n} \right)
 = \frac{\ep}{2 n}.
\end{align*}
It follows that
\[
\big\| b_1 b_2 \cdots b_n
  - (1 - g)^{1/n} a_1 (1 - g)^{1/n} a_2 (1 - g)^{1/n} a_n \big\|
 < \frac{\ep}{2}.
\]
Since $b_1 b_2 \cdots b_n \in B$,
combining this estimate with~(\ref{Eq_4920_Full})
gives
\[
\dist \big( (1 - g) a_1 a_2 \cdots a_n, \, B \big) < \ep.
\]
This completes the proof of~(\ref{L_4917_ZDist_Cut2}),
and the verification of the proposition
under the assumption that $S$
generates $A$ as a norm closed subalgebra.

We now prove the proposition as stated by reducing the general case
to the case just done.
\Wolog{} $\| a \| \leq 1$ for all $a \in S$.
Set $T = S \cup S^*$.
Let $\ep > 0$,
let $F \subset T$  be finite,
let $\ep > 0$, let $x \in A_{+}$ satisfy $\| x \| = 1$,
and let $y \in B_{+} \setminus \{ 0 \}$.
Choose $g$ as in the hypotheses with $\frac{\ep}{2}$
in place of~$\ep$.
Conditions (\ref{L_4917_ZDist_Cut1}),
(\ref{L_4917_ZDist_Cut4}),
and~(\ref{L_4917_ZDist_Cut5}) are immediate.
Conditions (\ref{L_4917_ZDist_Cut2})
and~(\ref{L_4917_ZDist_Cut6})
for elements of $S$ are immediate.
So let $a \in S^*$.
Thus $a^* \in S$,
so $g^* = g$ implies
\[
\| g a - a g \|
  = \| a^* g - g a^* \|
  < \frac{\ep}{2}
  < \ep,
\]
which is~(\ref{L_4917_ZDist_Cut6}).
Also,
\[
\dist (a g, B)
 = \dist ( (a g)^*, \, B)
 = \dist (g a^*, B)
 < \frac{\ep}{2},
\]
so
\[
\dist (g a, B)
 \leq \| g a - a g \| + \dist (a g, B)
 < \frac{\ep}{2} +  \frac{\ep}{2}
 = \ep,
\]
which is~(\ref{L_4917_ZDist_Cut2}).
\end{proof}

\section{Large subalgebras of crossed product type}\label{Sec:Rq}

\indent
The motivating example for centrally large subalgebras
(Definition~\ref{D_6213_Large})
is subalgebras which arise in the study of crossed products
by minimal homeomorphisms.
In particular,
examples of centrally large subalgebras
are provided by orbit breaking subalgebras $C^* ({\mathbb{Z}}, X, h)_Y$
as in the introduction,
under the condition that $h^n (Y) \cap Y = \varnothing$
for all $n \in \Z \setminus \{ 0 \}$.
These are generalizations of Putnam's algebra
$C^* ({\mathbb{Z}}, X, h)_{ \{ y \} }
 \subset C^* ({\mathbb{Z}}, X, h)$
from~\cite{Pt1},
and the condition above,
that $Y$ meet each orbit at most once,
was used by Putnam in Example~2.6 of~\cite{Pt4}.
See the discussion after Definition~7.3 in~\cite{Ph40}
for more,
and for further generalizations.
These are in fact large subalgebras of crossed product type
(Definition~\ref{D-2717CPType} below).
Definition~\ref{D-2717CPType}
has the advantage of not explicitly
requiring any commutation relations.

\begin{dfn}[Definition~4.9 of~\cite{Ph40}]\label{D-2717CPType}
Let $A$ be
an in\fd{} simple separable unital \ca.
A subalgebra $B \subset A$ is said to be a
{\emph{large subalgebra of crossed product type}}
if there exist a subalgebra
$C \subset B$ and a subset $G$ of the unitary group of~$A$ such that:
\begin{enumerate}
\item\label{D-2717CPType-Sb}
\begin{enumerate}
\item\label{D-2717CPType-Sb1}
$C$ contains the identity of~$A$.
\item\label{D-2717CPType-Sb2}
$C$ and $G$ generate $A$ as a \ca.
\item\label{D-2717CPType-Sb3}
$u C u^* \subset C$ and $u^* C u \subset C$ for all $u \in G$.
\end{enumerate}
\item\label{D-2717CPType-Cut}
For every $m \in \N$,
$a_1, a_2, \ldots, a_m \in A$,
$\ep > 0$, $x \in A_{+}$ with $\| x \| = 1$,
and $y \in B_{+} \setminus \{ 0 \}$,
there are $c_1, c_2, \ldots, c_m \in A$ and $g \in C$
such that:
\begin{enumerate}
\item\label{D-2717CPType-Cut1}
$0 \leq g \leq 1$.
\item\label{D-2717CPType-Cut2}
For $j = 1, 2, \ldots, m$ we have
$\| c_j - a_j \| < \ep$.
\item\label{D-2717CPType-Cut3}
For $j = 1, 2, \ldots, m$ we have
$(1 - g) c_j \in B$.
\item\label{D-2717CPType-Cut4}
$g \subeq_B y$ and $g \subeq_A x$.
\item\label{D-2717CPType-Cut5a}
$\| (1 - g) x (1 - g) \| > 1 - \ep$.
\end{enumerate}
\end{enumerate}
\end{dfn}

The conditions in~(\ref{D-2717CPType-Cut})
are the same as the conditions in Definition~\ref{D_6213_Large};
the difference is that
we require that $g \in C$,
not merely that $g \in B$.
In particular,
we have the following relation between the properties.

\begin{prp}[Proposition~4.10 of~\cite{Ph40}]\label{P-2814CTImpL}
Let $A$ be an in\fd{} simple separable unital \ca,
and let $B \subset A$ be a subalgebra.
If $B$ is a large subalgebra of~$A$
of crossed product type
in the sense of Definition~\ref{D-2717CPType},
then $B$ is a large subalgebra of~$A$
in the sense of Definition~\ref{D_6213_Large}.
\end{prp}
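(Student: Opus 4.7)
The plan is to verify this proposition by direct inspection of the two definitions. Comparing Definition~\ref{D-2717CPType} and Definition~\ref{D_6213_Large} side by side, the cutting conditions (\ref{D-2717CPType-Cut1})--(\ref{D-2717CPType-Cut5a}) in the former and (\ref{D_Large_Cut1})--(\ref{D_Large_Cut5}) in the latter are word-for-word the same; the only difference is that Definition~\ref{D-2717CPType} further demands that the cutting element $g$ lie in the subalgebra $C \subset B$, while Definition~\ref{D_6213_Large} only asks that $g \in B$.

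Concretely, starting from arbitrary data $m \in \N$, $a_1, \ldots, a_m \in A$, $\ep > 0$, $x \in A_+$ with $\|x\| = 1$, and $y \in B_+ \setminus \{0\}$, I would simply apply Definition~\ref{D-2717CPType}(\ref{D-2717CPType-Cut}) to this same data. This yields elements $c_1, c_2, \ldots, c_m \in A$ and $g \in C$ with the listed properties. Since $C$ is a subalgebra of~$B$, we have $g \in B$, and then conditions (\ref{D_Large_Cut1})--(\ref{D_Large_Cut5}) of Definition~\ref{D_6213_Large} hold verbatim; in particular, the Cuntz subequivalence $g \subeq_B y$ in~(\ref{D_Large_Cut4}) needs no adjustment because $g$ already lies in~$B$.

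None of the structural data in Definition~\ref{D-2717CPType}(\ref{D-2717CPType-Sb})---the set $G$ of unitaries normalizing~$C$ and the joint generation of $A$ by $C$ and $G$---is needed for this implication; those features are required for subsequent applications, such as the forthcoming proof that a large subalgebra of crossed product type is in fact centrally large. The statement at hand is therefore essentially a tautology, and no real obstacle arises: the proof amounts to the observation $C \subset B$ together with a comparison of the two lists of cutting conditions.
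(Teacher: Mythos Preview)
Your proposal is correct and matches the paper's own proof, which simply states that the result is immediate from the definitions. You have spelled out exactly the relevant observation: the conditions in Definition~\ref{D-2717CPType}(\ref{D-2717CPType-Cut}) coincide with those in Definition~\ref{D_6213_Large}, except that $g$ is required to lie in $C \subset B$ rather than merely in~$B$.
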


\begin{proof}
This is immediate from the definitions.
\end{proof}

In the basic examples,
$G$ will be a discrete amenable group,
$A$ will have the form $A = C^* (G, X)$
for some essentially free minimal action of $G$ on a \cms~$X$,
the set $G$ in the definition will be the standard
copy of the group $G$ in $C^* (G, X)$,
the subalgebra $C$ will be $C (X)$,
and $B$ will be a subalgebra of~$A$ which contains $C (X)$
but is,
informally,
much closer to $A$ than to $C (X)$.
However,
in the definition we do not require
that $C$ be commutative,
and we do not require that $G$ be a group.
In particular,
the definition may be applicable to crossed products
of noncommutative \ca{s},
to cocycle crossed products,
and to quotients of some nonsimple crossed products.
However,
something more complicated may well be needed for use
with groupoid \ca{s}
or more general crossed products.

The main result of this section is that large subalgebras of
crossed product type are \Rql.
To improve readability,
we isolate several parts of the argument as separate lemmas.

\begin{lem}\label{L-2720Narrow}
Let $A$ be a \ca,
let $\ep, \rh \geq 0$,
and let $a, b, d, r, x, y \in A_{\sa}$ satisfy the following:
\[
a \leq d \leq b,
\,\,\,\,\,\,
\| b - a \| \leq \ep,
\,\,\,\,\,\,
\| a - x \| \leq \rh,
\,\,\,\,\,\,
\| r - y \| \leq \rh,
\,\,\,\,\,\,
x \leq r,
\andeqn
y \leq b.
\]
Then $\| d - r \| \leq \ep + \rh$.
\end{lem}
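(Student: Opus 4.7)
The plan is to convert the three norm bounds into two-sided operator inequalities, then chain them with the given order inequalities to squeeze $d - r$ between $-(\ep + \rh) \cdot 1$ and $(\ep + \rh) \cdot 1$. This uses only the elementary fact that for a self-adjoint element $s$ in a \ca{} (or its unitization), $\| s \| \leq \af$ is equivalent to $-\af \leq s \leq \af$.

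The first step is to upgrade the scalar bound $\| b - a \| \leq \ep$ to the operator bound $b - a \leq \ep$. This requires the observation that the chain $a \leq d \leq b$ in particular gives $a \leq b$, so $b - a \geq 0$ is a positive element of norm at most~$\ep$, hence dominated by $\ep \cdot 1$. The remaining norm hypotheses $\| a - x \| \leq \rh$ and $\| r - y \| \leq \rh$ are used only in one direction: $a \geq x - \rh$ (equivalently $x \leq a + \rh$) and $r \leq y + \rh$.

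Now bound $d - r$ from above by chaining: since $d \leq b$ (given) and $r \geq x \geq a - \rh$ (from $x \leq r$ and $\| a - x \| \leq \rh$), we obtain
\[
d - r \;\leq\; b - (a - \rh) \;=\; (b - a) + \rh \;\leq\; \ep + \rh.
\]
Symmetrically, bound $r - d$ from above: since $d \geq a$ and $r \leq y + \rh \leq b + \rh$ (the last step from $y \leq b$), we obtain
\[
r - d \;\leq\; (b + \rh) - a \;=\; (b - a) + \rh \;\leq\; \ep + \rh.
\]
Combining the two estimates gives $-(\ep + \rh) \leq d - r \leq \ep + \rh$, whence $\| d - r \| \leq \ep + \rh$.

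There is no real obstacle here; the statement is a book-keeping lemma whose only mild subtlety is recognizing that the sandwich $a \leq d \leq b$ is what allows the norm bound $\| b - a \| \leq \ep$ to be promoted to an order bound. Once that is noted, the two required inequalities are produced by the same calculation applied symmetrically on either side.
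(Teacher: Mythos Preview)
Your proof is correct and is essentially the same as the paper's; the only cosmetic difference is that the paper first subtracts $d$ from all six elements to reduce to $d = 0$ and then bounds $\| r \|$, whereas you work directly with $d - r$. The chain of order inequalities and the use of $0 \leq b - a \leq \ep$ are identical.
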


\begin{proof}
Subtracting $d$ from all the elements listed,
we reduce to the case $d = 0$,
and we must prove that $\| r \| \leq \ep + \rh$.

Since $\| a - x \| \leq \rh$, we have $a - \rh \leq x$.
{}From $\| a - b \| \leq \ep$ and $b \geq 0$,
we get $- \ep \leq a - b \leq a$.
So $- \ep - \rh \leq a - \rh \leq x \leq r$.

{}From $\| r - y \| \leq \rh$ we get $r \leq y + \rh$.
Since $a \leq 0$ and $\| b - a \| \leq \ep$,
we have $y \leq b \leq b - a \leq \ep$.
Therefore $r \leq \ep + \rh$.
Combining this with the conclusion of
the previous paragraph,
we get $\| r \| \leq \ep + \rh$, as desired.
\end{proof}

\begin{lem}\label{L-2720RtAI}
For every $\ep > 0$ there is $\dt > 0$ such that
whenever $A$ is a \ca\  and $e, x \in A$ satisfy
\[
0 \leq e \leq 1,
\,\,\,\,\,\,
0 \leq x \leq 1,
\andeqn
\| e x - x \| < \dt,
\]
then $\big\| e^{1/2} x - x \big\| < \ep$.
\end{lem}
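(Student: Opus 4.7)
My plan is to show that $\dt = \ep^2$ suffices. The first move is to rewrite the hypothesis as $\|(1 - e) x\| = \|e x - x\| < \dt$, and then to apply the C*-identity to the element $(1 - e^{1/2}) x$, obtaining
\[
\|(1 - e^{1/2}) x\|^2 \;=\; \big\| x (1 - e^{1/2})^2 x \big\|.
\]
It therefore suffices to bound the right-hand side by $\dt$.

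The heart of the argument is the elementary scalar inequality $(1 - t^{1/2})^2 \leq 1 - t$ on $[0, 1]$, which rearranges to $t \leq t^{1/2}$ and so is immediate. Since $0 \leq e \leq 1$, continuous functional calculus lifts this to the operator inequality $(1 - e^{1/2})^2 \leq 1 - e$. Conjugation by the self-adjoint element $x$ preserves the order, so $x (1 - e^{1/2})^2 x \leq x (1 - e) x$. Taking norms and using $\|x\| \leq 1$ together with the hypothesis yields
\[
\|(1 - e^{1/2}) x\|^2 \;\leq\; \| x (1 - e) x \| \;\leq\; \|x\| \cdot \|(1 - e) x\| \;<\; \dt \;=\; \ep^2,
\]
whence $\|e^{1/2} x - x\| = \|(1 - e^{1/2}) x\| < \ep$, as required.

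There is no real obstacle here; the only genuine ingredient is the scalar inequality $(1 - t^{1/2})^2 \leq 1 - t$ combined with the standard C*-identity manipulation. The assumptions $0 \leq x \leq 1$ are used only to ensure that $x = x^*$ preserves the order under conjugation and that $\|x\| \leq 1$ tightens the final estimate; one could even dispense with positivity of $x$ at the cost of a slightly weaker bound.
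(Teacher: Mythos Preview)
Your proof is correct and considerably slicker than the paper's. The paper argues by polynomial approximation: it manufactures a polynomial $p(\ld) = \sum_{k=1}^n \af_k \ld^k$ with $p(0)=0$, $p(1)=1$, and $|p(\ld)-\ld^{1/2}|<\tfrac{\ep}{2}$ on $[0,1]$, then bounds $\|e^k x - x\| \leq k\|ex-x\|$ inductively and combines terms. This yields a $\dt$ that depends on the coefficients of the approximating polynomial, with no explicit form.

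Your route, via the functional-calculus inequality $(1-e^{1/2})^2 \leq 1-e$ and the C*-identity, is both shorter and sharper: it gives the explicit choice $\dt=\ep^2$, which is in fact optimal (take $e$ scalar with $e = 1-\dt$). The polynomial-approximation method has the virtue of generalizing immediately to any continuous $f\colon[0,1]\to\C$ with $f(0)=0$ and $f(1)=1$ in place of $\ld\mapsto\ld^{1/2}$, whereas your argument is tailored to the square root. But for the lemma as stated, your proof is simply better. Your closing remark is also accurate: the argument uses only $x=x^*$ and $\|x\|\leq 1$, and even self-adjointness can be dropped since $\|(1-e^{1/2})x\|^2 = \|x^*(1-e^{1/2})^2 x\| \leq \|x^*(1-e)x\|$ holds for arbitrary~$x$.
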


\begin{proof}
Choose a polynomial $p_0 (\ld) = \sum_{k = 0}^{n} \bt_k \ld^k$
with coefficients $\bt_k \in \R$
such that
$\big| p_0 (\ld) - \ld^{1/2} \big| < \tfrac{\ep}{8}$
for all $\ld \in [0, 1]$.
Define
\[
\af_0 = 0,
\,\,\,\,\,\,
\af_1 = 1 - \sum_{k = 2}^{n} \bt_k,
\,\,\,\,\,\,
\af_2 = \bt_2,
\,\,\,\,\,\,
\af_3 = \bt_3,
\,\,\,\,\,\,
\ldots,
\,\,\,\,\,\,
\af_n = \bt_n.
\]
Since $| p_0 (0) | < \tfrac{\ep}{8}$,
we have $| \af_0 - \bt_0 | < \tfrac{\ep}{8}$.
Since $| p_0 (1) - 1 | < \tfrac{\ep}{8}$,
we have
\[
| \af_1 - \bt_1 |
 = \left| 1 - \bt_1 - \sum_{k = 2}^{n} \bt_k \right|
 \leq \left| 1 - \sum_{k = 0}^{n} \bt_k \right| + | \bt_0 |
 = | 1 - p_0 (1) | + | \bt_0 |
 < \frac{2 \ep}{8}.
\]
Therefore,
for $\ld \in [0, 1]$,
the polynomial
$p (\ld) = \sum_{k = 0}^{n} \af_k \ld^k$
satisfies
\[
\big| p (\ld) - \ld^{1/2} \big|
 \leq | \af_0 - \bt_0 | + | \af_1 - \bt_1 | \ld
    + \big| p_0 (\ld) - \ld^{1/2} \big|
 < \frac{\ep}{8} + \frac{2 \ep}{8} + \frac{\ep}{8}
 = \frac{\ep}{2}.
\]
Moreover $\af_0 = 0$ and $\sum_{k = 0}^{n} \af_k = 1$.

Define
\[
\dt = \frac{\ep}{1 + 2 \sum_{k = 1}^{n} k | \af_k |}.
\]
Suppose now $\| e x - x \| < \dt$.
An induction argument,
using the estimate
\[
\| e^k x - x \|
 \leq \| e^{k - 1} \| \cdot \| e x - x \|
    + \| e^{k - 1} x - x \|,
\]
shows that for all $k \in \N$ we have
$\| e^k x - x \| < k \dt$.
Now,
using $\sum_{k = 1}^{n} \af_k x = x$ at the second step,
we get
\begin{align*}
\big\| e^{1/2} x - x  \big\|
& \leq \big\| e^{1/2} - p (e) \big\| \cdot \| x \|
         + \| p (e) x - x \|
\\
& \leq \big\| e^{1/2} - p (e) \big\|
         + \sum_{k = 1}^{n} | \af_k | \cdot \| e^k x - x \|
  < \frac{\ep}{2} + \sum_{k = 0}^{n} k | \af_k | \dt
  < \ep.
\end{align*}
This completes the proof.
\end{proof}

\begin{lem}\label{L-L2727Csub}
Let $A$ be a simple unital \ca,
let $B \subset A$ be a large subalgebra of crossed product type
(Definition~\ref{D-2717CPType}),
and let $C \subset B$
be the subalgebra of Definition~\ref{D-2717CPType}.
Assume $B \neq A$.
Then for every $y \in B_{+} \setminus \{ 0 \}$
there is $z \in C_{+} \setminus \{ 0 \}$ such that $z \subeq_B y$.
\end{lem}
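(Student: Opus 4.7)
The plan is to use Definition~\ref{D-2717CPType}(\ref{D-2717CPType-Cut}) directly, producing the required $z$ as the element $g \in C$ that the definition furnishes. The critical observation is that, in the definition of a large subalgebra of crossed product type, the cut-down element $g$ is explicitly required to lie in $C$ (not merely in~$B$), and already satisfies $0 \leq g \leq 1$ and $g \subeq_B y$. So the only thing that needs to be arranged is that $g$ can be chosen nonzero, and this is where the hypothesis $B \neq A$ gets used.

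First, since subalgebras are assumed to be C*-subalgebras (hence closed), and $B \neq A$, we may pick some $a \in A \SM B$ and set $\rh = \dist(a, B)$, which is strictly positive. Next, apply Definition~\ref{D-2717CPType}(\ref{D-2717CPType-Cut}) to the data $m = 1$, $a_1 = a$, $\ep = \rh / 2$, $x = 1 \in A_{+}$ (so $\| x \| = 1$), and with $y$ as given in the lemma. This yields $c \in A$ and $g \in C$ with
\[
0 \leq g \leq 1, \,\,\,\,\,\, \| c - a \| < \tfrac{\rh}{2}, \,\,\,\,\,\, (1 - g) c \in B, \andeqn g \subeq_B y.
\]

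Finally, rule out $g = 0$: if $g$ were zero, then $c = (1 - g) c \in B$, giving $\dist (a, B) \leq \| a - c \| < \rh / 2 < \rh$, contradicting the choice of $\rh$. Hence $g \neq 0$, and taking $z = g$ gives an element of $C_{+} \SM \{ 0 \}$ with $z \subeq_B y$, as required.

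There is essentially no hard step; the main content is simply recognizing that the definition of ``large of crossed product type'' hands us a candidate $g \in C$ with all the Cuntz-subequivalence and positivity properties built in, so the problem reduces to the trivial one of forcing $g$ to be nonzero, which in turn reduces to choosing $\ep$ smaller than the distance from $B$ to some fixed element of $A \SM B$.
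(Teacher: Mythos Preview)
Your proof is correct and follows essentially the same approach as the paper: apply Definition~\ref{D-2717CPType}(\ref{D-2717CPType-Cut}) with $\ep$ smaller than the distance from some $a \in A \setminus B$ to~$B$, and observe that $g = 0$ would force $c \in B$, a contradiction. The paper's version differs only cosmetically, scaling $a$ so that $\dist(a, B) > 2$ and taking $\ep = 1$, whereas you leave $a$ unscaled and take $\ep = \rh/2$; the logic is identical.
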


\begin{proof}
Choose any $a \in A$ such that $\dist (a, B) > 2$.
Definition~\ref{D-2717CPType} gives
$c \in A$ and $g \in C$
such that
\[
0 \leq g \leq 1,
\,\,\,\,\,\,
\| c - a \| < 1,
\,\,\,\,\,\,
(1 - g) c \in B,
\andeqn
g \subeq_B y.
\]
The second condition implies that $c \not\in B$,
so the third condition implies that $g \neq 0$.
Therefore the lemma is proved by taking $z = g$.
\end{proof}

\begin{thm}\label{T-2717CptImpRq}
Let $A$ be a stably finite simple unital \ca,
and let $B \subset A$ be a large subalgebra of crossed product type
(Definition~\ref{D-2717CPType}).
Then $B$ is \Rql\  in~$A$ (Definition~\ref{D_6213_Large}).
\end{thm}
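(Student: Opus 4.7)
The plan is to apply Proposition \ref{P_4917_ZGen} with the natural generating set $S = C \cup G$, so that verifying $B$ is centrally large reduces to verifying the conditions of Lemma \ref{L_4917_ZDist} for finite subsets $F \subset S$. Definition \ref{D-2717CPType} already supplies the first five of these conditions; the new content is the approximate commutation condition (\ref{L_4917_ZDist_Cut6}) with the elements of $F$, together with the insistence that the cutoff $g$ lie in $C$ (not merely in~$B$).

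Fix such an $F = F_C \cup F_G$ with $F_C \subset C$ and $F_G = \{u_1, \ldots, u_k\} \subset G$, together with $\ep > 0$, $x \in A_{+}$ of norm~$1$, and $y \in B_{+} \setminus \{0\}$. Choose a F\o lner-type set $W$ of words in $u_1^{\pm 1}, \ldots, u_k^{\pm 1}$ satisfying $|u_i W \triangle W| < (\ep/k) |W|$ for each~$i$. Using Lemma \ref{L-L2727Csub} together with a standard divisibility argument in the Cuntz semigroup of the simple stably finite algebra~$A$, pick an auxiliary $y' \in B_{+} \setminus \{0\}$ whose $|W|$-fold direct sum is Cuntz subequivalent to~$y$ in $K \otimes B$. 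Then apply Definition \ref{D-2717CPType} with the enlarged test list $\{w^{*} a w : w \in W, \, a \in F\} \cup F_C$, with tolerance $\ep' \ll \ep/|W|$, with the given~$x$, and with $y'$ in place of~$y$, obtaining $g_0 \in C$.

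Construct $g \in C$ from the translates $\{w g_0 w^{*}\}_{w \in W} \subset C$ (each lies in~$C$ because $u C u^{*} \subset C$) by continuous functional calculus: in the principal case where $C$ is abelian, set $g = 1 - \prod_{w \in W}(1 - w g_0 w^{*})$, and in the general case use a telescoped functional-calculus analog of this on $\sum_w w g_0 w^{*}$. Approximate invariance $\|u_i g u_i^{*} - g\| < \ep$ follows from the F\o lner condition on~$W$ combined with Lemma \ref{FuncCalcCom}, yielding commutation with $F_G$; commutation with $F_C$ is automatic when $C$ is abelian, and in the general case reduces via Lemma \ref{FuncCalcCom} to $[g_0, c] \approx 0$, which was ensured by including $F_C$ in the test list for~$g_0$. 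The Cuntz bound $g \subeq_B y$ comes from $g$ lying in the hereditary subalgebra of~$B$ generated by $\sum_{w} w g_0 w^{*}$, the addition rule for Cuntz subequivalence (Lemma~1.4 of \cite{Ph40}), and the divisibility choice of~$y'$.

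The hard part is verifying $(1 - g) a \in B$ for $a \in F$. The identity
\[
(1 - w g_0 w^{*}) a \;=\; w \bigl[ (1 - g_0)(w^{*} a w) \bigr] w^{*}
\]
places each single factor in $w B w^{*}$ rather than in~$B$, and the normalizer relation $w B w^{*} = B$ is emphatically \emph{not} available from Definition \ref{D-2717CPType}. Overcoming this requires ordering the factors in the product defining $1 - g$ so that at each stage the accumulated outer conjugating unitary is itself a word in~$W$, whose contribution is absorbed by the next factor via the enlarged CP-type hypothesis; the commutators incurred in reordering are of size $O(\ep/|W|)$ per step by Lemma \ref{FuncCalcCom} applied to the near-commutation of the Cuntz-small conjugates $w g_0 w^{*}$. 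Summing the errors and invoking Corollary \ref{C_4926_InSub} to promote approximate membership in~$B$ to exact membership yields $(1 - g) a \in B$ within tolerance~$\ep$. This careful bookkeeping, orchestrated by the pre-shrinking of~$y'$ and the enlarged test list, is the principal technical obstacle.
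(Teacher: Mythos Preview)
Your proposal has two genuine gaps, and the paper's proof avoids both by a quite different construction.

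First, the F\o lner step is unjustified and, even if granted, does not yield approximate invariance for your~$g$. Definition~\ref{D-2717CPType} puts no amenability hypothesis on the group generated by~$G$, so F\o lner sets of words need not exist. More seriously, even in the abelian case your formula $g = 1 - \prod_{w \in W}(1 - w g_0 w^{*})$ does \emph{not} satisfy $\| u g u^{*} - g \| < \ep$ just because $|uW \triangle W|$ is small relative to~$|W|$: the factors $1 - w g_0 w^{*}$ are not close to~$1$ in norm, so changing even one of them can change the product by~$1$ in norm. (Think of a point $x$ where $w g_0 w^{*}(x) = 1$ for a single $w \in W \setminus uW$.) The averaging intuition applies to sums, not products.

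Second, Corollary~\ref{C_4926_InSub} does not ``promote approximate membership in~$B$ to exact membership''; its hypothesis is exact membership $b a \in E$. Your argument for condition~(\ref{L_4917_ZDist_Cut2}) therefore does not close, and you correctly identify this as the principal obstacle without resolving it.

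The paper's proof handles both issues simultaneously by a different device. It builds a tower $g_0, g_1, \ldots, g_n$ where $g_0 \in C$ comes from Definition~\ref{D-2717CPType} and each $g_k$ is an approximate unit for the hereditary subalgebra of~$B$ generated by $\{u g_0 u^{*} : u \in F_k\}$ (here $F_k$ is the set of words of length~$\leq k$ in~$F_1$). One then sets
\[
r = (1 - g_0)^{1/2}\Big(1 - \tfrac{1}{n}\textstyle\sum_{k=1}^{n-1} g_k\Big)(1 - g_0)^{1/2},
\qquad g = 1 - r.
\]
The explicit factor $(1 - g_0)^{1/2}$ on the right gives $(1 - g) c_j \in B$ exactly, via Corollary~\ref{C_4926_InSub} applied once. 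For approximate commutation, one shows $g$ is within $\ep/4$ of the genuine average $h = \tfrac{1}{n}\sum_{k=0}^{n-1} g_k$, and then a sandwiching argument (Lemma~\ref{L-2720Narrow}) using $u g_{k-1} u^{*} \lesssim g_k$ shows $\| u h u^{*} - h \| < \ep/2$ for $u \in F$. No F\o lner condition is needed: the ``almost invariance'' comes from the tower structure $u D_{k} u^{*} \subset D_{k+1}$, not from the group.
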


We hope that finiteness of $A$ is not necessary,
but we do not know how to prove this.
The difficulty is with condition~(\ref{D-2717RqPf-Cut5a})
in the proof;
when $A$ is finite,
this condition is, in effect, automatic.

We will use a weak
(and obvious) variant of Proposition~\ref{P_4917_ZGen}:
we only restrict to a generating set to verify the
approximate commutation condition.

\begin{proof}[Proof of Theorem~\ref{T-2717CptImpRq}]
For a finite set $T$ and $b_t \in A$ for $t \in T$,
we take $\bigoplus_{t \in T} b_t \in M_{\card (T)} (A)$
to be the diagonal matrix with diagonal entries $b_t$ for $t \in T$.
(The order won't matter in this proof.)
In particular, $\bigoplus_{t \in T} b$ is a $\card (T) \times \card (T)$
diagonal matrix with all entries equal to~$b$.

If $B = A$,
then the conclusion is trivial.
(We can always take $g = 0$.)
We may therefore assume that $B \neq A$.

We must prove that for every $m \in \N$,
$a_1, a_2, \ldots, a_m \in A$,
$\ep > 0$, $x \in A_{+}$ with $\| x \| = 1$,
and $y \in B_{+} \setminus \{ 0 \}$,
there are $c_1, c_2, \ldots, c_m \in A$ and $g \in B$
such that:
\begin{enumerate}
\item\label{D-2717RqPf-Cut1}
$0 \leq g \leq 1$.
\item\label{D-2717RqPf-Cut2}
For $j = 1, 2, \ldots, m$ we have
$\| c_j - a_j \| < \ep$.
\item\label{D-2717RqPf-Cut3}
For $j = 1, 2, \ldots, m$ we have
$(1 - g) c_j \in B$.
\item\label{D-2717RqPf-Cut4}
$g \subeq_B y$ and $g \subeq_A x$.
\item\label{D-2717RqPf-Cut5a}
$\| (1 - g) x (1 - g) \| > 1 - \ep$.
\item\label{D-2717RqPf-Cut5}
For $j = 1, 2, \ldots, m$ we have
$\| g a_j - a_j g \| < \ep$.
\setcounter{TmpEnumi}{\value{enumi}}
\end{enumerate}
It is clearly equivalent to choose some fixed set $S \subset A$
which generates $A$ as a \ca,
and then to prove that for every $m \in \N$,
$a_1, a_2, \ldots, a_m \in A$,
finite subset $F \subset S$,
$\ep > 0$, $x \in A_{+}$ with $\| x \| = 1$,
and $y \in B_{+} \setminus \{ 0 \}$,
there are $c_1, c_2, \ldots, c_m \in A$ and $g \in B$
such that
(\ref{D-2717RqPf-Cut1}),
(\ref{D-2717RqPf-Cut2}),
(\ref{D-2717RqPf-Cut3}),
(\ref{D-2717RqPf-Cut4}),
and~(\ref{D-2717RqPf-Cut5a})
above hold,
and such that in place of~(\ref{D-2717RqPf-Cut5})
the following holds:
\begin{enumerate}
\setcounter{enumi}{\value{TmpEnumi}}
\item\label{D-2717RqMod-Cut5}
For every $x \in F$ we have
$\| g x - x g \| < \ep$.
\setcounter{TmpEnumi}{\value{enumi}}
\end{enumerate}

Now let $C \subset B$ and $G \subset A$
be as in Definition~\ref{D-2717CPType}.
We take the set $S$ above to be
the union of $G$ and the unitary group of~$C$.
Then $S$ is a set of unitaries which generates $A$ as a \ca,
and $u C u^* \subset C$ and $u^* C u \subset C$ for all $u \in S$.

Let $m \in \N$,
let $a_1, a_2, \ldots, a_m \in A$,
let $F \subset S$ be finite,
let $\ep > 0$,
let $x \in A_{+}$ satisfy $\| x \| = 1$,
and let $y \in B_{+} \setminus \{ 0 \}$.
Choose $n \in \N$ such that
\begin{equation}\label{Eq_6213_1ne8}
\frac{1}{n} < \frac{\ep}{8}.
\end{equation}
Set
\[
F_0 = \{ 1 \},
\,\,\,\,\,\,
F_1 = F \cup F^* \cup \{ 1 \},
\,\,\,\,\,\,
F_2 = F_1 F_1,
\,\,\,\,\,\,
F_3 = F_1 F_2,
\,\,\,\,\,\,
\ldots,
\,\,\,\,\,\,
F_n = F_1 F_{n - 1}.
\]
Set $N = \card (F_1)$.
Use Lemma~2.9 of~\cite{Ph40}
to choose $y_0 \in ( {\overline{x A x}})_{+} \setminus \{ 0 \}$
such that:
\begin{enumerate}
\setcounter{enumi}{\value{TmpEnumi}}
\item\label{Cd_6213_y0}
If $g \in A_{+}$ satisfies $0 \leq g \leq 1$
and $g \subeq_A y_0$,
then $\| (1 - g) x (1 - g) \| > 1 - \ep$.
\setcounter{TmpEnumi}{\value{enumi}}
\end{enumerate}
In particular,
\begin{equation}\label{Eq_6213_y0Ax}
y_0 \subeq_A x.
\end{equation}
Proposition~\ref{P-2814CTImpL}
implies that $B$ is a large subalgebra of~$A$,
and Proposition~5.2 of~\cite{Ph40}
now implies that $B$ is simple.
Use Lemma~5.3 of~\cite{Ph40}
with $r = 1$, $a = y_0$, and $\ep = 1$
to choose
$y_1 \in B_{+} \setminus \{ 0 \}$
such that
\begin{equation}\label{Eq_6213_y1Ay0}
y_1 \subeq_A y_0.
\end{equation}
(We don't need part (3) of the conclusion of this lemma,
and part~(1) is only used to ensure that $y_1 \neq 0$.)
Use simplicity of~$B$
and Lemma~2.6 of~\cite{Ph40}
to choose $y_2 \in B_{+} \setminus \{ 0 \}$
such that
\begin{equation}\label{Eq_6213_y2By1y2By}
y_2 \subeq_B y_1
\andeqn
y_2 \subeq_B y.
\end{equation}
Since $B$ is simple,
Lemma~2.4 of~\cite{Ph40}
provides a nonzero positive
element $y_3 \in {\overline{y_2 B y_2}}$
such that the following condition holds:
\begin{enumerate}
\setcounter{enumi}{\value{TmpEnumi}}
\item\label{Cd_6213_ManyOrth}
There are $N^{n - 1} + 1$ orthogonal positive elements
in ${\overline{y_2 B y_2}}$,
each Cuntz equivalent in~$B$ to~$y_3$.
\setcounter{TmpEnumi}{\value{enumi}}
\end{enumerate}
Now use Lemma~\ref{L-L2727Csub}
to choose $z_0 \in C_{+} \setminus \{ 0 \}$
such that
\begin{equation}\label{Eq_6213_z0By3}
z_0 \subeq_B y_3.
\end{equation}
Then $u^* z_0 u \in C \subset B$ for all $u \in F_{n - 1}$.
Use Lemma~2.6 of~\cite{Ph40}
(applied to~$B$) to choose $z_1 \in B_{+} \setminus \{ 0 \}$
such that
\begin{equation}\label{Eq_6213_z1Buz0}
z_1 \subeq_B u^* z_0 u
\end{equation}
for all $u \in F_{n - 1}$.
Then use Lemma~\ref{L-L2727Csub}
to choose $z \in C_{+} \setminus \{ 0 \}$
such that
\begin{equation}\label{Eq_6213_zBz1}
z \subeq_B z_1.
\end{equation}

Since $A$ is simple,
Lemma~2.4 of~\cite{Ph40}
provides a nonzero positive
element $x_0 \in {\overline{x A x}}$
such that the following condition holds:
\begin{enumerate}
\setcounter{enumi}{\value{TmpEnumi}}
\item\label{Cd_6213_FnOrth}
There are $\card (F_{n - 1})$ orthogonal positive elements
in ${\overline{x A x}}$,
each Cuntz equivalent in~$A$ to~$x_0$.
\setcounter{TmpEnumi}{\value{enumi}}
\end{enumerate}

Apply Definition~\ref{D-2717CPType}
with $a_1, a_2, \ldots, a_m$,
and $\ep$ as given,
and with $z$ in place of~$y$ and $x_0$ in place of $x$.
Let $c_1, c_2, \ldots, c_m \in A$ and $g_0 \in C$
be the resulting elements.
In particular,
\begin{equation}\label{Eq_6213_g0Bzg0Ax0}
g_0 \subeq_B z
\andeqn
g_0 \subeq_A x_0.
\end{equation}

Choose $\ep_0 > 0$ so small that the following three things happen:
\begin{enumerate}
\setcounter{enumi}{\value{TmpEnumi}}
\item\label{6214_Ep0_e8}
$\ep_0 < \tfrac{\ep}{8}$.
\item\label{6214_Ep0_FCC}
The choice $2 \ep_0$ can be used for~$\dt$
in Lemma~\ref{FuncCalcCom} when $\ep$ there
is taken to be~$\tfrac{\ep}{8}$ and when $f(x) = x^{1/2}$.
\item\label{6214_Ep0_RTAI}
The choice $\ep_0$ can be used for~$\dt$
in Lemma~\ref{L-2720RtAI} when $\ep$ there
is taken to be~$\tfrac{\ep}{8}$.
\setcounter{TmpEnumi}{\value{enumi}}
\end{enumerate}

For $k = 0, 1, \ldots, n$ and all $u \in F_k$,
we have $u g_0 u^* \in C$.
For $k = 0, 1, \ldots, n$,
let $D_k \subset B$ be the hereditary
subalgebra of~$B$ generated by all $u g_0 u^*$ for $u \in F_k$.
Then $u D_k u^* \subset D_{k + 1}$
for $u \in F_1$ and $k = 0, 1, \ldots, n - 1$.
By induction, choose
\[
g_1 \in D_1,
\,\,\,\,\,\,
g_2 \in D_2,
\,\,\,\,\,\,
\ldots,
\,\,\,\,\,\,
g_{n} \in D_{n}
\]
such that
\begin{equation}\label{Eq_6214_AppId}
0 \leq g_k \leq 1
\andeqn
\| g_k u g_l u^* - u g_l u^* \| < \ep_0
\end{equation}
for
\[
k = 1, 2, \ldots, n,
\,\,\,\,\,\,
u \in F_1,
\andeqn
l = 0, 1, \ldots, k - 1.
\]

Now define
\begin{equation}\label{Eq_6214_Dfg}
r_0
 = \left( 1 - \frac{1}{n} \sum_{k = 1}^{n - 1} g_k \right)^{1/2}
     \!\!\!  \cdot (1 - g_0)^{1/2},
\,\,\,\,\,\,
r = r_0^* r_0,
\andeqn
g = 1 - r.
\end{equation}
We verify conditions
(\ref{D-2717RqPf-Cut1}),
(\ref{D-2717RqPf-Cut2}),
(\ref{D-2717RqPf-Cut3}),
(\ref{D-2717RqPf-Cut4}),
(\ref{D-2717RqPf-Cut5a}),
and~(\ref{D-2717RqMod-Cut5}).

For~(\ref{D-2717RqPf-Cut1}),
we clearly have $\| r_0 \| \leq 1$,
so $0 \leq r \leq 1$,
whence $0 \leq g \leq 1$.

Condition~(\ref{D-2717RqPf-Cut2})
follows from the choice of $c_1, c_2, \ldots, c_m$.

We verify condition~(\ref{D-2717RqPf-Cut3}).
Corollary~\ref{C_4926_InSub} implies that $(1 - g_0)^{1/2} c_j \in B$
for $j = 1, 2, \ldots, m$.
Since
$(1 - g_0)^{1/2} \in B$
and $1 - \frac{1}{n} \sum_{k = 1}^{n - 1} g_k  \in B$,
it now follows that $r c_j \in B$
for $j = 1, 2, \ldots, m$.
This is~(\ref{D-2717RqPf-Cut3}).

Now we prove (\ref{D-2717RqPf-Cut4}) and~(\ref{D-2717RqPf-Cut5a}).
Set
\begin{equation}\label{Eq:2724DefH}
s = 1 - (1 - g_0)^{1/2}
\andeqn
h = \frac{1}{n} \sum_{k = 0}^{n - 1} g_k.
\end{equation}
Then $s = f (g_0)$ for the \cfn\  $f \colon [0, 1] \to [0, 1]$
given by $f (\ld) = 1 - (1 - \ld)^{1/2}$.
Since $f (0) = 0$ and $g_0 \in D_0$,
it follows that $s \in D_0 \subset D_{n - 1}$.
Also
$h - \tfrac{1}{n} g_0 \in D_{n - 1}$.
Therefore
\begin{align*}
g
& = 1 - r
  = 1 - (1 - s) \big[ 1 - \big( h - \tfrac{1}{n} g_0 \big) \big] (1 - s)
\\
& = 2 s + h - \tfrac{1}{n} g_0
     - s \big( h - \tfrac{1}{n} g_0 \big) - s^2
     - \big( h - \tfrac{1}{n} g_0 \big) s
     + s \big( h - \tfrac{1}{n} g_0 \big) s
  \in D_{n - 1}.
\end{align*}
Now
$d = \sum_{u \in F_{n - 1}} u g_0 u^*$
satisfies ${\overline{d D_{n - 1} d}} = D_{n - 1}$.
In~$B$,
using Proposition 2.7(i) of~\cite{KR}
at the first step
and~(\ref{Eq_6213_g0Bzg0Ax0}) at the second step,
we now get
\[
g \subeq_B \sum_{u \in F_{n - 1}} u g_0 u^*
  \subeq_A \bigoplus_{u \in F_{n - 1}} u z u^*.
\]
For $u \in F_{n - 1}$, we have $z \subeq_B u^* z_0 u$
by (\ref{Eq_6213_zBz1}) and~(\ref{Eq_6213_z1Buz0}),
so $u z u^* \subeq_A z_0$.
Thus, using~(\ref{Eq_6213_z0By3}) at the last step,
\[
g \subeq_A \bigoplus_{u \in F_{n - 1}} u z u^*
  \subeq_A \bigoplus_{u \in F_{n - 1}} z_0
  \subeq_B \bigoplus_{u \in F_{n - 1}} y_3.
\]
Now
\[
y_3 \oplus \bigoplus_{u \in F_{n - 1}} y_3 \subeq_B y_2
\]
by~(\ref{Cd_6213_ManyOrth}).
Thus $y_3 \oplus g \subeq_A y_2$.
Corollary~5.8 of~\cite{Ph40}
implies that $B$ is stably large in~$A$.
So we can apply Lemma~6.5 of~\cite{Ph40},
with $a = g$,
$b = y_2$,
$c = \bigoplus_{u \in F_{n - 1}} y_3$,
and $x = y_3$,
to get $g \subeq_B y_2$.
Thus (\ref{Eq_6213_y2By1y2By}) gives $g \subeq_B y_2 \subeq_B y$.
By (\ref{Eq_6213_y2By1y2By})
and~(\ref{Eq_6213_y1Ay0})
we also get $g \subeq_B y_2 \subeq_B y_1 \subeq_A y_0$,
so~(\ref{Cd_6213_y0}) implies that
$\| (1 - g) x (1 - g) \| > 1 - \ep$,
and~(\ref{Eq_6213_y0Ax}) implies $g \subeq_A x$.

Finally,
we prove~(\ref{D-2717RqMod-Cut5}).
We first claim that
\begin{equation}\label{Eq_6213_SumCommEst}
\left\| \left[ 1 - g_0, \, 1 - \frac{1}{n} \sum_{k = 1}^{n - 1} g_k
                          \right] \right\|
     < 2 \ep_0.
\end{equation}
It is enough to prove that
\[
\left\| \left[ g_0, \, \frac{1}{n} \sum_{k = 1}^{n - 1} g_k
                          \right] \right\|
     < 2 \ep_0.
\]
The claim thus follows by using~(\ref{Eq_6214_AppId}) to get,
for $k = 1, 2, \ldots, n - 1$,
\[
\| g_k g_0 - g_0 g_k \|
  \leq \| g_k g_0 - g_0 \| + \| g_0 - g_0 g_k \|
  < \ep_0 + \ep_0
  = 2 \ep_0.
\]

Using (\ref{6214_Ep0_FCC}) and~(\ref{Eq_6213_SumCommEst}), we get
\[
\left\| \left[
  (1 - g_0)^{1/2}, \, 1 - \frac{1}{n} \sum_{k = 1}^{n - 1} g_k
                          \right] \right\|
     < \frac{\ep}{8}.
\]
Therefore
\begin{equation}\label{Eq:2724EstR}
\left\|
r - (1 - g_0) \left( 1 - \frac{1}{n} \sum_{k = 1}^{n - 1} g_k \right)
       \right\|
  < \frac{\ep}{8}.
\end{equation}

Recall the definition of~$h$ in~(\ref{Eq:2724DefH}).
We now claim that
\begin{equation}\label{Eq:2724gmh}
\| h - g \| < \frac{\ep}{4}.
\end{equation}
To prove this,
we use~(\ref{Eq:2724EstR}) and $g = 1 - r$ (from~(\ref{Eq_6214_Dfg}))
at the first step,
$1 \in F_1$ and~(\ref{Eq_6214_AppId}) at the third last step,
and~(\ref{6214_Ep0_e8}) at the second last step,
to estimate
\begin{align*}
\| h - g \|
& < \frac{\ep}{8}
   + \left\|
   1 - (1 - g_0) \left( 1 - \frac{1}{n} \sum_{k = 1}^{n - 1} g_k \right)
    - h
       \right\|
\\
& = \frac{\ep}{8}
    + \left\| g_0 + \frac{1}{n} \sum_{k = 1}^{n - 1} g_k
            - \frac{1}{n} \sum_{k = 1}^{n - 1} g_0 g_k
            - \frac{1}{n} \sum_{k = 0}^{n - 1} g_k \right\|
\\
& = \frac{\ep}{8}
    + \left\| \left( \frac{n - 1}{n} \right) g_0
            - \frac{1}{n} \sum_{k = 1}^{n - 1} g_0 g_k \right\|
  \leq \frac{\ep}{8}
     + \frac{1}{n} \sum_{k = 1}^{n - 1} \| g_0 - g_0 g_k \|
\\
&  < \frac{\ep}{8} + \ep_0
  \leq \frac{\ep}{8} + \frac{\ep}{8}
  = \frac{\ep}{4}.
\end{align*}
The claim is proved.

We next claim that $\| u h u^* - h \| \leq \frac{\ep}{2}$
for $u \in F$.
Set $g_{- 1} = 0$ and define
\[
e = \frac{1}{n} \sum_{k = 0}^{n - 1}
          u g_k^{1/2} u^* g_{k - 1} u g_k^{1/2} u^*
\andeqn
f = \frac{1}{n} \sum_{k = 0}^{n - 1}
          g_{k + 1}^{1/2} u g_k u^* g_{k + 1}^{1/2}.
\]
We will apply Lemma~\ref{L-2720Narrow} with
\[
a = \frac{1}{n} \sum_{k = 0}^{n - 2} g_k,
\,\,\,\,\,\,
b = \frac{1}{n} \sum_{k = 0}^{n} g_k,
\,\,\,\,\,\,
d = h,
\,\,\,\,\,\,
r = u h u^*,
\,\,\,\,\,\,
x = e,
\,\,\,\,\,\,
y = f,
\]
and with $\frac{2}{n}$ in place of~$\ep$
and $\frac{\ep}{4}$ in place of~$\rh$.
We verify its hypotheses.
The relation
\[
\frac{1}{n} \sum_{k = 0}^{n - 2} g_k
 \leq h
 \leq \frac{1}{n} \sum_{k = 0}^{n} g_k
\]
is clear.
We also have
\[
\left\| \frac{1}{n} \sum_{k = 0}^{n} g_k
            - \frac{1}{n} \sum_{k = 0}^{n - 2} g_k \right\|
 \leq \frac{1}{n} \big( \| g_{n - 1} \| + \| g_n \| \big)
 \leq \frac{2}{n}.
\]
The relation $f \leq \frac{1}{n} \sum_{k = 0}^{n} g_k$
follows from $g_0 \geq 0$
and
\[
g_{k + 1}^{1/2} u g_k u^* g_{k + 1}^{1/2}
  \leq \| u g_k u^* \| g_{k + 1}
  \leq g_{k + 1}.
\]
The proof of the inequality $e \leq u h u^*$ is similar.

We now check that
\begin{equation}\label{Eq:2724eDiff}
\left\| e - \frac{1}{n} \sum_{k = 0}^{n - 2} g_k \right\|
 \leq \frac{\ep}{4}.
\end{equation}
For $k = 1, 2, \ldots, n - 1$,
use (\ref{Eq_6214_AppId}) and~(\ref{6214_Ep0_RTAI})
at the second step to get
\[
\big\| u g_k^{1/2} u^* g_{k - 1} - g_{k - 1} \big\|
  = \big\| g_k^{1/2} u^* g_{k - 1} u - u^* g_{k - 1} u \big\|
  < \frac{\ep}{8}.
\]
Taking adjoints gives also
\[
\big\| g_{k - 1} u g_k^{1/2} u^* - g_{k - 1} \big\|
  < \frac{\ep}{8}.
\]
So
\[
\big\| u g_k^{1/2} u^* g_{k - 1} u g_k^{1/2} u^* - g_{k - 1} \big\|
   < \frac{\ep}{4}.
\]
Since $g_{- 1} = 0$,
we therefore have
\begin{align*}
\left\| e - \frac{1}{n} \sum_{k = 0}^{n - 2} g_k \right\|
& = \left\| \frac{1}{n} \sum_{k = 1}^{n - 1}
          u g_k^{1/2} u^* g_{k - 1} u g_k^{1/2} u^*
       - \frac{1}{n} \sum_{k = 1}^{n - 1} g_{k - 1} \right\|
\\
& \leq \frac{1}{n} \sum_{k = 1}^{n - 1}
     \big\| u g_k^{1/2} u^* g_{k - 1} u g_k^{1/2} u^* - g_{k - 1} \big\|
   < \frac{\ep}{4}.
\end{align*}
This is~(\ref{Eq:2724eDiff}).

Similar reasoning to that of the previous paragraph gives
\[
\big\| g_{k + 1}^{1/2} u g_k u^* g_{k + 1}^{1/2} - u g_k u^* \big\|
  < \frac{\ep}{4}
\]
for $k = 1, 2, \ldots, n$,
so that
\[
\| f - u h u^* \|
 \leq \frac{1}{n} \sum_{k = 0}^{n - 1}
  \big\| g_{k + 1}^{1/2} u g_k u^* g_{k + 1}^{1/2} - u g_k u^* \big\|
 < \frac{\ep}{4}.
\]
This completes the verification of the hypotheses of
Lemma~\ref{L-2720Narrow}.
Applying this lemma,
and using~(\ref{Eq_6213_1ne8}) at the second step,
we get
\[
\| u h u^* - h \| \leq \frac{2}{n} + \frac{\ep}{4} < \frac{\ep}{2}.
\]

Combining this estimate with~(\ref{Eq:2724gmh}),
for $u \in F$ we now have
\[
\| [g, u] \|
 = \| g - u g u^* \|
 \leq \| h - u h u^* \| + 2 \| g - h \|
 < \frac{\ep}{2} + 2 \left( \frac{\ep}{4} \right)
 = \ep.
\]
This completes the proof of condition~(\ref{D-2717RqMod-Cut5}),
and the proof of the theorem.
\end{proof}


\section{Replacing the matrix decomposition}\label{Sec:MatDec}

One frequently used technique in C*-algebras with many projections
is decomposing the identity into orthogonal projections
in order to create
a matrix decomposition of some element of interest.
Lemma~\ref{bStr1} below creates a decomposition of the identity
into three positive elements
$z_1$, $z_2$, and $z_3$ (which are not mutually orthogonal).
These elements are used to simulate a $3 \times 3$ matrix decomposition
of an element of the C*-algebra.
It is helpful to think of the following picture.

\includegraphics[height=40mm]{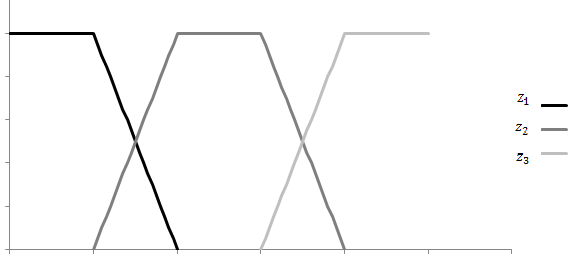}

Lemma~\ref{bcStr1} is a technical lemma
using two related decompositions
of the identity into three positive elements
and simulating a $5 \times 5$ matrix
decomposition
of an element of the C*-algebra.
Lemma~\ref{bcStr1} is used to prove Lemma~\ref{bStr1}.
The following is the motivational picture.

\includegraphics[height=40mm]{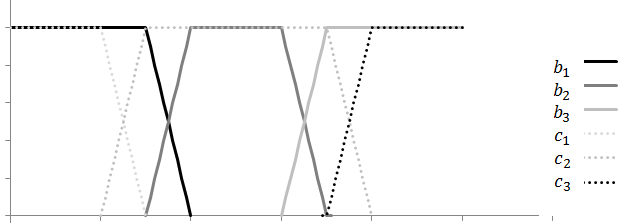}

The next lemma is used repeatedly,
but often implicitly, in the proof of Lemma~\ref{bcStr1}.

\begin{lem}\label{OrthHer}
If $y$ and $z$ are orthogonal positive elements
of a \ca~$A$ and $w \in {\overline{A y} }$
and $x \in {\overline{z A}}$,
then $w x = 0$.
\end{lem}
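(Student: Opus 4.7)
The plan is to unwind the definitions: orthogonality of the positive elements $y, z$ means $yz = zy = 0$, and the closures $\overline{Ay}$ and $\overline{zA}$ consist exactly of norm-limits of elements of the form $ay$ and $zb$ respectively, with $a, b \in A$. Then any product of the form $(ay)(zb)$ already equals $a(yz)b = 0$, and a routine $3\varepsilon$-argument will let this identity pass to the limit and yield $wx = 0$.

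Concretely, I would proceed as follows. Given $w \in \overline{Ay}$ and $x \in \overline{zA}$, choose sequences $(a_n)_{n=1}^\infty, (b_n)_{n=1}^\infty$ in $A$ with $a_n y \to w$ and $z b_n \to x$ in norm. Since $y \perp z$ (so $yz = 0$), we have $(a_n y)(z b_n) = a_n (yz) b_n = 0$ for every $n$. A standard estimate, for instance
\[
\| wx - (a_n y)(z b_n) \|
   \leq \| w - a_n y \| \cdot \| x \|
      + \| a_n y \| \cdot \| x - z b_n \|,
\]
together with the fact that $\| a_n y \|$ stays bounded (it converges to $\|w\|$), forces the left-hand side to tend to $0$. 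Hence $wx = 0$.

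There is essentially no obstacle here; the only thing to be careful about is verifying that the norm of $a_n y$ stays bounded so that the second term in the displayed estimate is controlled, which is automatic from the convergence $a_n y \to w$. Everything else is a direct consequence of $yz = 0$ and continuity of multiplication.
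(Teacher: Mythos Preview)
Your proof is correct and follows exactly the paper's approach: the paper's own proof simply says that the statement is true for $w \in A y$ and $x \in z A$, and then says ``Take limits.'' You have merely written out the details of that limiting argument explicitly.
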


\begin{proof}
The statement is true if $w \in A y$ and $x \in z A$.
Take limits.
\end{proof}

\begin{lem}\label{bcStr1}
Let $A$ be a finite unital \ca{} and let $a \in A$.
Let $B \subset A$ be a unital subalgebra with $\tsr (B) = 1$.
Suppose there exist positive elements
$b_1, b_2, b_3, c_1, c_2, c_3 \in B$ such that:
\begin{enumerate}
\item\label{bcStr1_1}
$b_1 + b_2 + b_3 = 1$.
\item\label{bcStr1_2}
$c_1 + c_2 + c_3 = 1$.
\item\label{bcStr1_3}
$C^* (b_1, b_2, b_3, c_1, c_2, c_3)$ is commutative.
\item\label{bcStr1_4}
$b_1 c_1 = c_1$.
\item\label{bcStr1_5}
$b_3 c_3 = c_3$.
\item\label{bcStr1_6}
$b_2 c_2 = b_2$.
\item\label{bcStr1_7}
$c_1 b_2 = c_3 b_2 = 0$.
\item\label{bcStr1_8}
$b_1 b_3 = 0$.
\item\label{bcStr1_9}
$c_2 a c_2 \in { \overline{c_2 B c_2 }}$.
\item\label{bcStr1_10}
$b_1 a = a b_3 = 0$.
\setcounter{TmpEnumi}{\value{enumi}}
\end{enumerate}
Then for all $\ep > 0$ there exists an invertible element $y \in A$
such that $\| a - y \| < \ep$.
\end{lem}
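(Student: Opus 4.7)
The plan is to construct $y$ by perturbing $a$ by a small element drawn from $B$. The perturbation has two roles: first, to replace the middle piece $c_2 a c_2$, which by condition~(\ref{bcStr1_9}) lies approximately in $c_2 B c_2$, with a genuinely invertible element of $B$; and second, to add a small ``diagonal'' contribution supported on $c_1+c_3$ so that the outer regions, where the structural identities $c_1 a = 0$ and $a c_3 = 0$ (which follow from $c_1 = b_1 c_1$, $c_3 = b_3 c_3$, and (\ref{bcStr1_10})) force the matrix picture of $a$ to have a vanishing row and column, become non-degenerate.

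Concretely, given $\epsilon > 0$, I would first use (\ref{bcStr1_9}) to find $\tilde b \in c_2 B c_2$ with $\|c_2 a c_2 - \tilde b\|$ very small. Pick a small $\lambda > 0$, and form $w_0 = \tilde b + \lambda(c_1 + c_3) \in B$. Since $\tsr(B) = 1$, there exists $w \in B$ that is invertible in $B$ with $\|w - w_0\|$ arbitrarily small. Define
\[
y = a + (w - c_2 a c_2),
\]
so that $\|y - a\| \leq \|w - w_0\| + \|\tilde b - c_2 a c_2\| + \lambda\|c_1 + c_3\|$, which can be made less than $\epsilon$ by taking $\lambda$ and the tolerances sufficiently small.

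The remaining task, and the real content of the proof, is to show $y$ is invertible in $A$. Because $A$ is finite, it suffices to prove $y$ is left invertible. Suppose $y z = 0$. Multiplying on the left by $c_1$ and using $c_1 a = 0$ should control the $c_1$-component of $z$; symmetrically, multiplying on the right by $c_3$ and using $a c_3 = 0$ handles the $c_3$-component. For the middle block one exploits that $w$ is invertible in $B$: the off-middle piece $a - c_2 a c_2$ should be expressible through the transitions $b_i c_j$, and together with $b_2 c_2 = b_2$ and $b_1 b_3 = 0$ this lets one absorb the outer cross terms into the invertible middle block and conclude $z = 0$.

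The principal obstacle is this last invertibility verification. The elements $b_i$ and $c_i$ are not projections, and products such as $b_1 c_2$, $c_1 c_2$, $b_2 c_3$ are in general nonzero on the transition zones between the three regions. This is precisely why the hypothesis includes \emph{two} partitions of unity $\{b_i\}$ and $\{c_j\}$, yielding the $5\times 5$ refinement hinted at by the motivational picture: they provide the double layer of cutoffs needed to keep track of the overlap errors. The matching relations (\ref{bcStr1_4})--(\ref{bcStr1_8}) are exactly what is needed to make the outer cutoffs interact with $a$ only through pieces that can be absorbed into the middle block, where the invertibility of $w$ is available.
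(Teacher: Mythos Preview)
Your proposal has a genuine gap at exactly the point you identify as ``the principal obstacle'': the invertibility of $y$ is never actually established, and the sketch you give cannot be made to work as stated. The difficulty is structural. You perturb $w_0 = \tilde b + \lambda(c_1+c_3) \in B$ to an invertible $w \in B$ using $\tsr(B)=1$, but this destroys all information about $w$ and, more importantly, about $w^{-1}$. To prove $y = w + (a - c_2 a c_2)$ is invertible you would need to show that $1 + w^{-1}(a - c_2 a c_2)$ is invertible, and for that you need the off-diagonal piece $a - c_2 a c_2$ to interact with $w^{-1}$ in a nilpotent fashion. But $w^{-1}$ is an arbitrary element of $B$; nothing forces it to respect the $c_i$-decomposition, so the ``lower-triangular with invertible diagonal'' heuristic breaks down. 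Your remark that the $b_i$ should help is correct in spirit, but you never say how they enter the argument for \emph{this} $y$, and indeed they cannot rescue it because the perturbation $w_0 \to w$ has already scrambled the block structure.

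The paper's proof avoids this trap by never perturbing inside $B$ itself. Instead it uses that hereditary subalgebras of $B$ inherit stable rank one, so each unitization $\overline{e B e}^{+} = \overline{e B e} + \C\cdot 1_A$ has stable rank one. An invertible element there has the form $t + \lambda\cdot 1$ with inverse $t' + \lambda^{-1}\cdot 1$ and $t,t' \in \overline{e B e}$; this preserved ``scalar plus corner'' form is exactly what makes the nilpotency calculations go through. The paper then builds $y$ not as $a$ plus a small correction, but as a \emph{product} of three explicitly invertible factors,
\[
y \;=\; \bigl(a_{3,1}+a_{4,1}+a_{5,1}+t_0\bigr)\, t_8\, \bigl(w+t_4\bigr),
\]
where $t_0, t_4, t_8$ are invertibles in unitized corners of $B$, and the outer factors are invertible because elements like $(a_{3,1}+a_{4,1}+a_{5,1})t_0^{-1}$ square to zero (a consequence of the orthogonality relations in the five-term decomposition $1 = c_1 + (b_1-c_1) + b_2 + (b_3-c_3) + c_3$). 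The estimate $\|y-a\|<\ep$ is then a separate computation. This three-factor structure, with invertibility coming from nilpotency rather than from a triangular block argument, is the missing idea.
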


\begin{proof}
The relations in the hypotheses imply:
\begin{equation}\label{C1B3}
c_1 b_3 = c_1 b_1 b_3 = 0.
\end{equation}
\begin{equation}\label{C1C3}
 c_1 c_3 = c_1 b_3 c_3 = 0.
\end{equation}
\begin{equation}\label{B1C3}
 b_1 c_3 = b_1 b_3 c_3 = 0.
\end{equation}
{}From~(\ref{bcStr1_1}) we get
\begin{equation}\label{Eq_4X04_Star}
1 = c_1 + (b_1 - c_1) + b_2 + (b_3 - c_3 ) + c_3.
\end{equation}
We wish to use this decomposition of the identity to decompose~$a$.
Therefore,
make the following definitions:
\begin{equation}\label{E_4920_a31}
a_{3, 1} = b_2 a c_1.
\end{equation}
\begin{equation}\label{E_4920_a32}
a_{3, 2} = b_2 a (b_1 - c_1).
\end{equation}
\begin{equation}\label{E_4920_a33}
a_{3, 3} = b_2 a b_2.
\end{equation}
\begin{equation}\label{E_4920_a41}
a_{4, 1} = (b_3 - c_3) a c_1.
\end{equation}
\begin{equation}\label{E_4920_a42}
a_{4, 2} = (b_3 - c_3 ) a (b_1 - c_1).
\end{equation}
\begin{equation}\label{E_4920_a43}
a_{4, 3} = (b_3 - c_3) a b_2.
\end{equation}
\begin{equation}\label{E_4920_a51}
a_{5, 1} = c_3 a c_1.
\end{equation}
\begin{equation}\label{E_4920_a52}
a_{5, 2} = c_3 a (b_1 - c_1).
\end{equation}
\begin{equation}\label{E_4920_a53}
a_{5, 3} = c_3 a b_2.
\end{equation}
Then
\begin{equation}\label{Eq_4925_aSum}
\sum_{j = 3}^5 \sum_{k = 1}^3 a_{j, k} = a
\end{equation}
by (\ref{Eq_4X04_Star})
and the hypotheses (\ref{bcStr1_10}),
(\ref{bcStr1_4}),
and~(\ref{bcStr1_5}).

We have,
using (\ref{bcStr1_4}) at the first step,
(\ref{bcStr1_2}) at the second step,
and~(\ref{B1C3}) at the third step,
\[
b_1 - c_1
 = b_1 (1 - c_1)
 = b_1 (c_2 + c_3)
 = b_1 c_2.
\]
Similarly $b_3 - c_3 = b_3 c_2$.
Also using~(\ref{bcStr1_6}),
it follows that
\[
a_{3, 2}, \, a_{3, 3}, \, a_{4, 2}, \, a_{4, 3}
 \in {\overline{c_2 B c_2}};
\]
in fact,
$a_{4, 3} \in {\overline{(b_3 - c_3) B b_2}}$.
Since
$b_2 c_2 = b_2$, we have
$b_2 a b_2 \in {\overline{b_2 B b_2}} \subset {\overline{c_2 B c_2}}$.
Furthermore,
since $B$ has stable rank one,
${\overline{b_2 B b_2}}$ and ${\overline{c_2 B c_2}}$
also have stable rank one.

Set
\[
\delta = \min \left(
  \frac{\ep}{6 \| a_{3, 1} + a_{4, 1} + a_{5, 1} \| }, \, \,
  \sqrt{ \frac{\ep}{6} }, \, \,
  \frac{\ep}{3} \right).
\]
Since $a_{3, 3} \in {\overline{ b_2 B b_2 }}$,
there is an invertible element
$t_0 \in {\overline{ b_2 B b_2 }} + \C \cdot 1_A$ with
\begin{equation}\label{t0a33}
\| t_0 - a_{3, 3} \| < \delta.
\end{equation}
Then there are $t_1, t_2 \in {\overline{ b_2 B b_2 }}$
and $\lambda_1 \in \C$
such that $t_0 = t_1 + \lambda_1 1_A$
and $t_0^{- 1} = t_2 + \lambda_1^{- 1} 1_A$.

We claim that
\begin{equation}\label{sqrdIs0}
\big( (a_{3, 1} + a_{4, 1} + a_{5, 1}) t_0^{- 1} \big)^2 = 0.
\end{equation}
To prove the claim, we first observe that,
using the hypothesis~(\ref{bcStr1_7}) and Lemma~\ref{OrthHer}
at the second step, we get
\[
( a_{3, 1} + a_{4, 1} + a_{5, 1}) t_0
 = (a_{3, 1} + a_{4, 1} + a_{5, 1}) (t_1 + \lambda_1 )
 = (a_{3, 1} + a_{4, 1} + a_{5, 1}) \lambda_1.
\]
Therefore
\[
a_{3, 1} + a_{4, 1} + a_{5, 1}
 = (a_{3, 1} + a_{4, 1} + a_{5, 1}) t_0 t_0^{- 1}
 = (a_{3, 1} + a_{4, 1} + a_{5, 1}) \lambda_1 t_0^{- 1}.
\]
Using (\ref{C1B3}), (\ref{C1C3}), and the hypothesis~(\ref{bcStr1_7})
at the second step,
this implies
\[
\big( (a_{3, 1} + a_{4, 1} + a_{5, 1}) t_0^{- 1} \big)^2
 = (a_{3, 1} + a_{4, 1} + a_{5, 1})^2 \lambda_1^{- 2}
 = 0.
\]
The claim is proved.

Now we compute
\begin{align}\label{1stInv}
& \left(a_{3, 1} + a_{4, 1} + a_{5, 1} + t_0 \right)
 t_0^{- 1} \left[ 1 - \left(a_{3, 1} + a_{4, 1} + a_{5, 1}\right)
 t_0^{- 1} \right]
\\
& \alignInd = \left(a_{3, 1} + a_{4, 1} + a_{5, 1}\right) t_0^{- 1}
 - \left(\left(a_{3, 1} + a_{4, 1} + a_{5, 1}\right) t_0^{- 1} \right)^2
\notag
\\
& \alignInd \alignInd
 + 1 - \left(a_{3, 1} + a_{4, 1} + a_{5, 1} \right) t_0^{- 1}
\notag
\\
& \alignInd = 1.
\notag
\end{align}
Because $A$ is finite,
every one sided invertible element is invertible.
So $a_{3, 1} + a_{4, 1} + a_{5, 1} + t_0$ is invertible,
with
\begin{equation}\label{Eq_I1}
\big( a_{3, 1} + a_{4, 1} + a_{5, 1} + t_0 \big)^{-1}
= t_0^{- 1}
  \left[ 1 - \left( a_{3, 1} + a_{4, 1} + a_{5, 1} \right)
     t_0^{- 1} \right].
\end{equation}

Using our expression for $t_0^{- 1}$ in the first step
and using Lemma~\ref{OrthHer}
twice in the second step,
once with $y = c_1$ and $z = b_2$
and once with $y = c_1$ and $z = b_3$ (using~(\ref{C1B3})),
we compute
\begin{equation}\label{Eq_4924Star}
\left( a_{3, 1} + a_{4, 1} + a_{5, 1} \right)
      t_0^{- 1} b_3
 = \left( a_{3, 1} + a_{4, 1} + a_{5, 1} \right)
     \left(t_2 + \lambda_1^{- 1}\right) b_3
 = 0.
\end{equation}
Similarly,
\begin{equation}\label{Eq_4924StarStar}
\left( a_{3, 1} + a_{4, 1} + a_{5, 1} \right) t_0^{- 1} b_2
 = 0.
\end{equation}
{}From (\ref{Eq_4924Star}) and the hypothesis~(\ref{bcStr1_5}),
we get
\begin{equation}\label{Eq_4924ThreeStar}
(a_{3, 1} + a_{4, 1} + a_{5, 1}) t_0^{- 1} c_3 = 0
\andeqn
(a_{3, 1} + a_{4, 1} + a_{5, 1}) t_0^{- 1} (b_3 - c_3) = 0.
\end{equation}

Next we multiply,
using (\ref{Eq_I1}) at the first step,
and at the last step using $t_2 \in {\overline{b_2 B b_2}}$
together with the hypothesis~(\ref{bcStr1_7})
on the terms $t_2 a_{5, 2}$ and $t_2 a_{5, 3}$,
and using (\ref{Eq_4924StarStar}) and (\ref{Eq_4924ThreeStar})
on the terms starting with $t_0^{- 1}$:
\begin{align}
& t_0^{- 1} \left[ 1 - \left( a_{3, 1} + a_{4, 1} + a_{5, 1} \right)
 t_0^{- 1} \right]
\label{InverseXAdjustment1}
\\
& \alignInd \alignInd
  \cdot \big( a_{3, 1} + a_{3, 2} + t_0 + a_{4, 1}
  + a_{4, 2} + a_{4, 3} + a_{5, 1} + a_{5, 2} + a_{5, 3} \big)
\notag
\\
& \alignInd
 = 1 + t_0^{- 1}
    \left[ 1 - \left( a_{3, 1} + a_{4, 1} + a_{5, 1} \right)
       t_0^{- 1} \right]
\notag
\\
& \alignInd \alignInd
 \cdot \big( a_{3, 2} + a_{4, 2} + a_{4, 3} + a_{5, 2} + a_{5, 3} \big)
\notag
\\
& \alignInd
 = 1 + \left[ t_2 + \lambda_1^{- 1}
    - t_0^{- 1} \left( a_{3, 1} + a_{4, 1} + a_{5, 1} \right)
       t_0^{- 1} \right]
\notag
\\
& \alignInd  \alignInd
   \cdot
   \big( a_{3, 2} + a_{4, 2} + a_{4, 3} + a_{5, 2} + a_{5, 3} \big)
\notag
\\
& \alignInd
 = 1 + t_2 a_{3, 2} + t_2 a_{4, 2} + t_2 a_{4, 3}
       + t_2 a_{5, 2} + t_2 a_{5, 3}
\notag
\\
& \alignInd \alignInd
 + \lambda_1^{- 1} a_{3, 2} + \lambda_1^{- 1} a_{4, 2}
 + \lambda_1^{- 1} a_{4, 3} + \lambda_1^{- 1} a_{5, 2}
 + \lambda_1^{- 1} a_{5, 3}
\notag
\\
& \alignInd \alignInd
 - t_0^{- 1} (a_{3, 1} + a_{4, 1} + a_{5, 1}) t_0^{- 1}  a_{3, 2}
\notag
\\
& \alignInd \alignInd
 - t_0^{- 1} (a_{3, 1} + a_{4, 1} + a_{5, 1}) t_0^{- 1}
       (a_{4, 2} + a_{4, 3} )
\notag
\\
& \alignInd \alignInd
  - t_0^{- 1} (a_{3, 1} + a_{4, 1} + a_{5, 1}) t_0^{- 1}
     (a_{5, 2} + a_{5, 3})
\notag
\\
& \alignInd
 = 1 + t_2 a_{3, 2} + t_2 a_{4, 2} + t_2 a_{4, 3}
  + \lambda_1^{- 1} a_{3, 2}
\notag
\\
& \alignInd \alignInd
  + \lambda_1^{- 1} a_{4, 2} + \lambda_1^{- 1} a_{4, 3}
  + \lambda_1^{- 1} a_{5, 2} + \lambda_1^{- 1} a_{5, 3}.
\notag
\end{align}

Set $t_3 = t_2 a_{4, 3} + 1$.
Since $t_2 \in {\overline {b_2 B b_2 }}$ and
$a_{4, 3} \in {\overline {(b_3 - c_3) B b_2 }}$,
we have $t_3 \in {\overline {b_2 B b_2 }} + \C \cdot 1_A$.
Thus there is an invertible element
$t_4 \in {\overline {b_2 B b_2 }} + \C \cdot 1_A$ with
\begin{equation}\label{t4tot3}
\| t_4 - t_3 \| < \delta.
\end{equation}
So there are $t_5, t_6 \in {\overline {b_2 B b_2 }}$
and $\lambda_5 \in \C$
such that $t_4 = t_5 + \lambda_5 1_A$
and $t_4^{- 1} = t_6 + \ld_5^{- 1} 1_A$.

An argument similar to that for Equation~(\ref{sqrdIs0}),
using (\ref{C1C3}), (\ref{B1C3}), and the hypothesis~(\ref{bcStr1_7})
gives
$\big( t_4^{- 1} ( a_{5, 2} + a_{5, 3} ) \big)^2 = 0$.
Define
\[
w = \ld_1^{- 1} a_{5, 2} + \ld_1^{- 1} a_{5, 3}.
\]
Thus
\begin{equation}\label{Eq_4724_N1}
\big( t_4^{- 1} w \big)^2 = 0.
\end{equation}
A computation similar to the one for Equation~(\ref{1stInv})
shows that $w + t_4$ is invertible,
with
\begin{equation}\label{inverseto2ndadj}
( w + t_4 )^{- 1}
 = \big( 1 - t_4^{- 1} w \big) t_4^{- 1}.
\end{equation}
Applying Lemma~\ref{OrthHer},
and using (\ref{B1C3}),
the hypothesis~(\ref{bcStr1_7}),
and the relations
\[
t_6 \in {\overline {b_2 B b_2 }},
\,\,\,\,\,\,
b_1 \in {\overline{ b_1 B b_1}},
\andeqn
a_{5, 2} + a_{5, 3} \in {\overline{ c_3 A}},
\]
we get
\begin{equation}\label{Zeros}
b_1 t_4^{- 1} w
= b_1 t_6 w + b_1 \ld_5^{- 1} w
 = 0.
\end{equation}
Similarly,
\begin{equation}\label{Eq_4724_N3}
b_2 t_4^{- 1} w = 0.
\end{equation}
Hypothesis~(\ref{bcStr1_4})
and the relation~(\ref{Zeros}) also imply that
\begin{equation}\label{Eq_4724_N2}
c_1 t_4^{- 1} w = 0
\andeqn
(b_1 - c_1) t_4^{- 1} w = 0.
\end{equation}

Define
\begin{align*}
t_7
& = 1 + t_2 a_{3, 2} t_6 + t_2 a_{4, 2} t_6
     + \ld_1^{- 1} a_{3, 2} t_6 + \ld_1^{- 1} a_{4, 2} t_6
     + \ld_1^{- 1} a_{4, 3} t_6
\\
& \alignInd + \ld_5^{- 1} t_2 a_{3, 2} + \ld_5^{- 1} t_2 a_{4, 2}
     + \ld_1^{- 1} \ld_5^{- 1} a_{3, 2}
     + \ld_1^{- 1} \ld_5^{- 1} a_{4, 2}
     + \ld_1^{- 1} \ld_5^{- 1} a_{4, 3}.
\end{align*}
One checks that
$t_7 \in {\overline {c_2 B c_2}} + \C \cdot 1_A$,
which has stable rank one by hypothesis.
Thus there exists an invertible element
$t_8 \in {\overline {c_2 B c_2 }} + \C \cdot 1_A$ such that
\begin{equation}\label{t7tot8}
\| t_8 - t_7 \| < \frac{\delta}{1 + \| w \|}.
\end{equation}%
Set
\[
y = \left( a_{3, 1} + a_{4, 1} + a_{5, 1} + t_0 \right) t_8
    ( w + t_4 ).
\]
Then $y$ is invertible
by (\ref{Eq_I1}) and~(\ref{inverseto2ndadj}).

We claim that $\| a - y \| < \ep$.
To prove the claim,
we begin with the following computation.
In the first step we multiply out,
cancel the terms $\pm w t_4^{-1}$,
and use~(\ref{Eq_4724_N1})
to see that $w t_4^{-1} w t_4^{-1} = 0$.
The second step uses
(\ref{Eq_4724_N3}) and~(\ref{Eq_4724_N2})
to see that the terms involving $t_4^{-1} w t_4^{-1}$ vanish.
The third step is obtained by substituting
$t_4^{-1} = t_6 + \ld_5^{-1} 1_A$
and comparing with the definition of~$t_7$.
We have:
\begin{align}
& \left( t_2 a_{3, 2} + t_2 a_{4, 2} + t_4 + \ld_1^{- 1} a_{3, 2}
   + \ld_1^{- 1} a_{4, 2}
   + \ld_1^{- 1} a_{4, 3} + w \right)
 \cdot \left[ 1 - t_4^{- 1} w \right] t_4^{- 1}
     \label{BigMult}
\\
& \alignInd
  = 1 + t_2 a_{3, 2} t_4^{- 1} + t_2 a_{4, 2} t_4^{- 1} + \ld_1^{- 1} a_{3, 2} t_4^{- 1}
 + \ld_1^{- 1} a_{4, 2} t_4^{- 1} + \ld_1^{- 1} a_{4, 3} t_4^{- 1}
\notag
\\
& \alignInd \alignInd
 - t_2 a_{3, 2} t_4^{- 1} w t_4^{- 1}
   - t_2 a_{4, 2} t_4^{- 1} w t_4^{- 1}
\notag
\\
& \alignInd \alignInd
   - \ld_1^{- 1} a_{3, 2} t_4^{- 1} w t_4^{- 1}
  - \ld_1^{- 1} a_{4, 2} t_4^{- 1} w t_4^{- 1}
  - \ld_1^{- 1} a_{4, 3} t_4^{- 1} w t_4^{- 1}
\notag
\\
& \alignInd
 = 1 + t_2 a_{3, 2} t_4^{- 1} + t_2 a_{4, 2} t_4^{- 1}
     + \ld_1^{- 1} a_{3, 2} t_4^{- 1} + \ld_1^{- 1} a_{4, 2} t_4^{- 1}
     + \ld_1^{- 1} a_{4, 3} t_4^{- 1}
\notag
\\
& \alignInd
 = t_7.
\notag
\end{align}
Next,
using (\ref{inverseto2ndadj}) at the first step,
(\ref{BigMult}) at the second step,
and the choices of $t_4$ and $t_8$ at the third step,
we get
\begin{align}\label{Partial}
& \big\| 1 + t_2 a_{3, 2} + t_2 a_{4, 2} + t_2 a_{4, 3}
    + \ld_1^{- 1} a_{3, 2} + \ld_1^{- 1} a_{4, 2}
    + \ld_1^{- 1} a_{4, 3}
    + w - t_8 (w + t_4 ) \big\|
\\
\notag
& \alignInd
\leq \| 1 + t_2 a_{4, 3} - t_4 \|
\\
\notag
& \alignInd \alignInd
  + \big\| \big( t_2 a_{3, 2} + t_2 a_{4, 2} + t_4
    + \ld_1^{- 1} a_{3, 2} + \ld_1^{- 1} a_{4, 2}
    + \ld_1^{- 1} a_{4, 3} + w \big)
\\
\notag
& \alignInd \alignInd \alignInd \alignInd
  \cdot \big(1 - t_4^{- 1} w \big) t_4^{- 1} (w + t_4)
\\
\notag
& \alignInd \alignInd \alignInd
     - t_8 \big( w + t_4 \big) \big\|
\\
\notag
& \alignInd
 \leq \| 1 + t_2 a_{4, 3} - t_4 \|
        + \big\| t_7 - t_8 \big\| \cdot \| w + t_4 \|
\\
\notag
& \alignInd
\leq \dt
  + \left( \frac{\delta}{\| w + t_4 \| + 1} \right) \| w + t_4 \|
< 2 \dt.
\end{align}
Then we compute as follows,
using (\ref{Eq_4925_aSum}) and the definitions of $y$ and $w$
at the first step,
(\ref{1stInv}) at the second step,
(\ref{InverseXAdjustment1}) and the definition of $w$ at the third step,
and~(\ref{Partial}) at the fourth step,
\begin{align*}
\| a - y \|
&
= \big\| a_{3, 1} + a_{3, 2} + a_{3, 3} + a_{4, 1} + a_{4, 2}
    + a_{4, 3} + a_{5, 1} + \ld_1 w
\\
& \alignInd
 - (a_{3, 1} + a_{4, 1} + a_{5, 1} + t_0) t_8 (w + t_4 ) \big\|
\\
&
\leq \| a_{3, 3} - t_0 \|
\\
& \alignInd
 + \big\| \big( a_{3, 1} + a_{4, 1} + a_{5, 1} + t_0 \big) t_0^{- 1}
   \big[ 1 - (a_{3, 1} + a_{4, 1} + a_{5, 1}) t_0^{- 1} \big]
\\
& \alignInd \alignInd \alignInd
 \cdot \big( a_{3, 1} + a_{3, 2} + t_0 + a_{4, 1} + a_{4, 2}
   + a_{4, 3} + a_{5, 1} + \ld_1 w \big)
\\
& \alignInd \alignInd
 - ( a_{3, 1} + a_{4, 1} + a_{5, 1} + t_0) t_8
    ( w + t_4 ) \big\|
\\
&
\leq \delta + \| a_{3, 1} + a_{4, 1} + a_{5, 1} + t_0 \|
\\
& \alignInd
  \cdot \| 1 + t_2 a_{3, 2} + t_2 a_{4, 2} + t_2 a_{4, 3}
    + \ld_1^{- 1} a_{3, 2} + \ld_1^{- 1} a_{4, 2}
    + \ld_1^{- 1} a_{4, 3}
\\
& \alignInd \alignInd
    + w - t_8 (w + t_4 ) \|
\\
&
 \leq \delta + 2 \dt \| a_{3, 1} + a_{4, 1} + a_{5, 1} + t_0 \|
\\
&
 \leq \delta
  + 2 \delta \| a_{3, 1} + a_{4, 1} + a_{5, 1} + a_{3, 3} \|
  + 2 \delta \| a_{3, 3} - t_0 \|
 < \frac{\ep}{3} + \frac{2 \ep}{6} + 2 \delta^2
 \leq \ep.
\end{align*}
This proves the claim and completes the proof.
\end{proof} 

\begin{lem}\label{StillIn}
Let $A$ be a \ca{} and let $B \subset A$ be a subalgebra.
Suppose $a \in A$ and $b \in B_{+}$ satisfy $b a b \in B$.
Let $f \colon \spec (b) \to [0, \infty)$
be a continuous function vanishing at zero.
Then $f (b) a f (b) \in {\overline{f (b) B f (b)}}$.
\end{lem}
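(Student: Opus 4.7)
The strategy is to show that $f(b) a f(b)$ is approximated by elements of the form $f(b) x f(b)$ with $x \in B$, by introducing a cutoff of $b$ away from zero. The key intermediate claim is: for every continuous function $h \colon \spec(b) \to [0, \infty)$ with $h(0) = 0$, one has $h(b) a h(b) \in B$.

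To prove the claim, rewrite the hypothesis as $b \cdot (a b) \in B$. Corollary~\ref{C_4926_InSub}, applied with $E = B$, the positive element $b$, the element $a b$ in place of $a$, and the function $h$, yields $h(b) \cdot (a b) = h(b) a b \in B$. Taking adjoints gives $b \cdot (a^* h(b)) = b a^* h(b) \in B$, and a second application of Corollary~\ref{C_4926_InSub} (now with $a^* h(b)$ in place of $a$) gives $h(b) a^* h(b) \in B$. Taking adjoints once more gives $h(b) a h(b) \in B$, as claimed.

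For the main statement, let $\ep > 0$. Since $f$ is continuous with $f(0) = 0$, choose $\delta > 0$ so that $|f(\lambda)| < \ep$ for every $\lambda \in \spec(b) \cap [0, \delta]$, and pick a continuous function $h \colon [0, \infty) \to [0, 1]$ with $h(0) = 0$ and $h(\lambda) = 1$ for $\lambda \geq \delta$. Then $|f(\lambda)(1 - h(\lambda))| \leq \ep$ on $\spec(b)$, so $\|f(b) - f(b) h(b)\| \leq \ep$. By the claim, $h(b) a h(b) \in B$, and since $f(b)$ and $h(b)$ commute, $(f(b) h(b)) a (h(b) f(b)) = f(b) [h(b) a h(b)] f(b) \in f(b) B f(b)$. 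A standard estimate then gives $\|f(b) a f(b) - f(b) h(b) a h(b) f(b)\| \leq 2 \ep \|a\| \cdot \|f(b)\|$. Letting $\ep \to 0$ shows that $f(b) a f(b)$ lies in $\overline{f(b) B f(b)}$.

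The main obstacle is the two-sided nature of the conclusion: Corollary~\ref{C_4926_InSub} is one-sided, so one must apply it twice, passing through the adjoint in between, to sandwich $a$ between $h(b)$ on both sides. Everything else is routine functional calculus.
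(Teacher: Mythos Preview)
Your proof is correct and follows essentially the same strategy as the paper: first show that $g(b)\,a\,g(b)\in B$ for continuous $g$ vanishing at~$0$, then sandwich with $f(b)$ and use $f(b)g(b)\to f(b)$. The only cosmetic difference is that the paper takes $g(\lambda)=\lambda^{1/n}$ and dismisses the first step as ``a standard polynomial approximation argument,'' whereas you prove it for general $h$ by two applications of Corollary~\ref{C_4926_InSub} (with an adjoint in between) and then use a bump function for the cutoff---arguably a cleaner and more explicit route to the same conclusion.
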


\begin{proof}
We have $b^{1/n} a b^{1/n} \in B$
by a standard polynomial approximation argument.
Also,
for all $k \in \N$ we have $\lim_{n \to \infty} b^k b^{1 / n} = b^k$.
Therefore polynomial approximation gives
$\lim_{n \to \infty} f (b) b^{1 / n} = f (b)$.
Similarly $\lim_{n \to \infty} b^{1 / n} f (b) = f (b)$.
The result follows.
\end{proof}

\begin{lem}\label{bStr1}
Let $A$ be a finite unital \ca{}
and let $a \in A$.
Let $B \subset A$ be a unital subalgebra with $\tsr (B) = 1$.
Suppose there exist
$z_{1}, z_{2}, z_{3} \in B$
such that:
\begin{enumerate}
\item\label{26640}
$z_{1}, z_{2}, z_{3} \geq 0$.
\item\label{26641}
$z_{1} + z_{2} + z_{3} = 1$.
\item\label{26642}
$z_{1} z_{3} = 0$.
\item\label{26643}
$z_{1} a = 0$.
\item\label{26644}
$a z_{3} = 0$
\item\label{26645}
$z_2 a z_2 \in B$.
\end{enumerate}
Then for all $\ep > 0$ there exists
an invertible element $y \in A$
such that $\| y - a \| < \ep$.
\end{lem}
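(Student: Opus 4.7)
The plan is to reduce to Lemma~\ref{bcStr1} by manufacturing the six elements $b_1,b_2,b_3,c_1,c_2,c_3$ from $z_1,z_2,z_3$ via continuous functional calculus. The point is that $z_1 z_3 = 0$ together with positivity of $z_1, z_3$ forces $z_1$ and $z_3$ to commute, and since $z_2 = 1 - z_1 - z_3$, the unital C*-algebra $C^*(z_1,z_2,z_3,1) \subset B$ is commutative. All the $b_i$ and $c_i$ will live in this commutative subalgebra, so the commutativity condition (\ref{bcStr1_3}) of Lemma~\ref{bcStr1} will be automatic.

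Concretely, I would fix a small parameter $\ep_0 > 0$ and continuous functions $f, h \colon [0,1] \to [0,1]$ with $f(0) = h(0) = 0$, $f \equiv 1$ on $[\ep_0, 1]$, $h \equiv 0$ on $[0, \ep_0]$, and $h \equiv 1$ on $[2\ep_0, 1]$, so that $fh = h$. Then set $b_j = f(z_j)$ and $c_j = h(z_j)$ for $j = 1, 3$, and $b_2 = 1 - b_1 - b_3$, $c_2 = 1 - c_1 - c_3$. Because $z_1 z_3 = 0$, on any character of the commutative algebra at most one of $z_1, z_3$ is nonzero, so $f(z_1) + f(z_3) \leq 1$ and $h(z_1) + h(z_3) \leq 1$; hence $b_2, c_2 \geq 0$ and conditions~(\ref{bcStr1_1})--(\ref{bcStr1_3}) and~(\ref{bcStr1_8}) of Lemma~\ref{bcStr1} hold immediately. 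The identity $fh = h$, combined with orthogonality of $z_1$ and $z_3$, yields conditions (\ref{bcStr1_4})--(\ref{bcStr1_7}) by straightforward algebra. For condition~(\ref{bcStr1_10}), note that $f(z_j) \in \overline{z_j A z_j}$ (since $f(0) = 0$), so $b_1 a = 0$ follows from $z_1 a = 0$ and $a b_3 = 0$ from $a z_3 = 0$.

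The main obstacle is condition~(\ref{bcStr1_9}), namely $c_2 a c_2 \in \overline{c_2 B c_2}$. The key observation is that, evaluated on any character, $z_1 z_3 = 0$ forces at least one of $z_1, z_3$ to vanish; in either case the surviving variable equals $1 - z_2$, and $c_2 = 1 - h(1 - z_2)$ (using $h(0) = 0$). Thus in the commutative algebra $C^*(z_1, z_2, z_3, 1)$ we have the identity $c_2 = \phi(z_2)$ with $\phi(t) = 1 - h(1-t)$, and $\phi(0) = 1 - h(1) = 0$. Since the hypothesis provides $z_2 a z_2 \in B$, Lemma~\ref{StillIn}, applied with $b = z_2$ and $f = \phi$, gives $c_2 a c_2 = \phi(z_2) a \phi(z_2) \in \overline{\phi(z_2) B \phi(z_2)} = \overline{c_2 B c_2}$.

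With all ten hypotheses of Lemma~\ref{bcStr1} verified, I would invoke that lemma directly to produce an invertible $y \in A$ with $\|y - a\| < \ep$. I expect the only nontrivial part of this to be the identification $c_2 = \phi(z_2)$ and the appeal to Lemma~\ref{StillIn}; the remainder is routine manipulation of cutoff functions in a commutative C*-algebra.
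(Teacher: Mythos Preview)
Your proposal is correct and follows essentially the same route as the paper: refine the tripartition $z_1, z_2, z_3$ via functional calculus in the commutative algebra $C^*(z_1,z_3,1)$, verify the hypotheses of Lemma~\ref{bcStr1}, and invoke Lemma~\ref{StillIn} for condition~(\ref{bcStr1_9}). The only cosmetic difference is that the paper chooses its two cutoff functions $f,h$ so that $f(\lambda)+h(1-\lambda)=1$ identically and defines $b_2=f(z_2)$, $c_2=h(z_2)$ directly, whereas you define $b_2,c_2$ by subtraction and then identify $c_2=\phi(z_2)$ pointwise; either way the key step $c_2 a c_2 \in \overline{c_2 B c_2}$ comes from Lemma~\ref{StillIn}.
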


\begin{proof}
We verify the hypotheses of Lemma~\ref{bcStr1}.
Let $f, h \colon [0, 1] \to [0, 1]$
be defined by the formulas
\[
f (\ld) = \begin{cases}
     0         & \ld \in \big[ 0, \tfrac{1}{2} \big]
      \\
     2 \ld - 1 & \ld \in \big[ \tfrac{1}{2}, 1 \big]
\end{cases}
\,\,\,\,\,\,
\andeqn
\,\,\,\,\,\,
h (\ld) = \begin{cases}
    2 \ld & \ld \in \big[ 0, \tfrac{1}{2} \big]
     \\
     1    & \ld \in \big[ \tfrac{1}{2}, 1 \big].
\end{cases}
\]
Then $f h = f$.

Define positive elements
$b_{1}, b_{2}, b_{3}, c_{1}, c_{2}, c_{3} \in A$
by
\[
b_{1} = h (z_{1}),
\,\,\,\,\,\,
b_{2} = f (z_{2}),
\,\,\,\,\,\,
b_{3} = h (z_{3}),
\,\,\,\,\,\,
c_{1} = f (z_{1}),
\,\,\,\,\,\,
c_{2} = h (z_{2}),
\,\,\,\,\,\,
c_{3} = f (z_{3}).
\]
Set $D = C^* (z_1, z_2, z_3)$.
Since $D = C^* (1, z_1, z_3)$ and $z_1 z_3 = 0$,
we see that $D$ is commutative.
So there is a compact metric space~$X$
and an isomorphism $D \cong C (X)$,
which we treat as an identification.
It also follows that
$C^* (b_{1}, b_{2}, b_{3}, c_{1}, c_{2}, c_{3}) \subset D$
is commutative.
We have
$c_2 a c_2 \in {\overline{c_2 B c_2}}$
by Lemma~\ref{StillIn}.

Identifying $D = C (X)$ as above,
for $j = 1, 2, 3$ set
\[
U_j = \big\{ x \in X \colon z_j (x) > 0 \big\}
\andeqn
V_j = \big\{ x \in X \colon z_j (x) > \tfrac{1}{2} \big\}.
\]
{}From the definitions of $f$ and $h$,
we see that if $\ld_1, \ld_2 \in [0, 1]$ satisfy $\ld_1 + \ld_2 = 1$,
then $f (\ld_1) + h (\ld_2) = 1$.
Since $U_1 \cap U_3 = \varnothing$
and $z_1 + z_2 + z_3 = 1$,
it is now easily checked that
\[
b_1 + b_2 + b_3 = 1,
\,\,\,\,\,\,
c_1 + c_2 + c_3 = 1,
\andeqn
b_1 b_3 = 0.
\]
The relation $f h = f$
implies
\[
b_1 c_1 = c_1,
\,\,\,\,\,\,
b_2 c_2 = b_2,
\andeqn
b_3 c_3 = c_3.
\]

For $x \in X$,
we have $c_1 (x) \neq 0$ \ifo{} $x \in V_1$
and $b_2 (x) \neq 0$ \ifo{} $x \in V_2$.
Clearly $V_1 \cap V_2 = \varnothing$,
so $c_1 b_2 = 0$.
Similarly $c_3 b_2 = 0$.
We have $b_1 a = 0$ by Lemma~\ref{OrthHer}
and similarly $a b_3 = 0$.
This completes the verification of the hypotheses of Lemma~\ref{bcStr1}.
\end{proof} 


\section{Centrally large subalgebras and stable rank
 one}\label{Sec:RqTsr1}

In this section we will prove the main theorem
(Theorem~\ref{tsr1GoesUpExtraLarge}),
which says that a \ca{} with a
centrally large subalgebra
which has stable rank one
must itself have stable rank one.
If the subalgebra
also has real rank zero,
we further show that
the containing algebra does as well.

The proof of Theorem~\ref{tsr1GoesUpExtraLarge}
proceeds via two technical lemmas.
The first is a version of the definition of a centrally large subalgebra
in which the element $g$ is replaced by a tower of elements.

\begin{lem}\label{L_9430_LgTower}
Let $A$ be an in\fd{} simple separable unital \ca,
and let $B \subset A$ be a centrally large subalgebra.
Then for all $m, N \in \N$,
$a_1, a_2, \ldots, a_m \in A$,
$\ep > 0$, $x \in A_{+}$ with $\| x \| = 1$,
and $y \in B_{+} \SM \{ 0 \}$,
there are $c_1, c_2, \ldots, c_m \in A$
and $g_0, g_1, \ldots, g_N \in B$
such that:
\begin{enumerate}
\item\label{9430_LgTower-Cut1}
$0 \leq g_n \leq 1$ for $n = 0, 1, \ldots, N$
and $g_{n - 1} g_n = g_n$ for $n = 1, 2, \ldots, N$.
\item\label{9430_LgTower-Cut2}
For $j = 1, 2, \ldots, m$ we have
$\| c_j - a_j \| < \ep$.
\item\label{9430_LgTower-Cut3}
For $j = 1, 2, \ldots, m$
and $n = 0, 1, \ldots, N$, we have
$(1 - g_n) c_j \in B$.
\item\label{9430_LgTower-Cut4}
For $n = 0, 1, \ldots, N$, we have
$g_n \subeq_B y$ and $g_n \subeq_A x$.
\item\label{9430_LgTower-Cut5a}
For $n = 0, 1, \ldots, N$, we have
$\| (1 - g_n) x (1 - g_n) \| > 1 - \ep$.
\item\label{9430_LgTower-Cut5}
For $j = 1, 2, \ldots, m$
and $n = 0, 1, \ldots, N$, we have
$\| g_n a_j - a_j g_n \| < \ep$.
\end{enumerate}
\end{lem}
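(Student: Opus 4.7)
The plan is to apply the centrally large property of $B$ in $A$ \emph{once} to produce a single element $g \in B$ satisfying Definition~\ref{D_6213_Large}, and then manufacture the whole tower $g_0, g_1, \ldots, g_N$ by continuous functional calculus on $g$. Concretely, I would fix continuous piecewise-linear cutoff functions $f_0, f_1, \ldots, f_N \colon [0,1] \to [0,1]$ with $f_n(0) = 0$ and $f_n(1) = 1$, arranged with nested supports so that $f_{n-1}(\ld) = 1$ whenever $f_n(\ld) > 0$ for each $n \geq 1$. This forces the pointwise identity $f_{n-1} f_n = f_n$, which under functional calculus gives exactly the tower relation $g_{n-1} g_n = g_n$ required in~(\ref{9430_LgTower-Cut1}).

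Before invoking Definition~\ref{D_6213_Large}, I would shrink the Cuntz target in the $A$-subequivalence: use Lemma~2.9 of~\cite{Ph40} (exactly as in the proof of Theorem~\ref{T-2717CptImpRq}) to choose $y_0 \in {\overline{xAx}}_{+} \setminus \{0\}$ such that $h \subeq_A y_0$ with $0 \leq h \leq 1$ already forces $\|(1-h) x (1-h)\| > 1 - \ep$. I would also run Lemma~\ref{FuncCalcCom} on each of $f_0, \ldots, f_N$ to extract a common $\dt > 0$ small enough that $\|[g,a_j]\| < \dt$ implies $\|[f_n(g), a_j]\| < \ep$ for every $n$ and $j$. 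Now apply Definition~\ref{D_6213_Large} to the data $a_1, \ldots, a_m$, $y_0$ in place of~$x$, $y$ as given, and tolerance $\min(\dt, \ep)$, producing $c_1, \ldots, c_m \in A$ and $g \in B$. Then set $g_n = f_n(g)$.

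Most verifications are automatic: (\ref{9430_LgTower-Cut2}) is inherited from the centrally large conclusion; (\ref{9430_LgTower-Cut6}) is the choice of $\dt$ combined with Lemma~\ref{FuncCalcCom}; and for (\ref{9430_LgTower-Cut4}) and (\ref{9430_LgTower-Cut5a}) the fact that $f_n(0) = 0$ puts $g_n \in \overline{gBg}$, so $g_n \subeq_B g \subeq_B y$ and $g_n \subeq_A g \subeq_A y_0$, whereupon the norm condition follows from the defining property of~$y_0$.

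The one delicate step — and the main obstacle — is condition~(\ref{9430_LgTower-Cut3}), which demands $(1 - g_n) c_j \in B$ \emph{exactly}, not merely up to~$\ep$. The trick is to rewrite
\[
1 - g_n = 1 - f_n(g) = \tilde f_n(1 - g), \qquad \text{where } \tilde f_n(\mu) = 1 - f_n(1 - \mu).
\]
Since the cutoffs were arranged with $f_n(1) = 1$, we have $\tilde f_n(0) = 0$. The conclusion of Definition~\ref{D_6213_Large} gives $(1-g) c_j \in B$ exactly; hence Corollary~\ref{C_4926_InSub}, applied with $b = 1 - g \in B_{+}$, $a = c_j$, and $f = \tilde f_n$, yields $\tilde f_n(1 - g) c_j = (1 - g_n) c_j \in B$, as required. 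This closes the proof.
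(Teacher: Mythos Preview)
Your proposal is correct and follows essentially the same approach as the paper: apply the centrally large definition once to obtain a single $g$, then build the tower $g_0,\ldots,g_N$ by functional calculus with nested cutoff functions, invoking Lemma~\ref{FuncCalcCom} for the commutator estimates and Corollary~\ref{C_4926_InSub} (via the substitution $\ld \mapsto 1 - f_n(1-\ld)$) for condition~(\ref{9430_LgTower-Cut3}).

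The one genuine difference is your treatment of~(\ref{9430_LgTower-Cut5a}). The paper does not preprocess~$x$ via Lemma~2.9 of~\cite{Ph40}; instead it arranges its bottom function $k_0$ to satisfy $|k_0(\ld)-\ld|\leq \ep/4$, applies the definition with $x$ itself and tolerance roughly $\ep/2$, and then estimates $\|(1-g_0)x(1-g_0)\| \geq \|(1-g)x(1-g)\| - 2\|g-g_0\| > 1-\ep$ directly, handling $n\geq 1$ via $(1-g_n)^2 \geq (1-g_0)^2$. Your route through Lemma~2.9 is a legitimate alternative that trades that norm estimate for an extra preparatory step (and a tacit normalization of~$y_0$); the paper's route is slightly more self-contained. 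Either way the argument goes through.
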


As we will see in the proof,
it is enough to take $n = 0$ in~(\ref{9430_LgTower-Cut4})
and~(\ref{9430_LgTower-Cut5a})
and $n = N$ in (\ref{9430_LgTower-Cut3}).

\begin{proof}[Proof of Lemma~\ref{L_9430_LgTower}]
\Wolog{} $\| a_j \| \leq 1$ for $j = 1, 2, \ldots, m$
and $\ep \leq1$.

Define \cfn{s} $k_0, k_1, \ldots, k_N \colon [0, 1] \to [0, 1]$
as follows.
Set
\[
k_0 (\ld) = \begin{cases}
   \big( 1 - \tfrac{\ep}{4} \big)^{-1} \ld
             & 0 \leq \ld \leq 1 - \frac{\ep}{4}
        \\
   1         & 1 - \frac{\ep}{4} \leq \ld \leq 1.
\end{cases}
\]
For $n = 1, 2, \ldots, N$,
set
\[
k_n (\ld) = \begin{cases}
   0         & 0 \leq \ld \leq 1 - 2^{- n - 1} \ep
        \\
   2^{n + 2} \ep^{-1} \big( \ld - 1 + 2^{- n - 1} \ep \big)
             &
    1 - 2^{- n - 1} \ep \leq \ld \leq 1 - 2^{- n - 2} \ep
        \\
   1         & 1 - 2^{- n - 2} \ep \leq \ld \leq 1.
\end{cases}
\]
Then $k_{n - 1} k_n = k_n$ for $n = 1, 2, \ldots, N$
and $| k_0 (\ld) - \ld | \leq \frac{\ep}{4}$
for all $\ld \in [0, 1]$.
For $n = 0, 1, \ldots, N$,
in Lemma~\ref{FuncCalcCom}
choose $\dt_n > 0$ for $f = k_n$ and with $\ep$ as given.
Set
\[
\dt = \min \left( \dt_0, \dt_1, \ldots, \dt_N, \tfrac{\ep}{2} \right).
\]
Apply Definition~\ref{D_6213_Large}
with $\dt$ in place of~$\ep$
and with $m, a_1, a_2, \ldots, a_m, x, y$ as given,
getting  $c_1, c_2, \ldots, c_m \in A$ and $g \in B$ as there.
For $n = 0, 1, \ldots, N$,
set $g_n = k_n (g)$.
We verify the conclusions of the lemma.

Conclusion~(\ref{9430_LgTower-Cut1}) is clear from the construction.
Conclusion~(\ref{9430_LgTower-Cut2}) is immediate.
For conclusion~(\ref{9430_LgTower-Cut3}),
observe that $1 - g_N$ is gotten from $1 - g$
by functional calculus using the function
$\ld \mapsto 1 - k_N (1 - \ld)$.
This function vanishes at~$0$,
so for $j = 1, 2, \ldots, m$
we have $(1 - g_N) c_j \in B$
by Corollary~\ref{C_4926_InSub}.
Now for $n = 0, 1, \ldots, N - 1$,
we have $(1 - g_n) c_j = (1 - g_n) (1 - g_N) c_j \in B$.
For conclusion~(\ref{9430_LgTower-Cut4}),
use Lemma~2.2(i) of~\cite{KR}
to get $g_0 \subeq g$.
For $n = 1, 2, \ldots, N$,
we then get $g_n \subeq g$ since $g_n \leq g_0$.

For conclusion~(\ref{9430_LgTower-Cut5a}),
we start with the calculation
\[
\| (1 - g_0) x (1 - g_0) \|
  \geq \| (1 - g) x (1 - g) \|
      - 2 \| g - g_0 \|
  > 1 - \frac{\ep}{2} - 2 \left( \frac{\ep}{4} \right)
  = 1 - \ep.
\]
For $n = 1, 2, \ldots, N$,
use $(1 - g_n)^2 \geq (1 - g_0)^2$
at the second step to get
\begin{align*}
\| (1 - g_n) x (1 - g_n) \|
& = \| x^{1/2} (1 - g_n)^2 x^{1/2} \|
  \geq \| x^{1/2} (1 - g_0)^2 x^{1/2} \|
  \\
& = \| (1 - g_0) x (1 - g_0) \|
  > 1 - \ep.
\end{align*}
Conclusion~(\ref{9430_LgTower-Cut5})
follows directly from the choice of~$\dt_n$.
\end{proof}

\begin{lem}\label{LemBothTimesAround}
Let $m, n, N \in \N$.
Let $A$ be
an infinite dimensional simple separable
unital \ca.
Let $B \subset A$ be \rokhLg{} in~$A$
and suppose that $\tsr (B) = 1$.
Let $x_1, x_2, \ldots, x_n \in A_{+}$
be positive elements of norm one.
Let $a_1, a_2, \ldots, a_m \in A$ be arbitrary.
Then for all $\ep > 0$ there exists $h \in B$ with $0 \leq h \leq 1$,
there exist  $r_{k, l} \in B$
for $k = 1, 2, \ldots, n$ and for $l = 0, 1, \ldots, N - 1$,
and there exist $b_1, b_2 \ldots, b_m \in A$ such that:
\begin{enumerate}
\item\label{BothTimes7}
$0 \leq r_{k, l} \leq 1$ and $\| r_{k, l} \| = 1$
for $k = 1, 2, \ldots, n$ and $l = 0, 1, \ldots, N$.
\item\label{BothTimes1}
$\| x_k r_{k, 0} - r_{k, 0}\| < \ep$ for $k = 1, 2, \ldots, n$.
\item\label{BothTimes2}
$1 - h \subeq_B r_{k, N}$ for $k = 1, 2, \ldots, n$.
\item\label{BothTimes3}
$r_{k, l} r_{k, l + 1} = r_{k, l + 1}$ for $k = 1, 2, \ldots, n$
and $l = 0, 1, \ldots, N - 2$.
\item\label{BothTimes4}
$h r_{k, 0} = r_{k, 0}$ for $k = 1, 2, \ldots, n$.
\item\label{BothTimes_New}
$r_{k, l} \in {\overline{h B h}}$
for $k = 1, 2, \ldots, n$ and $l = 0, 1, \ldots, N$.
\item\label{BothTimes5}
$\| a_j - b_j \| < \ep$ for $j = 1, 2, \ldots, m$.
\item\label{BothTimes6}
$h b_j \in {\overline{h B}}\subset B$ for $j = 1, 2, \ldots, m$.
\setcounter{TmpEnumi}{\value{enumi}}
\end{enumerate}
\end{lem}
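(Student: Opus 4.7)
My plan is to produce both the element $h$ together with the $b_j$, and the towers $r_{k,\bullet}$, from a single invocation of the tower form of the centrally-large definition, Lemma~\ref{L_9430_LgTower}. Augment the list $a_1,\ldots,a_m$ with $x_1,\ldots,x_n$ and apply Lemma~\ref{L_9430_LgTower} (using Lemma~\ref{L:LargeStaysPositive} so that the $x_k$'s yield positive approximants $d_k\in A_{+}$ with $(1-g_i)d_k(1-g_i)\in B$) at a tower length $M\geq N$, with a tolerance $\ep'\ll\ep$, and with a small $y\in B_{+}\setminus\{0\}$ chosen beforehand. To arrange $\|(1-g_i)x_k(1-g_i)\|>1-\ep'$ simultaneously for all $k$, one replaces the single element~$x$ in the hypothesis by a common positive element obtained by applying Lemma~2.9 of~\cite{Ph40} to each $x_k$ and then using Lemma~2.6 of~\cite{Ph40} together with simplicity of $B$, exactly as in the proof of Theorem~\ref{T-2717CptImpRq}. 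The output is a Rokhlin tower $g_0,g_1,\ldots,g_M\in B$ with $g_{i-1}g_i=g_i$ that approximately commutes with the $a_j$'s and the $x_k$'s within $\ep'$, together with $c_j\approx a_j$ and positive $d_k\approx x_k$ in $A$.

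Set $h=1-g_M$ and $b_j=c_j$. The tower relations give $g_0g_M=g_Mg_0=g_M$, hence $(1-g_M)(1-g_0)=1-g_0=(1-g_0)(1-g_M)$, so that any element of $\overline{(1-g_0)A(1-g_0)}$ is fixed on both sides by~$h$. For each $k$, set $e_k=(1-g_0)d_k(1-g_0)\in B_{+}$; then $e_k\in\overline{(1-g_0)B(1-g_0)}$ and $\|e_k\|>1-2\ep'$. Choose continuous bumps $\phi_0,\phi_1,\ldots,\phi_N\colon[0,1]\to[0,1]$ vanishing at $0$, each attaining the value $1$ on $\spec(e_k)$, satisfying $\phi_{l-1}\phi_l=\phi_l$, with $\phi_l$ supported in successively narrower bands near the top of the spectrum. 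Put $r_{k,l}=\phi_l(e_k)\in B$ (inclusion via $\phi_l(0)=0$). Conditions~(\ref{BothTimes7}), (\ref{BothTimes3}), (\ref{BothTimes4}), (\ref{BothTimes_New}), (\ref{BothTimes5}), and~(\ref{BothTimes6}) are then essentially bookkeeping: (\ref{BothTimes7}) from $\|e_k\|>1-2\ep'$; (\ref{BothTimes3}) from $\phi_{l-1}\phi_l=\phi_l$; (\ref{BothTimes4}) and~(\ref{BothTimes_New}) from $h(1-g_0)=1-g_0=(1-g_0)h$ combined with $r_{k,l}\in\overline{(1-g_0)B(1-g_0)}$; (\ref{BothTimes5}) from $b_j=c_j\approx a_j$; and~(\ref{BothTimes6}) from Lemma~\ref{L_9430_LgTower}(\ref{9430_LgTower-Cut3}) applied at $n=M$. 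Condition~(\ref{BothTimes2}) is arranged by choosing $y$ small enough in the initial application, via Lemma~2.6 of~\cite{Ph40} and simplicity of~$B$, so that $g_M\subeq_B y$ is Cuntz-below $\phi_N(e_k)$ in $B$ for every~$k$.

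The main obstacle is condition~(\ref{BothTimes1}), $\|x_kr_{k,0}-r_{k,0}\|<\ep$. The idea is to use $\|[g_0,x_k]\|<\ep'$ and $\|d_k-x_k\|<\ep'$ to see that $e_k$ lies within $O(\ep')$ of $x_k^{1/2}(1-g_0)^2x_k^{1/2}$; apply Lemma~\ref{FuncCalcCom} together with a standard functional-calculus perturbation argument to transfer this closeness through $\phi_0$, concluding that $\phi_0(e_k)$ is within an error controlled by $\ep'$ and the modulus of continuity of $\phi_0$ of $\phi_0(x_k^{1/2}(1-g_0)^2x_k^{1/2})$; and finally observe that the latter element lies in $\overline{x_kAx_k}$ and that, because $\phi_0$ is supported near the top of the spectrum where its argument approximately equals $1$ (since $\|x_k^{1/2}(1-g_0)^2x_k^{1/2}\|>1-2\ep'$), $x_k$ acts almost as the identity on it. The delicate bookkeeping is the order of quantifiers: fix $\ep$, then pick the band widths of the $\phi_l$'s, then choose $\ep'$ small enough to absorb all commutator and functional-calculus errors accumulated along the way, and finally choose $y$ small enough in $B$ to make condition~(\ref{BothTimes2}) go through uniformly in~$k$.
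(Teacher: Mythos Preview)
Your argument for condition~(\ref{BothTimes2}) has a genuine circularity. You write that $y$ should be chosen ``small enough \ldots\ so that $g_M \subeq_B y$ is Cuntz-below $\phi_N(e_k)$ in $B$ for every~$k$,'' but $e_k = (1-g_0)d_k(1-g_0)$ depends on $g_0$, which is produced by the application of Lemma~\ref{L_9430_LgTower} with input~$y$. There is no a~priori lower bound on the Cuntz class of $\phi_N(e_k)$: even though $\|e_k\| > 1 - 2\ep'$, the spectral cutoff $\phi_N(e_k)$ may be arbitrarily small in the Cuntz order. So you cannot choose $y$ below it in advance. Note also that the hypothesis $\tsr(B)=1$ appears nowhere in your argument, which is a warning sign.

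The paper breaks the circularity by fixing, \emph{before} invoking the tower lemma, auxiliary elements $y_k, z_k \in B_{+}$ (via Lemma~5.3 of~\cite{Ph40}) that sit approximately under $x_k^{(1)}$, together with two mutually orthogonal, Cuntz-equivalent pieces $c_{k,1}, c_{k,2} \in \overline{z_k B z_k}$. The comparison element $c$ (playing the role of your~$y$) is then chosen below $(c_{k,1}-\tfrac34)_+$ for all~$k$. After the tower is produced, the relation $g_0 \subeq_B \big[(1-g_1)y_k(1-g_1) - \tfrac34\big]_+$ is obtained by a Cuntz-cancellation step using the two orthogonal copies and Theorem~4.3 of~\cite{RW}; this is precisely where $\tsr(B)=1$ enters. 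That element is in turn shown, via approximate commutation, to be Cuntz-below $(r_k - \tfrac12)_+ \subeq_A r_{k,N}$. Your sketch for~(\ref{BothTimes1}) has a related issue: membership of $\phi_0(x_k^{1/2}(1-g_0)^2 x_k^{1/2})$ in $\overline{x_k A x_k}$ does not by itself give $x_k \cdot (\text{this element}) \approx \text{this element}$; the paper handles this by working with $x_k^{(1)} = p_1(x_k)$, on which $x_k^{(0)} = p_0(x_k)$ acts exactly as the identity, and then using $\|x_k - x_k^{(0)}\| \leq 2\ep_0$.
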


\begin{proof}
The subalgebra $B$ is simple by Proposition~5.2 of~\cite{Ph40},
finite by Proposition~3.1 of~\cite{Rf},
and infinite dimensional by Proposition~5.5 of~\cite{Ph40}.

Set $\ep_0 = \min \big( \tfrac{\ep}{4}, \tfrac{1}{48} \big)$.
Define \cfn{s} $p_0, p_1 \colon [0, 1] \to [0, 1]$
by
\[
p_0 (\ld) = \begin{cases}
   ( 1 - 2 \ep_0 )^{-1} \ld
             & 0 \leq \ld \leq 1 - 2 \ep_0
        \\
   1         & 1 - 2 \ep_0 \leq \ld \leq 1
\end{cases}
\]
and
\[
p_1 (\ld) = \begin{cases}
   0         & 0 \leq \ld \leq 1 - 2 \ep_0
        \\
   \ep_0^{-1} ( \ld - 1 + 2 \ep_0 )
             & 1 - 2 \ep_0 \leq \ld \leq 1 - \ep_0
        \\
   1         & 1 - \ep_0 \leq \ld \leq 1.
\end{cases}
\]
Define $x_k^{(0)} = p_0 (x_k)$ and $x_k^{(1)} = p_1 (x_k)$
for $k = 1, 2, \ldots, n$.
This gives
\begin{equation}\label{Eq_4X02_xk0}
\big\| x_k - x_k^{(0)} \big\| \leq 2 \ep_0,
\,\,\,\,\,\,
x_k^{(0)} x_k^{(1)} = x_k^{(1)},
\andeqn
\big\| x_k^{(1)} \big\| = 1.
\end{equation}
We further need the functions
$f_{1 / 2^{N + 2}}, \, f_{1 / 2^{N + 1}}, \, \ldots, f_{1 / 4 }
  \colon [0, 1] \to [0, 1]$
of Definition~\ref{fSubEp}.
We name them instead,
in order,
$f_0, f_1, \ldots, f_N$,
getting
\begin{equation}\label{Eq_4X02_fSubEp}
f_l f_{l + 1} = f_{l + 1}
\end{equation}
for $l = 0, 1, \ldots, N - 1$
and
\begin{equation}\label{Eq_4X02_EqHalf}
f_N |_{ \left[ \frac{1}{2}, 1 \right] } = 1.
\end{equation}

For $k = 1, 2, \ldots, n$,
use Lemma~5.3 of~\cite{Ph40}
(with $r = 1$)
to choose $y_k^{(0)} \in B_{+}$
such that
\[
\big\| y_k^{(0)} \big\| = 1
\andeqn
\big\| \big( x_k^{(1)} \big)^{1/2} y_k^{(0)} - y_k^{(0)} \big\|
 < \frac{1}{48}.
\]
Define
\[
y_k = p_0 \big( y_k^{(0)} \big)
\andeqn
z_k = p_1 \big( y_k^{(0)} \big).
\]
Then $y_k z_k = z_k$ and $\| z_k \| = 1$.
Also,
\[
\big\| y_k - y_k^{(0)} \big\|
 \leq 2 \ep_0
 \leq \frac{1}{24},
\]
so
\begin{equation}\label{Eq_4X02_xkhalfyk}
\big\| \big( x_k^{(1)} \big)^{1/2} y_k - y_k \big\|
  \leq 2 \big\| y_k - y_k^{(0)} \big\|
       + \big\| \big( x_k^{(1)} \big)^{1/2} y_k^{(0)} - y_k^{(0)} \big\|
  < \frac{1}{12} + \frac{1}{48}
  = \frac{5}{48}.
\end{equation}

For $k = 1, 2, \ldots, n$,
Lemma~2.2 or Lemma~2.3 of~\cite{Ph40}
(depending on whether ${\overline{z_k B z_k}}$ is unital)
provides nonzero orthogonal unitarily equivalent positive elements
$c_{k, 1}, c_{k, 2} \in {\overline{z_k B z_k}}$.
We may assume that $\| c_{k, 1} \| = \| c_{k, 2} \| = 1$.
Since $B$ is finite and infinite dimensional,
we can use Lemma~2.9 of~\cite{Ph40}
to find $d_k \in B_{+} \setminus \{ 0 \}$
such that whenever $g \in B$ satisfies
$0 \leq g \leq 1$ and $g \subeq_B d_k$,
then
\begin{equation}\label{Eq_4X02_56}
\| (1 - g) z_k (1 - g) \| > \frac{5}{6}.
\end{equation}
Now use simplicity of $B$ and Lemma~2.6 of~\cite{Ph40}
to find a nonzero positive element $c \in B$
such that for $k = 1, 2, \ldots, n$ we have
\begin{equation}\label{Eq_4X02_UnifSub}
c \subeq_B d_k
\andeqn
c \subeq_B \big( c_{k, 1} - \tfrac{3}{4} \big)_{+}.
\end{equation}

By a polynomial approximation argument,
there is $\dt_0 > 0$
such that whenever $D$ is a \ca{}
and $y_1, y_2 \in D$ satisfy $0 \leq y_1, y_2 \leq 1$
and $\| y_1 - y_2 \| < 2 \dt_0$,
then $\| f_0 (y_1) - f_0 (y_2) \| < \frac{\ep}{4}$.
Set $\dt = \min \big( \ep, \dt_0, \frac{1}{48} \big)$.
Apply Lemma~\ref{L_9430_LgTower}
with $N = 3$ and $\dt$ in place of~$\ep$,
getting $g_0, g_1, g_2, g_3 \in B$
and $b_1, b_2, \ldots, b_m \in A$ such that:
\begin{enumerate}
\setcounter{enumi}{\value{TmpEnumi}}
\item\label{4X02_Cut1}
$0 \leq g_0, g_1, g_2, g_3 \leq 1$,
$g_0 g_1 = g_1$, $g_1 g_2 = g_2$, and $g_2 g_3 = g_3$.
\item\label{4X02_Cut2}
$\| b_j - a_j \| < \dt$ and $(1 - g_3) b_j \in B$
for $j = 1, 2, \ldots, m$.
\item\label{4X02_Cut3}
$g_0 \subeq_B c$.
\item\label{4X02_Cut4}
$\big\| \big[ g_1, \big( x_k^{(1)} \big)^{1/2} \big] \big\| < \dt$
and
$\big\| \big[ g_2, \big( x_k^{(1)} \big)^{1/2} \big] \big\| < \dt$
for $k = 1, 2, \ldots, n$.
\end{enumerate}
Part (\ref{BothTimes5}) of the conclusion is
immediate from the choice of $b_1, b_2, \ldots, b_m$.
Set $h = 1 - g_3$.
Then $h b_j \in B$ by~(\ref{4X02_Cut2}).
Thus $\lim_{n \to \infty} h^{1/n} h b_j = h b_j \in {\overline{h B}}$, which is part~(\ref{BothTimes6}) of the conclusion.

For $k = 1, 2, \ldots, n$,
further define
$r_k = (1 - g_2) x_k^{(1)} (1 - g_2)$
and, for $l = 0, 1, \ldots, N$,
define $r_{k, l} = f_l (r_k)$.
Clearly $0 \leq r_{k, l} \leq 1$.
{}From~(\ref{Eq_4X02_fSubEp})
we get
\[
r_{k, l} r_{k, l + 1} = r_{k, l + 1}
\]
for $l = 0, 1, \ldots, N - 1$.
This is part~(\ref{BothTimes3})
and most of part~(\ref{BothTimes7}) of the conclusion.
We next use~(\ref{Eq_4X02_xkhalfyk}) twice to
get
\begin{equation}\label{Eq_4X04_New}
\big\| \big( x_k^{(1)} \big)^{1/2} y_k \big( x_k^{(1)} \big)^{1/2}
        - y_k \big\|
  < \frac{5}{24}.
\end{equation}
Using this relation at the second step,
$z_k \leq y_k$ at the third step,
and $g_2 \leq g_0$, (\ref{4X02_Cut3}), (\ref{Eq_4X02_UnifSub}),
and the choice of~$d_k$
(which gave~(\ref{Eq_4X02_56})) at the fourth step,
we get the estimate
\begin{align*}
\| r_k \|
& \geq \big\| (1 - g_2) \big( x_k^{(1)} \big)^{1/2}
     y_k \big( x_k^{(1)} \big)^{1/2} (1 - g_2) \big\|
  > \| (1 - g_2) y_k (1 - g_2) \| - \frac{5}{24}
\\
& \geq \| (1 - g_2) z_k (1 - g_2) \| - \frac{5}{24}
  > \frac{5}{6} - \frac{5}{24}
  > \frac{1}{2}.
\end{align*}
In particular,
by~(\ref{Eq_4X02_EqHalf}),
we have $\| r_{k, N} \| = 1$.
This implies that $\| r_{k, l} \| \geq 1$
for $r_{k, l} r_{k, l + 1} = r_{k, l + 1}$,
and finishes the proof of part~(\ref{BothTimes7}) of the conclusion.

Part~(\ref{BothTimes4}) of the conclusion
follows from $h = 1 - g_3$,
$g_2 g_3 = g_3$,
and the fact that
$r_{k, 0} \in {\overline{(1 - g_2) B (1 - g_2)}}$.
Combining this with part~(\ref{BothTimes3}),
we get part~(\ref{BothTimes_New}).

Our next goal is part~(\ref{BothTimes2}) of the conclusion.
We start with
\begin{equation}\label{Eq_4X02_hSub}
1 - h = g_3 \leq g_0.
\end{equation}

We next claim that
\begin{equation}\label{Eq_4X02_g0Sb}
g_0 \subeq_B \big[ (1 - g_1) y_k (1 - g_1) - \tfrac{3}{4} \big]_{+}.
\end{equation}
We use (\ref{4X02_Cut3}) and (\ref{Eq_4X02_UnifSub}) at the first step,
Lemma~1.7 of~\cite{Ph40},
$c_{k, 1} c_{k, 2} = 0$,
unitary equivalence of $c_{k, 1}$ and $c_{k, 2}$,
and $c_{k, 1} + c_{k, 2} \leq y_k$ at the second step,
Lemma~1.8 of~\cite{Ph40}
at the third step,
and $g_0 g_1 = g_1$ and Lemma 1.10 of~\cite{Ph40}
at the fourth step,
getting
\begin{align*}
g_0 \oplus g_0
& \subeq_B \big( c_{k, 1} - \tfrac{3}{4} \big)_{+}
             + \big( c_{k, 2} - \tfrac{3}{4} \big)_{+}
\\
& \subeq_B \big( y_{k} - \tfrac{3}{4} \big)_{+}
\\
& \subeq_B \big[ (1 - g_1) y_k (1 - g_1) - \tfrac{3}{4} \big]_{+}
        \oplus g_1
\\
& \subeq_B \big[ (1 - g_1) y_k (1 - g_1) - \tfrac{3}{4} \big]_{+}
        \oplus \big( g_0 - \tfrac{1}{2} \big)_{+}.
\end{align*}
Since $\tsr (B) = 1$,
we can apply Theorem~4.3 of~\cite{RW}
to get~(\ref{Eq_4X02_g0Sb}),
as desired.

Now we claim that
\begin{equation}\label{Eq_4X02_ykSbxk1}
\big[ (1 - g_1) y_k (1 - g_1) - \tfrac{3}{4} \big]_{+}
  \subeq_A \big( r_k - \tfrac{1}{2} \big)_{+}.
\end{equation}
Set
\[
s = \big[ (1 - g_2) \big( x_k^{(1)} \big)^{1/2} (1 - g_2) \big]
       \big[ (1 - g_1) y_k (1 - g_1) \big]
       \big[ (1 - g_2) \big( x_k^{(1)} \big)^{1/2} (1 - g_2) \big].
\]
Then,
using $(1 - g_1) (1 - g_2) = 1 - g_1$ at the first step,
$\| 1 - g_2 \| \leq 1$
and~(\ref{4X02_Cut4}) at the third step,
(\ref{Eq_4X04_New}) and $\| 1 - g_1 \| \leq 1$
at the fifth step,
and the choice of $\dt$ at the sixth step,
\begin{align*}
& \big\| s - (1 - g_1) y_k (1 - g_1) \big\|
\\
& \alignInd
= \big\| (1 - g_2) \big( x_k^{(1)} \big)^{1/2} (1 - g_1) y_k (1 - g_1)
                        \big( x_k^{(1)} \big)^{1/2} (1 - g_2)
\\
& \alignInd  \alignInd
              - (1 - g_2) (1 - g_1) y_k (1 - g_1) (1 - g_2) \big\|
\\
& \alignInd
\leq \| 1 - g_2 \|^2
    \big\| \big( x_k^{(1)} \big)^{1/2} (1 - g_1) y_k (1 - g_1)
                  \big( x_k^{(1)} \big)^{1/2}
         - (1 - g_1) y_k (1 - g_1) \big\|
\\
& \alignInd
< 2 \dt + \big\| (1 - g_1) \big( x_k^{(1)} \big)^{1/2} y_k
                 \big( x_k^{(1)} \big)^{1/2} (1 - g_1)
         - (1 - g_1) y_k (1 - g_1) \big\|
\\
& \alignInd
\leq 2 \dt + \| 1 - g_1 \|^2
          \big\| \big( x_k^{(1)} \big)^{1/2} y_k
                 \big( x_k^{(1)} \big)^{1/2} - y_k \big\|
\\
& \alignInd
< 2 \dt + \frac{5}{24}
\leq \frac{1}{4}.
\end{align*}
So Corollary~1.6 of~\cite{Ph40}
implies that
\[
\big[ (1 - g_1) y_k (1 - g_1) - \tfrac{3}{4} \big]_{+}
  \subeq_A \big( s - \tfrac{1}{2} \big)_{+}.
\]
Clearly
\[
s
  \leq (1 - g_2) \big( x_k^{(1)} \big)^{1/2}
             \big( x_k^{(1)} \big)^{1/2} (1 - g_2)
  = r_k,
\]
so
\[
\big( s - \tfrac{1}{2} \big)_{+}
 \subeq_A \big( r_k - \tfrac{1}{2} \big)_{+}
\]
by Lemma~1.7 of~\cite{Ph40}.
The claim follows.

Using~(\ref{Eq_4X02_EqHalf}) and the definition $r_{k, N} = f_N (r_k)$,
we get
\[
r_{k, N} \big( r_k - \tfrac{1}{2} \big)_{+}
    = \big( r_k - \tfrac{1}{2} \big)_{+}.
\]
In particular,
$\big( r_k - \tfrac{1}{2} \big)_{+} \subeq_A r_{k, N}$.
Combining this relation with
(\ref{Eq_4X02_hSub}), (\ref{Eq_4X02_g0Sb}), and~(\ref{Eq_4X02_ykSbxk1}),
we get $1 - h \subeq_A r_{k, N}$.
This is part~(\ref{BothTimes2}) of the conclusion.

It remain to prove part~(\ref{BothTimes1}) of the conclusion.
It follows from~(\ref{4X02_Cut4})
that
\[
\big\| \big( x_k^{(1)} \big)^{1/2} (1 - g_2)^2
                   \big( x_k^{(1)} \big)^{1/2}
       - (1 - g_2) x_k^{(1)} (1 - g_2) \big\|
  < 2 \dt.
\]
Using $\dt \leq \dt_0$,
we get
\begin{equation}\label{Eq_4X02Last}
\big\| f_0 \big( \big( x_k^{(1)} \big)^{1/2} (1 - g_2)^2
                      \big( x_k^{(1)} \big)^{1/2} \big)
       - f_0 \big( (1 - g_2) x_k^{(1)} (1 - g_2) \big) \big\|
   < \frac{\ep}{4}.
\end{equation}
Since $x_k^{(0)} x_k^{(1)} = x_k^{(1)}$
(from~(\ref{Eq_4X02_xk0})),
we get
\[
x_k^{(0)}
   f_0 \big( \big( x_k^{(1)} \big)^{1/2} (1 - g_2)^2
                           \big( x_k^{(1)} \big)^{1/2} \big)
 = f_0 \big( \big( x_k^{(1)} \big)^{1/2} (1 - g_2)^2
                           \big( x_k^{(1)} \big)^{1/2} \big).
\]
Combining this equation with $\big\| x_k - x_k^{(0)} \big\| \leq 2 \ep_0$
(also from~(\ref{Eq_4X02_xk0}))
and using~(\ref{Eq_4X02Last}) twice,
we now get
\[
\big\| x_k
      f_0 \big( (1 - g_2) x_k^{(1)} (1 - g_2) \big) \big)
   - f_0 \big( (1 - g_2) x_k^{(1)} (1 - g_2) \big) \big\|
   < 2 \ep_0 + \frac{2 \ep}{4}
   \leq \ep.
\]
By the definition of $r_{k, 0}$,
we thus have $\| x_k r_{k, 0} - r_{k, 0} \| < \ep$,
as desired.
\end{proof}

\begin{thm}\label{tsr1GoesUpExtraLarge}
Suppose $A$ is an infinite dimensional simple separable
unital \ca.
Suppose $A$ has a \rokhLg{} subalgebra $B$
such that $\tsr (B) = 1$.
Then $\tsr (A) = 1$.
\end{thm}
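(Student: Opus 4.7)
Given $a \in A$ and $\ep > 0$, the goal is to produce an invertible $y \in A$ with $\| y - a \| < \ep$; by scaling, assume $\| a \| \leq 1$. First note that $A$ is finite: $B$ is stably finite because $\tsr (B) = 1$, and finiteness passes to $A$ from a large subalgebra by Proposition~3.1 of~\cite{Rf}. This is what enables the application of Lemma~\ref{bStr1} to a perturbation $a'$ of $a$, once a triple $(z_1, z_2, z_3) \in B_{+}^3$ meeting its hypotheses is constructed. The construction of that triple is where Lemma~\ref{LemBothTimesAround} enters.

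To encode the ``kernel on the left/right'' of $a$, pick small $\dt > 0$ and set $x_1 = 1 - f_{\dt} (aa^*)$ and $x_2 = 1 - f_{\dt} (a^*a)$. The standard identity $f_{\dt} (aa^*) a = a f_{\dt} (a^*a)$, together with a norm estimate, gives $\| x_1 a \|, \| a x_2 \| \leq \sqrt{2 \dt}$. If $\| x_k \| < 1$ for some~$k$, then $a$ is already bounded below on one side, so in the finite algebra~$A$ one can perturb to an invertible by an easier argument; so assume $\| x_1 \| = \| x_2 \| = 1$. Now apply Lemma~\ref{LemBothTimesAround} with $n = 2$, $m = 1$, $a_1 = a$, the chosen $x_1, x_2$, a large $N$, and a small tolerance. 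This yields $h \in B$, towers $(r_{1, l})_{l = 0}^N$ and $(r_{2, l})_{l = 0}^N$ in~$B$, and $b \in A$ with $\| b - a \|$ arbitrarily small such that $h b \in B$, $x_k r_{k, 0} \approx r_{k, 0}$, $r_{k, l} \in \overline{h B h}$, $r_{k, l - 1} r_{k, l} = r_{k, l}$, and $1 - h \subeq_B r_{k, N}$. Combining the approximate absorption with $\| x_1 a \|, \| a x_2 \| \leq \sqrt{2 \dt}$ yields $\| r_{1, 0} a \|$ and $\| a r_{2, 0} \|$ small.

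The core construction produces, using $\tsr (B) = 1$ and the Cuntz subequivalences $1 - h \subeq_B r_{k, N}$, orthogonal positive elements $z_1 \in \overline{r_{1, 0} B r_{1, 0}}$ and $z_3 \in \overline{r_{2, 0} B r_{2, 0}}$ with $z_1 + z_3 = 1 - h$. The picture is that $1 - h$ is Cuntz small enough to fit twice inside each $\overline{r_{k, 0} B r_{k, 0}}$, so we can re-embed it as a pair of mutually orthogonal copies via a Riesz-type rearrangement using Theorem~4.3 of~\cite{RW}, in the same spirit as in the proof of Lemma~\ref{LemBothTimesAround}. Setting $z_2 = h$ gives $z_1 + z_2 + z_3 = 1$, $z_1 z_3 = 0$, and $z_1, z_3 \in \overline{h B h}$. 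Finally set $a' = (1 - z_1) b (1 - z_3)$: the annihilation estimates make $\| a' - a \| < \ep / 2$, by construction $z_1 a' = 0 = a' z_3$, and $z_2 a' z_2 = h a' h \in B$ because $h b \in B$ and $z_1, z_3 \in \overline{h B h}$. Lemma~\ref{bStr1} then supplies an invertible $y \in A$ with $\| y - a' \| < \ep / 2$, hence $\| y - a \| < \ep$.

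The main obstacle is the orthogonal rearrangement in the construction of $z_1$ and $z_3$: the natural ``approximate kernel supports'' $r_{1, 0}$ and $r_{2, 0}$ both live in $\overline{h B h}$ and in general fail to be orthogonal, so obtaining the exact relation $z_1 z_3 = 0$ forces one to spend the Cuntz room provided by the tower depth $N$ and to move $1 - h$ into orthogonal positions inside $\overline{r_{1, 0} B r_{1, 0}}$ and $\overline{r_{2, 0} B r_{2, 0}}$ using stable rank one of~$B$. This is also the reason why Lemma~\ref{LemBothTimesAround} (rather than a one-level version of Definition~\ref{D_6213_Large}) is needed: the tower and the Cuntz dominations of $1 - h$ by $r_{k, N}$ are precisely what the rearrangement argument consumes.
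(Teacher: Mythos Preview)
Your central construction is impossible as stated. You ask for positive
$z_1 \in \overline{r_{1,0} B r_{1,0}}$ and
$z_3 \in \overline{r_{2,0} B r_{2,0}}$ with $z_1 + z_3 = 1 - h$.
But Lemma~\ref{LemBothTimesAround}(\ref{BothTimes4}) gives
$h r_{k,0} = r_{k,0}$, so $(1 - h) r_{k,0} = 0$, and hence the
hereditary subalgebras $\overline{r_{k,0} B r_{k,0}}$ and
$\overline{(1 - h) B (1 - h)}$ are orthogonal. Since
$0 \leq z_1 \leq 1 - h$ forces $z_1 \in \overline{(1 - h) B (1 - h)}$,
you would get $z_1 = 0$, and likewise $z_3 = 0$, contradicting
$h \neq 1$. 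No amount of Cuntz room from large~$N$ or cancellation
from~\cite{RW} can produce a nonzero positive element lying
simultaneously below $1 - h$ and inside $\overline{r_{k,0} B r_{k,0}}$;
the obstruction is algebraic, not an approximation issue.

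The paper avoids this by never trying to decompose $1 - h$ additively
into pieces living under the~$r_{k,0}$. Instead it first reduces to
two-sided zero divisors via R{\o}rdam's theorem, and then uses
unitaries in~$B$ (available from $\tsr (B) = 1$ through
Proposition~2.4(v) of~\cite{Ro2}) twice: once to conjugate so that the
left and right annihilators of a perturbation of~$a$ become the
\emph{same} element~$q$, and once more, after a second application of
Lemma~\ref{LemBothTimesAround}, to move a functional-calculus cut of
$1 - h_2$ unitarily into $\overline{r_1 B r_1}$. The triple
$(z_1, z_2, z_3)$ is then built from a single element
$f_{\dt_4} (1 - h_2)$ and its unitary conjugate, with
$z_2 = 1 - z_1 - z_3$; one never needs $z_1 + z_3$ to equal $1 - h$.
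Your outline misses exactly this unitary-conjugation mechanism, which is
the step where $\tsr (B) = 1$ does its real work. (A minor point:
finiteness of~$A$ comes from Proposition~6.15 of~\cite{Ph40}, not
directly from~\cite{Rf}.)
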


\begin{proof}
We will show that every two sided zero divisor in
$A$ is a limit of invertible elements.
That is, if $a \in A$ and there are nonzero
$x, y \in A$ such that $x a = a y = 0$,
then we show that for every $\ep > 0$ there is an invertible element
$z \in A$ such that $\| z - a \| < \ep$.
It will follow from Theorem~3.3(a) of~\cite{Ro}
(see Definition~3.1 of~\cite{Ro})
that any $a \in A$ which is not a limit of
invertible elements is left or right invertible but not both.
Now $B$ has stable rank one,
so is finite by Proposition~3.1 of~\cite{Rf}.
Proposition 6.15 of~\cite{Ph40}
then implies that $A$ is finite,
so there are no such elements.
Thus we conclude that $A$ has stable rank one.

Without loss of generality,
$\| a \| \leq 1/2$ and $\ep \leq 1$.
Replacing $x$ with $\| x \|^{- 2} x^* x$
and $y$ with $\| y \|^{- 2} y y^*$,
we may assume that $x$ and $y$ are positive elements of norm~$1$.

Set $\delta_1 = \min (1, \ep) / 20$.
Apply Lemma~\ref{LemBothTimesAround} with $N = 1$,
with $n = 2$, $x_1 = x$, and $x_2 = y$,
with $m = 1$ and $a_1 = a$,
and with $\dt_1$ in place of~$\ep$,
to get
$a_0 \in A$,
$h_1 \in B$ with $0 \leq h_1 \leq 1$,
and $s_0, s_1, t_0, t_1 \in {\overline{h_1 B h_1}}$
such that:
\begin{enumerate}
\item\label{781}
$0 \leq s_0, s_1, t_0, t_1 \leq 1$
and $\| s_0 \| = \| s_1 \| = \| t_0 \| = \| t_1 \| = 1$.
\item\label{782}
$\| x s_0 - s_0 \| < \dt_1$ and $\| y t_0 - t_0 \| < \dt_1$.
\item\label{783}
$1 - h_1 \subeq_B s_1$ and $1 - h_1 \subeq_B t_1$.
\item\label{787}
$s_0 s_1 = s_1$ and $t_0 t_1 = t_1$.
\item\label{784}
$h_1 s_0 = s_0$ and $h_1 t_0 = t_0$.
\item\label{785}
$\| a - a_0 \| < \dt_1$.
\item\label{786}
$h_1 a_0 \in {\overline{h_1 B}} \subset B$.
\setcounter{TmpEnumi}{\value{enumi}}
\end{enumerate}

Set $a_1 = (1 - s_{0}) a_0 (1 - t_{0})$.
Then
$s_{1}a_1 = (s_{1} - s_{1} s_{0}) a_0 (1 - t_{0}) = 0$
by~(\ref{787}) and selfadjointness,
and similarly
$a_1 t_{1} = 0$.

Next we estimate $\| a_1 - a \|$.
Since $\| a \| \leq \frac{1}{2}$ and $x a = 0$,
we have
\[
\| s_{0} a_0 \|
\leq \| s_{0} \| \cdot \| a_0 - a \|
  + \| s_{0} - s_{0} x \| \cdot \| a \|
\leq 2 \delta_1.
\]
Similarly $\| a_0 t_{0}\| < 2 \delta_1$.
Now we can compute
\begin{align*}
\| a - a_1 \| &
\leq \| a - a_0 \| + \| a_0 - (1 - s_{0}) a_0 (1 - t_{0}) \|
\\
&
< \dt_1 + \| a_0 t_{0}\| + \| s_{0} a_0 \| \cdot \| 1 - t_{0} \|
\leq 5 \delta_1.
\end{align*}
This also gives us $\| a_1 \| < 5 \delta_1 + \frac{1}{2} \leq 1$.

The subalgebra $B$ is simple by Proposition~5.2 of~\cite{Ph40}.
Apply Lemma~\ref{Under2Ways},
getting $r \in B_{+} \setminus \{ 0 \}$ with
$r \leq s_{1}$ and $r \subeq_B t_{1}$.
Set $\dt_2 = \frac{1}{2} \| r \|$,
let $f_{\delta_2}$ be as in Definition~\ref{fSubEp},
and set $q = f_{\delta_2} (r)$.
Thus $\| q \| = 1$.
The functions $f_{\ep}$ of Definition~\ref{fSubEp}
are the same as those
defined at the beginning of Section~2 of~\cite{Ro2}.
Since $\tsr (B) = 1$,
Proposition 2.4(v) in~\cite{Ro2}
therefore provides a unitary $v \in B$ such that
$v^* q v \in {\ov{t_{1} B t_{1} }}$.

We claim that
\begin{equation}\label{Eq_6213_qa1vs}
q a_1 v^* = a_1 v^* q = 0.
\end{equation}
For the first,
use $q \in {\ov{s_{1} B s_{1} }}$ and $s_1 a_1 = 0$
to get $q a_1 = 0$.
For the second,
use $v^* q v \in {\ov{t_{1} B t_{1} }}$
and $a_1 t_1 = 0$ to get $a_1 v^* q v = 0$,
and multiply on the right by $v^*$
to get $a_1 v^* q = 0$.
The claim is proved.

Set
\[
\delta_3
 = \min \left( \frac{\ep}{20}, \,
   \frac{1}{4} \right).
\]
Apply Lemma~\ref{LemBothTimesAround} with $N = 1$,
with $n = 1$ and $x_1 = q$,
with $m = 1$ and $a_1 v^*$ in place of $a_1, a_2, \ldots, a_m$,
and with $\dt_3$ in place of~$\ep$,
to get
$a_2 \in A$,
$h_2 \in B$ with $0 \leq h_2 \leq 1$,
and $r_0, r_1 \in {\overline{h_2 B h_2}}$
such that:
\begin{enumerate}
\setcounter{enumi}{\value{TmpEnumi}}
\item\label{B781}
$0 \leq r_0, r_1 \leq 1$
and $\| r_0 \| = \| r_1 \| = 1$.
\item\label{B782}
$\| q r_0 - r_0 \| < \dt_3$.
\item\label{B783}
$1 - h_2 \subeq_B r_1$.
\item\label{B787}
$r_0 r_1 = r_1$.
\item\label{B784}
$h_2 r_0 = r_0$.
\item\label{B785}
$\| a_2 - a_1 v^* \| < \dt_3$.
\item\label{B786}
$h_2 a_2 \in {\overline{h_2 B}} \subset B$.
\end{enumerate}

Define $a_3 = (1 - r_{0}) a_2 (1 - r_{0})$.
Then $r_{1} a_3 = a_3 r_{1} = 0$.
Next,
\[
\| a_2 \|
 \leq \| a_2 - a_1 v^* \| + \| a_1 - a \| \cdot \| v^* \|
      + \| a \| \cdot \| v^* \|
 < \delta_3 + 5 \delta_1 + 1/2
 \leq 1.
\]
{}From~(\ref{B782}) and selfadjointness,
we get $\| r_0 - r_0 q \| < \dt_3$.
Using this and $q a_1 v^* = 0$
(from(\ref{Eq_6213_qa1vs})) at the second step,
we have
\begin{align*}
\| r_{0} a_2 \|
& \leq \| r_{0} a_2 - r_{0} a_1 v^* \|
    + \| r_{0} - r_{0} q \| \cdot \| a_1 v^* \| + \| r_{0} q a_1 v^* \|
\\
& \leq \delta_3 \| r_{0} \| + \delta_3 \| a_1 \| \cdot \| v^* \|
  \leq 2 \dt_3.
\end{align*}
Similarly,
$a_1 v^* q = 0$ (also from(\ref{Eq_6213_qa1vs})) gives
$\| a_2 r_{0} \| \leq 2 \dt_3$.

Now we can estimate:
\[
\| a_2 - a_3 \|
  = \| a_2 - (1 - r_{0}) a_2 (1 - r_{0}) \|
  \leq \| a_2 r_{0} \| + \| r_{0} a_2 \| \cdot \| 1 - r_{0} \|
  < 4 \delta_3.
\]
Therefore
\[
\| a_3 v - a \|
 \leq \| a_3 - a_2 \| \cdot \| v \| + \| a_2 v -  a_1 \|
   + \| a_1 - a  \|
 < 4 \delta_3 + \delta_3 + 5 \delta_1
 \leq \frac{\ep}{2}.
\]

Combining~(\ref{B784}) with its adjoint,
we get $h_2 r_0 = r_0 h_2$.
So $h_2^{1 / 2} r_0 = r_0 h_2^{1 / 2}$.
Therefore
\[
h_2 a_3
 =  h_2^{1/2}
   ( 1 - r_{0} ) h_2^{1/2} a_2 (1 - r_{0}).
\]
We have $h_2^{1/2} a_2 \in B$ by~(\ref{B786})
and Corollary~\ref{C_4926_InSub},
and $1 - r_{0} \in B$,
so
\begin{equation}\label{Eq_4926_h2B}
h_2 a_3
 \in {\ov{h_2 B}} \cdot {\ov{h_2 B}} \cdot B
 \subset {\ov{h_2 B}}
 \subset B.
\end{equation}

If $h_2 = 1$,
then $a_3 \in B$.
Since $\tsr (B) = 1$,
there is an invertible element $z_0 \in B$
such that $\| z_0 - a_3 \| < \ep - \| a_3 v - a \|$.
Set $z = z_0 v$.
Then $z$ is invertible in~$A$
and $\| z - a \| < \ep$,
as desired.
So we may assume that $h_2 \neq 1$.
Choose $\dt_4 > 0$ such that
$\spec (h_2)$ is not contained in $[1 - 2 \dt_4, \, 1]$.
In particular, $\dt_4 < \frac{1}{2}$.
Let $f_{\delta_4}$ be as in Definition~\ref{fSubEp}.
Since $1 - h_2 \geq 0$ and $\spec (1 - h_2)$
is not contained in $[0, \, 2 \dt_4]$,
we have $\| f_{\delta_4} (1 - h_2) \| = 1$.

Using~(\ref{B783}),
$\tsr (B) = 1$,
and Proposition 2.4(v) in~\cite{Ro2},
we get a unitary
$u \in U (B)$ such that
\begin{equation}\label{Eq_4925NStar}
u f_{\delta_4} (1 - h_2) u^* \in {\ov{r_{1} B r_{1} }}.
\end{equation}
Define
\[
z_1 = u f_{\delta_4} (1 - h_2 ) u^*,
\,\,\,\,\,\,
z_3 = f_{\delta_4} (1 - h_2 ),
\andeqn
z_2 = 1 - z_1 - z_2.
\]
We want to apply Lemma~\ref{bStr1}
with $A$, $B$, $z_{1}$, $z_{2}$, and $z_{3}$
as given,
and with $a_3 u$ in place of~$a$.

We have $\tsr (B) = 1$ by hypothesis,
and hypothesis~(\ref{26641}) of Lemma~\ref{bStr1}
holds by definition.
Also $z_{1}, z_{2}, z_{3} \in B$ by construction.

Next,
we claim that $z_1z_3 = 0$.
{}From
(\ref{B787}) and~(\ref{B784}),
we get $h_2 r_1 = r_1$,
so $(1 - h_2) r_1 = 0$.
Thus
$z_1 (1 - h_2) = u f_{\delta_4} (1 - h_2) u^* (1 - h_2) = 0$
by Lemma~\ref{OrthHer},
and another application of Lemma~\ref{OrthHer} gives
\begin{equation}\label{Eq_4926_zero}
z_1 z_3
 = z_1 f_{\delta_4} (1 - h_2) z_1
 = 0,
\end{equation}
proving the claim.
This is hypothesis~(\ref{26642}) of Lemma~\ref{bStr1}.
Clearly $0 \leq z_1 \leq 1$
and $0 \leq z_3 \leq 1$,
so we get $z_2 \geq 0$.
Thus hypothesis~(\ref{26640}) of Lemma~\ref{bStr1} holds.

Combining (\ref{Eq_4925NStar}) with~(\ref{B787})
and the definition of~$z_1$,
we get $r_0 z_1 = z_1 r_0 = z_1$.
So
\[
z_1 a_3 u = z_1 (1 - r_0) a_2 (1 - r_0) u = 0,
\]
which is hypothesis~(\ref{26643}) of Lemma~\ref{bStr1}.
Clearly $u z_3 = z_1 u$,
so also
\[
a_3 u z_3
 = a_3 z_1u
 = (1 - r_0) a_2 (1 - r_0) z_1 u
 = 0,
\]
which is hypothesis~(\ref{26644}) of Lemma~\ref{bStr1}.

It remains only to verify
hypothesis~(\ref{26645}) of Lemma~\ref{bStr1}.
Recall from~(\ref{Eq_4926_h2B}) that $h_2 a_3 \in B$.
Since $\dt_4 < \frac{1}{2}$,
the function
$\ld \mapsto 1 - f_{\delta_4} (1 - \ld)$,
from $[0, 1]$ to $[0, 1]$,
vanishes at zero.
So Corollary~\ref{C_4926_InSub} implies
$(1 - z_3) a_3 = [ 1 - f_{\delta_4} (1 - h_2) ] a_3 \in B$.
{}From~(\ref{Eq_4926_zero}) and $z_1, u, z_2 \in B$,
we now get
$z_2 a_3 u z_2 = (1- z_1) (1 - z_3) a_3 u z_2 \in B$,
as desired.

We can now apply Lemma~\ref{bStr1}
to find an invertible element $a_4 \in A$
such that $\| a_4 - a_3 u \| < \tfrac{\ep}{2}$.
Then $z = a_4 u^* v$ is invertible and satisfies
\[
\| z - a \|
  \leq \| a_4 - a_3 u \| \cdot \| u^* \| \cdot \| v \|
         + \| a_3 v - a \|
  < \tfrac{\ep}{2} + \tfrac{\ep}{2}
  = \ep.
\]
This completes the proof.
\end{proof} 

Combining Theorem~\ref{tsr1GoesUpExtraLarge}
with results of~\cite{CEI} and~\cite{Ph40},
we can show that if, in addition, the centrally large subalgebra
has real rank zero,
then so does the containing algebra.
It is possible that one does not need the subalgebra
to have stable rank one,
but the proof would need to be much longer.

\begin{thm}\label{T_4Y11_RRZ}
Suppose $A$ is an infinite dimensional simple separable
unital \ca.
Suppose $A$ has a \rokhLg{} subalgebra~$B$
such that $\tsr (B) = 1$ and $\RR (B) = 0$.
Then $\RR (A) = 0$.
\end{thm}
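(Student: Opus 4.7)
The plan is to leverage Theorem~\ref{tsr1GoesUpExtraLarge}, which gives $\tsr(A) = 1$, and then transfer real rank zero from $B$ to $A$ via the Cuntz semigroup. Since $\tsr(A) = 1$, the C*-algebra $A$ is stably finite, and by Corollary~\ref{C_4920_StFinStZLg} the subalgebra $B$ is stably centrally large in~$A$, so Theorem~6.8 of~\cite{Ph40} is available: the inclusion $B \hookrightarrow A$ induces an order isomorphism between the subsemigroups of $\Cu(B)$ and $\Cu(A)$ obtained by deleting all nonzero projection classes.

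Corollary~5 of~\cite{CEI} provides the bridge: for a simple unital stably finite \ca{} of stable rank one, real rank zero is equivalent to the density of projection classes in the Cuntz semigroup (every element is the supremum of Cuntz classes of projections way below it). Since $B$ has $\tsr(B) = 1$ and $\RR(B) = 0$, this density holds in $\Cu(B)$. I would use this to deduce the analogous density in $\Cu(A)$, and then apply the corollary in the other direction to conclude $\RR(A) = 0$.

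Concretely, given $a \in (K \otimes A)_+$ and a compact class $\alpha \ll [a]$ in $\Cu(A)$, I want to find a projection $p \in K \otimes A$ with $\alpha \leq [p] \leq [a]$. If $[a]$ is itself the class of a projection in $K \otimes A$, there is nothing to do. Otherwise $[a]$ lies in the non-projection part of $\Cu(A)$, so by Theorem~6.8 of~\cite{Ph40} there exists $b \in (K \otimes B)_+$ whose non-projection class in $\Cu(B)$ corresponds to $[a]$ in $\Cu(A)$, and the Cuntz comparison relations between $b$ and elements of $K \otimes B$ are preserved by the inclusion. Applying the density in $\Cu(B)$, I obtain a projection $q \in K \otimes B$ with the required comparison relative to $b$; since $q$ is also a projection in $K \otimes A$ and the comparison relations transfer, $q$ serves as the desired~$p$.

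The main obstacle is matching projection classes across the inclusion: Theorem~6.8 of~\cite{Ph40} only asserts an isomorphism after deleting projection classes, so one must take care to place the given $[(a-\ep)_+]$-type element on the ``soft'' side and then use that projections of $B$ are already projections of $A$ (so the transfer of the comparison $q \precsim b$ in $B$ to $q \precsim a$ in $A$ is automatic once the non-projection part of the identification is in place). A secondary technicality is verifying that the CEI characterization applies in the form stated to both $B$ and $A$; here the fact that both algebras are simple, separable, unital, stably finite, and of stable rank one is exactly what is needed. Once these pieces are assembled, the conclusion $\RR(A) = 0$ follows without any further appeal to the centrally large structure beyond what is already encoded in Theorem~\ref{tsr1GoesUpExtraLarge} and Theorem~6.8 of~\cite{Ph40}.
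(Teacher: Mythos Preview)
Your approach is essentially the paper's: obtain $\tsr(A)=1$ from Theorem~\ref{tsr1GoesUpExtraLarge}, invoke the Cuntz-semigroup transfer from~\cite{Ph40}, and apply Corollary~5 of~\cite{CEI} in both directions. Two minor differences: the paper cites Theorem~6.6 of~\cite{Ph40} (surjectivity onto non-projection classes) rather than Theorem~6.8, and it works with the formulation ``every element of $\Cu$ is the supremum of an increasing sequence of compact elements'' rather than ``between every $\alpha\ll[a]$ and $[a]$ there is a projection class.'' The paper's phrasing is slightly cleaner for the transfer step: given non-projection $\eta\in\Cu(A)$, lift to $\mu\in\Cu(B)$, write $\mu=\sup\mu_n$ with $\mu_n$ compact in $\Cu(B)$, and push forward; since $\tsr(B)=1$ the $\mu_n$ are projection classes, hence remain compact in $\Cu(A)$. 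Your version leaves the inequality $\alpha\le[q]$ a bit implicit---you never say how $\alpha\in\Cu(A)$ interacts with data pulled back to~$B$---but this is exactly what the ``sup of compacts'' formulation handles automatically (once $[a]=\sup[q_n]$ with $[q_n]$ projection classes, $\alpha\ll[a]$ forces $\alpha\le[q_n]$ for some~$n$).
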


\begin{proof}
We use the Cuntz semigroups $\Cu (B)$ and $\Cu (A)$.
We refer to \cite{APT} and Section~1 of~\cite{Ph40} for background.
In particular, recall the compact containment relation $\ll$,
Definition 1.24 and Lemma 1.25 of~\cite{Ph40}
or (under the name ``way below'')
condition~{\textbf{(O4)}} in Definition~4.1 of~\cite{APT}.
If $D$ is a \ca,
we further call an element $\et \in \Cu (D)$
compact if $\et \ll \et$.

Since $\RR (B) = 0$ and $\tsr (B) = 1$,
it follows from the second paragraph of Corollary~5
on page 186 of~\cite{CEI}
that every element of $\Cu (B)$ is the supremum
of a nondecreasing sequence of compact elements.
We claim that the same is true of~$A$.
So let $\et \in \Cu (A)$.
If $\et$ is the class of a \pj{} in $K \otimes A$,
then $\et$ is itself compact,
by Lemma 1.25(3) of~\cite{Ph40}.
So the claim is immediate.
Otherwise,
let $\io \colon B \to A$ be the inclusion map.
It follows from Theorem~\ref{tsr1GoesUpExtraLarge}
that $\tsr (A) = 1$,
so $A$ is stably finite
by Proposition~3.1 and Theorem~3.3 of~\cite{Rf}.
Since centrally large algebras are large,
Corollary~\ref{C_4920_StFinStZLg}
now implies that $B$ is stably large in~$A$.
So Theorem~6.6 of~\cite{Ph40}
provides $\mu \in \Cu (B)$ such that $\io_* (\mu) = \et$.
Let $(\mu_n)_{n \in \N}$
be a nondecreasing sequence of compact elements in $\Cu (B)$
such that $\mu = \sup_{n \in \N} \mu_n$.
Then $(\io_* (\mu_n) )_{n \in \N}$
is a nondecreasing sequence of compact elements in $\Cu (A)$
such that $\et = \sup_{n \in \N} \io_* (\mu_n)$.
This proves the claim.

We saw already that $\tsr (A) = 1$.
Combining this fact, the claim,
and Corollary~5 on page 186 of~\cite{CEI},
we get $\RR (A) = 0$.
\end{proof}


\section{Application to the stable rank of crossed
 products}\label{Sec_TsrCp}

In this section,
we apply our main result to show that the crossed product
by a minimal homeomorphism of a compact metric space~$X$
has stable rank one whenever
there is a continuous surjective map from $X$ to the Cantor set.
We conjecture that the result holds whenever $X$ is infinite.
Our theorem covers examples in which it is known that
the crossed product is not $Z$-stable,
and in particular is not classifiable
in terms of K-theory and traces.
In particular,
our result can't be proved by using the fact
that simple unital $Z$-stable \ca{s} have stable rank one
(Theorem~6.7 of~\cite{R4}).

\begin{thm}\label{T_5413_TsrCP}
Let $X$ be a compact metric space.
Assume that
there is a continuous surjective map from $X$ to the Cantor set.
Let $h \colon X \to X$ be a minimal homeomorphism.
Then $C^* (\Z, X, h)$ has stable rank one.
\end{thm}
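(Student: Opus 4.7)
The plan is to produce a centrally large subalgebra $B \subset A := C^*(\Z, X, h)$ with $\tsr(B) = 1$ and then invoke Theorem~\ref{tsr1GoesUpExtraLarge}. Because $X$ surjects onto the Cantor set, $X$ is infinite, and minimality of $h$ forces $A$ to be infinite dimensional, simple, and unital. An $h$-invariant Borel probability measure (provided, e.g., by Krylov--Bogolyubov) gives a tracial state on $A$, hence stable finiteness---exactly what Theorem~\ref{T-2717CptImpRq} requires.

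Let $\pi \colon X \to K$ denote the given continuous surjection onto the Cantor set. The first task is to construct a closed set $Y \subset X$ such that (i) $h^n(Y) \cap Y = \varnothing$ for every $n \in \Z \setminus \{0\}$, so that $Y$ meets each orbit of $h$ at most once, and (ii) $Y = \bigcap_{n=1}^\infty Y_n$ with each $Y_n$ a compact open subset of $X$. Fixing a decreasing sequence $U_n$ of clopen neighborhoods in $K$ whose intersection is a single point $\kappa$, the pullbacks $Y_n := \pi^{-1}(U_n)$ satisfy (ii) with $Y := \pi^{-1}(\kappa)$. To achieve~(i), I would choose $\kappa$ so that the fiber $\pi^{-1}(\kappa)$ is orbit-free in the sense above; this is possible via a Baire category argument on $K$, since for each $n \neq 0$ the set $\{\kappa \in K : \pi^{-1}(\kappa) \cap h^n(\pi^{-1}(\kappa)) \neq \varnothing\}$ is the $\pi$-image of a closed subset of $X$, hence closed in $K$, and minimality of $h$ prevents any of these closed sets from containing a clopen neighborhood.

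With $Y$ in hand, set $B := C^*(\Z, X, h)_Y$. Theorem~7.10 of~\cite{Ph40} then shows $B$ is large in $A$; in fact it is a large subalgebra of crossed product type (Definition~\ref{D-2717CPType}) with $C = C(X)$ and $G$ generated by the standard unitary~$u$. Since $A$ is stably finite, Theorem~\ref{T-2717CptImpRq} upgrades $B$ to a centrally large subalgebra of~$A$. To see $\tsr(B) = 1$: because each $Y_n$ is compact open, the Rokhlin-tower decomposition associated to the first return map of $h$ to $X \setminus Y_n$ realizes $C^*(\Z, X, h)_{Y_n}$ as a finite direct sum of homogeneous building blocks, and the inclusions $C^*(\Z, X, h)_{Y_n} \hookrightarrow C^*(\Z, X, h)_{Y_{n+1}}$ are diagonal in the sense of~\cite{EHT}. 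Thus $B = \overline{\bigcup_n C^*(\Z, X, h)_{Y_n}}$ is a simple direct limit of an AH system with diagonal maps, and has stable rank one by the main result of~\cite{EHT}. Theorem~\ref{tsr1GoesUpExtraLarge} then delivers $\tsr(A) = 1$.

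The main obstacle is the construction of $Y$: the two requirements---small enough to meet each orbit at most once, yet obtainable as a decreasing intersection of clopens of~$X$---pull in opposite directions, and it is precisely the continuous surjection onto the Cantor set that reconciles them. A secondary technical point is the assertion that the connecting inclusions $C^*(\Z, X, h)_{Y_n} \hookrightarrow C^*(\Z, X, h)_{Y_{n+1}}$ really are diagonal AH maps; this is a routine but nontrivial unwinding of the Rokhlin tower structure attached to each clopen $Y_n$, and is precisely the content of the known AH description cited from~\cite{EHT}.
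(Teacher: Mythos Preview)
Your overall strategy matches the paper's exactly: produce a closed set $Y$ meeting each orbit at most once and expressible as a decreasing intersection of clopens, verify that $C^*(\Z,X,h)_Y$ is a simple AH limit with diagonal maps (hence has stable rank one by~\cite{EHT}), upgrade ``large of crossed product type'' to ``centrally large'' via Theorem~\ref{T-2717CptImpRq}, and finish with Theorem~\ref{tsr1GoesUpExtraLarge}. The paper simply cites~\cite{HPT} for the construction of~$Y$ and for the diagonal AH structure, whereas you attempt to do both by hand.

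The gap is in your construction of~$Y$. Your Baire category argument requires that for each $n\neq 0$ the set
\[
F_n = \{\kappa\in K : \pi^{-1}(\kappa)\cap h^n(\pi^{-1}(\kappa))\neq\varnothing\}
\]
be nowhere dense in~$K$, and you assert that ``minimality of $h$ prevents any of these closed sets from containing a clopen neighborhood.'' This is false for a general continuous surjection~$\pi$. Take $X=K=\{0,1\}^{\N}$ with $h$ the dyadic odometer, and let $\pi$ be the one-sided shift $\pi(x_1,x_2,\ldots)=(x_2,x_3,\ldots)$. Then $\pi(h(x))=\pi(x)$ exactly when $x_1=0$, so $E_1=\{x:x_1=0\}$ and $F_1=\pi(E_1)=K$. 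Thus $F_1$ is all of~$K$, and no fiber $\pi^{-1}(\kappa)$ satisfies your condition for $n=1$. Minimality of $h$ only rules out $E_n=X$ (since that would force $\pi(X)$ to be finite), not that $\pi(E_n)$ has empty interior.

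What actually makes the argument work is that the mere existence of a continuous surjection onto the Cantor set can be upgraded to a genuine \emph{factor map} $\pi':(X,h)\to(K',\sigma)$ onto a Cantor minimal system; this equivalence is Proposition~5.1 of~\cite{HPT} and is what the paper cites. Once $\pi'$ intertwines the dynamics, $h^n(\pi'^{-1}(\kappa))=\pi'^{-1}(\sigma^n(\kappa))$ is disjoint from $\pi'^{-1}(\kappa)$ automatically for every $\kappa$ and every $n\neq 0$, since $\sigma$ has no periodic points. Your sketch would be repaired by inserting this step. The verification that the connecting maps between the $C^*(\Z,X,h)_{Y_n}$ are diagonal in the sense of~\cite{EHT} is likewise carried out in~\cite{HPT} (Lemma~5.12 there), not in the present paper.
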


In Proposition~5.1 of~\cite{HPT},
it is shown that the conditions on the pair $(X, h)$
are equivalent to a number of other conditions.
For example,
the hypotheses are equivalent to the assumption
that $(X, h)$ has a factor system
which is a \hme{} of the Cantor set.
They are also equivalent to the assumption
that there is a decreasing sequence
$Y_0 \supset Y_1 \supset \cdots$
of nonempty compact open subsets of~$X$ such that the subset
$Y = \bigcap_{n = 0}^{\infty} Y_n$
satisfies
$h^r (Y) \cap Y = \varnothing$ for all $r \in \Z \setminus \{ 0 \}$.

The proof depends on a result in~\cite{EHT},
according to which simple direct limits of AH~systems with diagonal maps
have stable rank one,
regardless of any conditions on dimension growth.
It seems plausible to believe that the same should be true
for simple direct limits of recursive subhomogeneous algebras
in which the maps of the system are diagonal.
For any \mh~$h$ of an infinite compact metric space,
and any nonempty closed subset $Y \subset X$
such that
$h^r (Y) \cap Y = \varnothing$ for all $r \in \Z \setminus \{ 0 \}$,
the centrally large subalgebra $C^* (\Z, X, h)_Y$
(see the introduction for the notation)
is the direct limit of such a system.
(This will appear in~\cite{Ph_md}.)
Accordingly,
we make the following conjecture.

\begin{cnj}[joint with Zhuang Niu]\label{Cj_4Z05_TsrMH}
Let $X$ be an infinite \cms{}
and let $h \colon X \to X$ be a \mh.
Then $C^* (\Z, X, h)$ {\emph{always}} has stable rank one.
\end{cnj}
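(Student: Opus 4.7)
The conjecture is open; the plan, foreshadowed in the paragraph preceding the statement, is to exhibit a \Rqls{} of $C^* (\Z, X, h)$ with stable rank one and then invoke Theorem~\ref{tsr1GoesUpExtraLarge}. Pick any $y \in X$ and let $Y = \{ y \}$. Since $h$ is a \mh{} of an infinite \cms, every orbit is infinite, so $h^r (Y) \cap Y = \varnothing$ for all $r \in \Z \SM \{ 0 \}$. Let $B = C^* (\Z, X, h)_Y$ be the corresponding orbit breaking subalgebra. By Theorem~7.10 of~\cite{Ph40}, $B$ is a large subalgebra of $C^* (\Z, X, h)$, and inspection of Putnam's construction shows that the cutting function $g$ in the definition of largeness can be taken in $C (X)$. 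Thus $B$ is in fact a large subalgebra of crossed product type (Definition~\ref{D-2717CPType}) with $C = C (X)$ and $G = \{ u \}$. The \cp{} $C^* (\Z, X, h)$ is simple (minimality of $h$) and stably finite (any $h$-invariant Borel probability measure, provided by Krylov--Bogolyubov, induces a faithful tracial state). Theorem~\ref{T-2717CptImpRq} then applies and $B$ is \rokhLg{} in $C^* (\Z, X, h)$.

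By Theorem~\ref{tsr1GoesUpExtraLarge} it suffices to show $\tsr (B) = 1$. Here is the main obstacle and the only place where new ideas are needed. For the special case covered by Theorem~\ref{T_5413_TsrCP}, in which $Y$ can be chosen as a decreasing intersection of clopen sets, $B$ is a direct limit of an AH system with diagonal maps in the sense of~\cite{EHT}, and stable rank one follows from the Elliott--Ho--Toms theorem. For a completely general~$(X, h)$, the paragraph preceding the conjecture announces (with reference to~\cite{Ph_md}) that $B$ is still the direct limit of a system $(B_n, \ph_{n, n+1})$, but with each $B_n$ a \rsha{} rather than an AH~algebra, the connecting maps still being diagonal. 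Granting this description, the conjecture reduces to the following extension of~\cite{EHT}: a simple unital direct limit of \rsha{s} with diagonal connecting maps has stable rank one.

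The hard part is proving this extension. A natural approach is to imitate the~\cite{EHT} argument fibrewise along the \rshd{} of each~$B_n$: given $b \in B_n$ and $\ep > 0$, use simplicity of $B$ (which forces the diagonal components of $\ph_{n, m}$ to spread any single summand over many base-space points) together with diagonality to approximate $\ph_{n, m} (b)$ by an invertible in $B_m$ for sufficiently large~$m$. The obstruction absent from the AH case is that the base spaces of the \rsha{s} can have arbitrarily large covering dimension and rather general topology, so one cannot appeal to a fibrewise dimension argument; instead, the invertible-approximation step must be controlled purely by Cuntz subequivalence and by decompositions of the identity of the type isolated in Lemma~\ref{bStr1} and Lemma~\ref{bcStr1}. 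Once this \rsha{} analogue of the Elliott--Ho--Toms result is in hand, the conjecture follows immediately from the machinery of the present paper.
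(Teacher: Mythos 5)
The statement you are addressing is presented in the paper as a \emph{conjecture}, not a theorem: the paper contains no proof of it, only the special case Theorem~\ref{T_5413_TsrCP} (when $X$ surjects onto the Cantor set) together with the heuristic reduction sketched in the paragraph immediately preceding the conjecture. Your proposal reproduces that reduction faithfully and correctly: taking $Y = \{ y \}$, minimality on an infinite space gives $h^r (Y) \cap Y = \varnothing$ for $r \neq 0$; Theorem~7.10 of~\cite{Ph40} makes $B = C^* (\Z, X, h)_Y$ a large subalgebra of crossed product type; simplicity plus an invariant measure (hence a faithful tracial state) gives stable finiteness, so Theorem~\ref{T-2717CptImpRq} makes $B$ centrally large; and Theorem~\ref{tsr1GoesUpExtraLarge} would then reduce everything to $\tsr (B) = 1$. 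All of this is sound, but it is exactly the program the authors themselves lay out, and it leaves the conjecture equivalent to the unproven assertion that $\tsr (B) = 1$ for arbitrary $(X, h)$.

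The genuine gap is the step you yourself label ``the hard part,'' and your sketch of it does not constitute an argument. First, the needed extension of the Elliott--Ho--Toms theorem~\cite{EHT} from AH systems to simple direct limits of \rsha{s} with diagonal maps is an open problem, and no proof strategy in your paragraph actually engages its difficulties. Second, Lemmas~\ref{bStr1} and~\ref{bcStr1} cannot play the role you assign them: they approximate an element of a \emph{finite} algebra $A$ by invertibles only \emph{given} a unital subalgebra $B \subset A$ with $\tsr (B) = 1$ and a suitable decomposition of the identity inside that subalgebra. They are tools for transferring stable rank one across a centrally large inclusion, not for establishing it in a direct limit; invoking them inside the building blocks $B_m$ would require already knowing where a stable-rank-one subalgebra of each $B_m$ comes from, which is circular, since \rsha{s} in general do not have stable rank one and their base spaces have unbounded dimension --- precisely the obstruction you note. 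Third, even the structural input that $C^* (\Z, X, h)_Y$ is such a direct limit for general $(X, h)$ rests only on the in-preparation reference~\cite{Ph_md}. In sum, you have given a correct and well-organized restatement of the open problem and of the paper's own intended route to it, but not a proof; the conjecture remains open after your argument exactly where it was open before it.
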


Since simple unital AH~algebras can have arbitrary
stable rank
(even infinite stable rank;
see Theorems 8 and~12 of~\cite{Vl}),
this would show that some kinds of pathology
which occur for simple unital AH~algebras
are not possible with crossed products by \mh{s}.

\begin{proof}[Proof of Theorem~\ref{T_5413_TsrCP}]
Proposition 5.14 of~\cite{HPT}
provides a nonempty compact subset $Y \subset X$
such that
$h^n (Y) \cap Y = \varnothing$ for all $n \in \Z \setminus \{ 0 \}$
and such that $C^* (\Z, X, h)_Y$
(as defined in the introduction)
is the direct limit of an AH~system
with diagonal maps in the sense described at the
beginning of Section~2.2 of~\cite{EHT}.
(The maps are explicitly described in Lemma 5.12 of~\cite{HPT}.)
By Theorem~7.10 of~\cite{Ph40},
the subalgebra
$C^* (\Z, X, h)_Y$ is a large subalgebra of $C^* (\Z, X, h)$
of crossed product type in the sense of
Definition~4.9 of~\cite{Ph40}.
In particular,
$C^* (\Z, X, h)_Y$ is simple
(by Proposition~5.2 of~\cite{Ph40})
and is centrally large in~$C^* (\Z, X, h)$
(by Theorem~\ref{T-2717CptImpRq}).
Using simplicity and the diagonal maps condition
in the AH~system,
it follows from Theorem~4.1 of~\cite{EHT}
that $C^* (\Z, X, h)_Y$ has stable rank one.
So $C^* (\Z, X, h)$ has stable rank one
by Theorem~\ref{tsr1GoesUpExtraLarge}.
\end{proof}

\begin{cor}\label{C_5413_GK}
Let $Y$ be a compact metric space with more than one point,
let $d \in (0, 1)$,
and let $X$ and the \mh{} $h \colon X \to X$ be a
subshift
(of the shift on $(Y \times [0, 1])^{\Z}$)
as constructed in Section~2 of~\cite{GK}.
Then $C^* (\Z, X, h)$ has stable rank one.
\end{cor}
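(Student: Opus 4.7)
The plan is to apply Theorem~\ref{T_5413_TsrCP} directly, so that all the work reduces to verifying that the Giol--Kerr system $(X,h)$ admits a continuous surjection onto the Cantor set. The idea is to extract from the construction of Section~2 of~\cite{GK} a natural odometer factor.

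First, I would recall the structure of the construction. The minimal subshift $X \subset (Y \times [0,1])^{\Z}$ is built by a telescoping procedure in which, at the $n$-th stage, one forms blocks of length $p_n$ by concatenating a prescribed number of copies of blocks from the previous stage. This yields an increasing sequence of periods $p_0 \mid p_1 \mid p_2 \mid \cdots$ with $p_n \to \infty$. By construction, each point of $X$ lies in a unique block at each stage, so it has a well-defined residue class in $\Z/p_n\Z$, and these residues are compatible under the quotient maps $\Z/p_{n+1}\Z \to \Z/p_n\Z$.

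Passing to the inverse limit produces a continuous equivariant factor map $\pi \colon X \to Z$, where $Z = \varprojlim \Z/p_n\Z$ carries the odometer homeomorphism. I would then verify that $\pi$ is surjective and that $Z$ is a Cantor set. Surjectivity follows from the minimality of the odometer on $Z$: the image $\pi(X)$ is a nonempty closed invariant subset of~$Z$, so it is all of~$Z$. Since the $p_n$ strictly increase to infinity, $Z$ is a compact metrizable totally disconnected space with no isolated points, hence homeomorphic to the Cantor set.

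With this continuous surjection $X \to Z$ in hand, the hypotheses of Theorem~\ref{T_5413_TsrCP} are satisfied for $(X,h)$, and that theorem immediately yields $\tsr(C^*(\Z, X, h)) = 1$. The only substantive step is extracting the odometer factor from the Giol--Kerr construction, which is essentially built in by design; this is bookkeeping rather than a genuine obstacle, and everything else is an invocation of the main theorem of the section.
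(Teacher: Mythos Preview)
Your proposal is correct and follows exactly the same strategy as the paper: exhibit an odometer factor of $(X,h)$ to obtain a continuous surjection from $X$ onto the Cantor set, and then invoke Theorem~\ref{T_5413_TsrCP}. The paper simply cites~\cite{GK} (bottom of page~111) for the existence of this odometer factor rather than sketching its construction, but the argument is the same in substance.
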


\begin{proof}
On the bottom of page 111 of~\cite{GK},
it is proved that the system $(X, h)$
has the universal odometer as a factor system.
(The hypothesis that $Y$ be contractible,
which is being assumed at this point,
is not needed for this conclusion.)
In particular,
there is a continuous surjective map from $X$ to the Cantor set.
So Theorem~\ref{T_5413_TsrCP} applies.
\end{proof}

\begin{exa}\label{Ex_5413_GKrc}
Let $n \in \N$ satisfy $n \geq 2$.
In Corollary~\ref{C_5413_GK},
make the particular choices
$d \in \big( 1 - \frac{1}{n}, \, 1 \big)$
and $Y = [0, 1]^{3 n}$.
Theorem~2.2 of~\cite{GK}
implies that
the radius of comparison of $C^* (\Z, X, h)$
satisfies
${\operatorname{rc}} ( C^* (\Z, X, h)) \geq n - 1$.
In particular,
$C^* (\Z, X, h)$ does not have strict comparison,
so is not $Z$-stable
(by Corollary~4.6 of~\cite{R4}),
but has stable rank one by Corollary~\ref{C_5413_GK}.
\end{exa}

The {\emph{mean dimension}} ${\operatorname{mdim}} (h)$
of a \hme{} $h$ of a \cms{}
is defined in Definition~2.6 of~\cite{LW}.
It is shown in~\cite{HPT}
that for the \mh{s} in
Corollary~\ref{C_5413_GK}
one actually has
${\operatorname{rc}} ( C^* (\Z, X, h))
 = \frac{1}{2} {\operatorname{mdim}} (h)$.
It is pointed out after
Proposition~2.4 of~\cite{GK}
that if the covering dimension
of the space $Y$ in Corollary~\ref{C_5413_GK}
satisfies $\dim (Y) > 0$,
then the \hme{} $h$ of Corollary~\ref{C_5413_GK}
satisfies ${\operatorname{mdim}} (h) > 0$.
Thus,
in fact,
whenever $\dim (Y) > 0$ in Corollary~\ref{C_5413_GK},
then ${\operatorname{rc}} ( C^* (\Z, X, h)) > 0$,
and $C^* (\Z, X, h)$ is not $Z$-stable
(but does have stable rank one).

\begin{exa}\label{Ex_5413_GKPerf}
In Corollary~\ref{C_5413_GK},
make the particular choices
$d \in (0, 1)$
and $Y = S^2 \times S^2$.
Theorem~3.1 of~\cite{GK}
implies that $K_0 (C^* (\Z, X, h))$
is not weakly unperforated.
We can now give a different argument to show that
$C^* (\Z, X, h)$ is not $Z$-stable.
By Corollary~4.9 of~\cite{R4},
if $C^* (\Z, X, h)$ were $Z$-stable
then (using simplicity of $C^* (\Z, X, h)$
to get simplicity of $V (C^* (\Z, X, h))$)
it would follow that $K_0 (C^* (\Z, X, h))$
is almost unperforated.
By the discussion after Lemma~3.4 of~\cite{R4},
it would follow that $K_0 (C^* (\Z, X, h))$ is weakly unperforated,
a contradiction.
Again,
$C^* (\Z, X, h)$ has stable rank one by Corollary~\ref{C_5413_GK}.
\end{exa}

There are other ways to construct examples of \mh{s}
for which we can prove that the mean dimension is nonzero
and that the crossed products have stable rank one,
although we do not know that the crossed products
don't have strict comparison,
and we do not know that they are not $Z$-stable.

\begin{exa}\label{Ex_5413_Prod}
Let $X_0$ be any connected compact metric space,
and let $h_0 \colon X_0 \to X_0$ be a minimal homeomorphism
such that ${\operatorname{mdim}} (h_0) > 0$.
Let $k \colon Z \to Z$ be an odometer homeomorphism
of the Cantor set.
(See, for example, Section VIII.4 of~\cite{Dv}.)
Set $X = Z \times X_0$ and define $h = k \times h_0 \colon X \to X$.
It is shown in~\cite{HPT}
that $h$ is minimal.
It is also shown in~\cite{HPT}
that ${\operatorname{mdim}} (h) = {\operatorname{mdim}} (h_0)$.
Obviously $X$ has a continuous surjective map to the Cantor set,
so $C^* (\Z, X, h)$ has stable rank one
by Theorem~\ref{tsr1GoesUpExtraLarge}.
\end{exa}


\renewcommand{\bibname}{\textsc{REFERENCES}}

\begin{thebibliography}{10}

\bibitem{ArPh2} D.~Archey and N.~C.\  Phillips,
{\emph{Centrally large subalgebras and $Z$-stability}},
in preparation.

\bibitem{APT} P.~Ara, F.~Perera, and A.~S.\  Toms,
{\emph{K-Theory for operator algebras. Classification of C*-algebras}},
pages 1--71 in:
{\emph{Aspects of Operator Algebras and Applications}},
P.~Ara, F~Lled\'{o}, and F.~Perera (eds.),
Contemporary Mathematics vol.~534,
Amer.\  Math.\  Soc., Providence RI, 2011.

\bibitem{CEI} K.~T.\  Coward, G.~A.\  Elliott, and C.~Ivanescu,
{\emph{The Cuntz semigroup as an invariant for C*-algebras}},
J.~reine angew.\  Math.\  {\textbf{623}}(2008), 161--193.

\bibitem{Cz}
J.~Cuntz,
{\emph{The structure of multiplication and addition in
simple C*-algebras}},
Math.\  Scand.\  {\textbf{40}}(1977),
215--233.

\bibitem{Cz1}
J.~Cuntz,
{\emph{Dimension functions on simple C*-algebras}},
Math.\  Ann.\  {\textbf{233}}(1978),
145--153.

\bibitem{Dv} K.~R.\  Davidson,
{\emph{C*-Algebras by Example}},
Fields Institute Monographs no.~6,
Amer.\  Math.\  Soc., Providence RI, 1996.

\bibitem{EHT}
G.~A.\  Elliott, T.~M.\  Ho, and A.~S.\  Toms,
{\emph{A class of simple C*-algebras with stable rank one}},
J.~Funct.\  Anal.\  {\textbf{256}}(2009), 307--322.

\bibitem{EN1} G.~A.\  Elliott and Z.~Niu,
{\emph{All irrational extended rotation algebras are AF~algebras}},
preprint.

\bibitem{EN2} G.~A.\  Elliott and Z.~Niu,
{\emph{C*-algebra of a minimal homeomorphism of zero mean dimension}},
preprint (arXiv:1406.2382v2 [math.OA]).

\bibitem{GK} J.~Giol and D.~Kerr,
{\emph{Subshifts and perforation}},
J.~reine angew.\  Math.\  {\textbf{639}}(2010), 107--119.

\bibitem{HPT} T.~Hines, N.~C.\  Phillips, and A.~S.\  Toms,
{\emph{Mean dimension and radius of comparison for minimal
 homeomorphisms with Cantor factors}},
in preparation.

\bibitem{KR}
E.~Kirchberg and M.~R{\o}rdam,
{\emph{Non-simple purely infinite C*-algebras}},
Amer.\  J.\  Math.\  {\textbf{122}}(2000),
637--666.

\bibitem{LnBook} H.~Lin,
{\emph{An Introduction to the Classification of Amenable
C*-Algebras}},
World Scientific, River Edge~NJ, 2001.

\bibitem{LW} E.~Lindenstrauss and B.~Weiss,
{\emph{Mean topological dimension}},
Israel J.\  Math.\  {\textbf{115}}(2000), 1--24.

\bibitem{Ph10}  N.~C.\  Phillips,
{\emph{Crossed products of the Cantor set by free minimal
actions of ${\mathbb{Z}}^d$}},
Commun.\  Math.\  Phys.\  {\textbf{256}}(2005),
1--42.

\bibitem{Ph40}
N.~C.\  Phillips,
{\emph{Large subalgebras}},
preprint (arXiv: 1408.5546v1 [math.OA]).

\bibitem{Ph41}
N.~C.\  Phillips,
{\emph{The transformation group C*-algebras of free minimal actions of
${\mathbb{Z}}^d$ on finite dimensional compact metric spaces (draft)}}.

\bibitem{PhZd}
N.~C.\  Phillips,
{\emph{Stable rank one for free minimal actions of ${\mathbb{Z}}^d$
  with Cantor factors}},
in preparation.

\bibitem{Ph_md}
N.~C.\  Phillips,
in preparation.

\bibitem{Pt1} I.~F.\  Putnam,
{\emph{The C*-algebras associated with minimal
homeomorphisms of the Cantor set}},
Pacific J.\  Math.\  {\textbf{136}}(1989), 329--353.

\bibitem{Pt4} I.~F.\  Putnam,
{\emph{On the K-theory of C*-algebras of principal groupoids}},
Rocky Mountain J.\  Math.\  {\textbf{28}}(1998), 1483--1518.

\bibitem{Rf} M.~A.\  Rieffel,
{\emph{Dimension and stable rank in the K-theory of C*-algebras}},
Proc.\  London Math.\  Soc.~(3) {\textbf{46}}(1983), 301--333.

\bibitem{Ro}
M.~R{\o}rdam,
{\emph{On the structure of simple C*-algebras tensored with
   a UHF-algebra}},
J.~Funct.\  Anal.\  {\textbf{100}}(1991),
1--17.

\bibitem{Ro2}
M.~R{\o}rdam,
{\emph{On the structure of simple C*-algebras tensored with
   a UHF-algebra. II}},
J.~Funct.\  Anal.\  {\textbf{107}}(1992),
255--269.

\bibitem{R4} M.~R{\o}rdam,
{\emph{The stable and the real rank of ${\mathcal{Z}}$-absorbing
C*-algebras}},
International J.\  Math.\  {\textbf{15}}(2004), 1065--1084.

\bibitem{RW} M.~R{\o}rdam and W.~Winter,
{\emph{The Jiang-Su algebra revisited}},
J.~reine angew.\  Math.\  {\textbf{642}}(2010), 129--155.

\bibitem{Szb} G.~Szab\'{o},
{\emph{The Rokhlin dimension of topological ${\mathbb{Z}}^m$-actions}},
preprint (arXiv: 1308.5418v4 [math.OA]).

\bibitem{Vl} J.~Villadsen,
{\emph{On the stable rank of simple C*-algebras}},
J.~Amer.\  Math.\  Soc.\  {\textbf{12}}(1999),  1091--1102.

\end{thebibliography}


\end{document}